\documentclass[12pt]{article}
\textwidth 16.5cm
\textheight 22.5cm
\oddsidemargin 0pt
\topmargin -1cm

\usepackage{latexsym,amsmath,amssymb,amsfonts,amsthm,bbm}
\usepackage{natbib}
\usepackage{enumerate}
\RequirePackage[colorlinks,citecolor=blue,urlcolor=blue,breaklinks]{hyperref}
\usepackage{graphicx}
\usepackage{graphics,enumerate}
\usepackage[x11names,svgnames]{xcolor}

\newtheorem{theorem}{Theorem}
\newtheorem{proposition}[theorem]{Proposition}
\newtheorem{lemma}[theorem]{Lemma}

\newtheorem{defn}{Definition}

\DeclareMathOperator*{\argmax}{argmax}

\newcommand{\E}{{\mathbb E}}

\newcommand{\R}{{\mathbb R}}
\renewcommand{\P}{{\mathbb P}}

\newcommand{\G}{{\mathcal{G}}}

\newcommand{\F}{{\cal F}}

\newcommand{\scs}{{\mathcal{S}}}

\newcommand {\one}{{\mathbbm{1}}}

\newcommand{\hel}{d_\mathrm{H}^2} 
\newcommand{\shel}{d_\mathrm{H}} 
\newcommand{\kl}{d_\mathrm{KL}^2}

\newcommand{\kld}{d_\mathrm{X}^2}

\newcounter{rcnt}[section]

\def\qt#1{\qquad\text{#1}}


\begin{document}

\title{Adaptation in log-concave density estimation}

\author{Arlene K. H. Kim$^\ast$, Adityanand Guntuboyina$^\dag$ and Richard J. Samworth$^\ddag$ \\ 
 $^{\ast,\ddag}$ University of Cambridge\\ and\\ $^\dag$ University of
 California Berkeley}  
\footnotetext[2]{Research supported by NSF Grant DMS-1309356.}
\footnotetext[3]{Research supported by an EPSRC Early Career
  Fellowship and a grant from the Leverhulme Trust.} 

\maketitle

\begin{abstract}
The log-concave maximum likelihood estimator of a density on the real line based on a sample of size $n$ is known to attain the minimax optimal rate of convergence of $O(n^{-4/5})$ with respect to, e.g., squared Hellinger distance.  In this paper, we show that it also enjoys attractive adaptation properties, in the sense that it achieves a faster rate of convergence when the logarithm of the true density is $k$-affine (i.e.\ made up of $k$ affine pieces), provided $k$ is not too large.  Our results use two different techniques: the first relies on a new Marshall's inequality for log-concave density estimation, and reveals that when the true density is close to log-linear on its support, the log-concave maximum likelihood estimator can achieve the parametric rate of convergence in total variation distance.  Our second approach depends on local bracketing entropy methods, and allows us to prove a sharp oracle inequality, which implies in particular that the rate of convergence with respect to various global loss functions, including Kullback--Leibler divergence, is $O\bigl(\frac{k}{n}\log^{5/4} n\bigr)$ when the true density is log-concave and its logarithm is close to $k$-affine.  
\end{abstract}

\section{Introduction}

It is well known that nonparametric shape constraints such as monotonicity, convexity or log-concavity have the potential to offer the practitioner the best of both the nonparametric and parametric worlds: on the one hand, the infinite-dimensional classes allow considerable modelling flexibility, while on the other one can often obtain estimation procedures that do not require the choice of tuning parameters.  Examples include isotonic regression \citep{VanEeden1956,BBBB1972}, convex regression \citep{Hildreth1954,SeijoSen2011,LimGlynn2012}, generalised additive models \citep{ChenSamworth2016}, the Grenander estimator \citep{Grenander1956}, convex density estimation \citep{GJW2001}, independent component analysis \citep{SamworthYuan2012} and many others.  See \citet{GroeneboomJongbloed2014} for a recent book-length treatment of the field. 

These attractive properties have led to intensive efforts in recent years, to try to understand the theoretical properties of shape-constrained estimators.  In some cases, for instance, it is now known that these estimators can achieve minimax optimal rates of convergence; see, for example, \citet{Birge1987} for the Grenander estimator, \citet{BaraudBirge2016} for $\rho$-estimators, \citet{KimSamworth2016} for the log-concave maximum likelihood estimator and \citet{HanWellner2016} for convex regression estimators.  However, the fact that these estimators are tuning-free raises the prospect of an additional allure, namely that they might adapt to certain types of data generating mechanisms in the sense of attaining a faster rate of convergence than that predicted by the `worst-case' minimax theory. 

The purpose of this paper is to explore this adaptation phenomenon in the context of log-concave density estimation.  Recall that a density $f$ on the real line is said to be log-concave if it is of the form $\exp(\phi)$ for some concave function $\phi: \R \rightarrow [-\infty,\infty)$.  We write $\mathcal{F}$ for the set of all upper semi-continuous log-concave densities.  The class $\mathcal{F}$ serves as a particularly attractive nonparametric surrogate for the class of Gaussian densities, because it is closed under linear transformations, marginalisation, conditioning and convolution, and because it contains many commonly encountered parametric families of unimodal densities with exponentially decaying tails.  For this reason, the log-concave maximum likelihood estimator of $f$, first introduced by \citet{Walther2002}, has been studied in great detail in recent years; see, for example, \citet{PWM2007,DumbgenRufibach2009,SereginWellner2010,CuleSamworth2010,SchuhmacherDumbgen2010,CSS2010,DSS2011}.  

Very recently, \citet{KimSamworth2016} proved that if $X_1,\ldots,X_n$ are an independent sample from $f_0 \in \mathcal{F}$, then\footnote{Here, we write $a_n \asymp b_n$ to mean that $0 < \liminf_{n \rightarrow \infty} |a_n/b_n| \leq \limsup_{n \rightarrow \infty} |a_n/b_n| < \infty$.}
\begin{equation}
\label{Eq:Minimax}
\inf_{\tilde{f}_n} \sup_{f_0 \in \mathcal{F}} \mathbb{E}_{f_0}{\hel}(\tilde{f}_n,f_0) \asymp n^{-4/5},
\end{equation}
and that the log-concave maximum likelihood estimator $\hat{f}_n$ based on $X_1,\ldots,X_n$ attains this minimax optimal rate.  Here, the infimum is taken over all estimators $\tilde{f}_n$ of $f_0$, and $\hel(f,g) := \int_{-\infty}^\infty (f^{1/2} - g^{1/2})^2$ denotes the squared Hellinger distance.  In fact, there are various other choices of global loss function that one can study, including the total variation distance and Kullback--Leibler divergence, defined respectively by
\[
d_{\mathrm{TV}}(f,g) := \frac{1}{2}\int_{-\infty}^\infty |f - g|, \quad \text{and} \quad d_{\mathrm{KL}}^2(f,g) := \int_{-\infty}^\infty f \log \frac{f}{g},
\]
where we set $d_{\mathrm{KL}}^2(f,g) := \infty$ if the support of $f$ is not contained in the support of $g$.  We recall the standard inequalities $d_{\mathrm{TV}}^2(f,g) \leq \hel(f,g) \leq d_{\mathrm{KL}}^2(f,g)$ that relate these loss functions.  In fact, in this work, we will also be interested in another notion of divergence: by an application of Remark~2.3 of \citet{DSS2011} to the function $x \mapsto \log \frac{f_0(x)}{\hat{f}_n(x)}$, we have
that  
\[
d_{\mathrm{KL}}^2(\hat{f}_n,f_0) \leq \frac{1}{n}\sum_{i=1}^n \log \frac{\hat{f}_n(X_i)}{f_0(X_i)} =: d_X^2(\hat{f}_n,f_0).
\]
Thus, an upper bound on the risk of the log-concave maximum likelihood estimator in the $d_X^2$ divergence immediately yields bounds in each of the other global loss functions mentioned above.  

The log-concave maximum likelihood estimator can be expressed as
\[
\hat{f}_n(x) = \left\{ \begin{array}{ll} \exp\bigl\{\min(b_1x-\beta_1,\ldots,b_m x - \beta_m)\bigr\} &\mbox{if $x \in [\min(X_1,\ldots,X_n),\max(X_1,\ldots,X_n)]$} \\
0 & \mbox{otherwise,} \end{array} \right.
\]
for some $m \in \mathbb{N}$ and $b_1,\ldots,b_m,\beta_1,\ldots,\beta_m \in \mathbb{R}$.   This motivates the thought that if $\log f_0$ is itself composed of a relatively small number of affine pieces (e.g.\ the logarithm of a Laplace density comprises two affine pieces), then we might expect $\hat{f}_n$ to converge to $f_0$ at an especially fast rate.  

To this end, for $k \in \mathbb{N}$, we define $\mathcal{F}^k$ to be the class of log-concave densities $f$ for which $\log f$ is \emph{$k$-affine} in the sense that there exist intervals $I_1,\ldots,I_k$ such that $f$  is supported on $I_1 \cup \ldots \cup I_k$, and $\log f$ is affine on each $I_j$.  We then study adaptation in log-concave density estimation via two different approaches.  The first, presented in Section~\ref{Sec:TV}, establishes risk bounds in total variation distance for true densities that are close to $\mathcal{F}^1$, showing in some cases (such as when the true density is uniform on a compact interval), that the log-concave maximum likelihood estimator achieves the parametric rate of convergence.  Our key tool for this approach is an analogue of Marshall's inequality \citep{Marshall1970}, which we use to relate $\sup_{x \in \mathbb{R}} |\hat{F}_n(x) - F_0(x)|$ to $\sup_{x \in \mathbb{R}} |\mathbb{F}_n(x) - F_0(x)|$, where $\mathbb{F}_n$, $F_0$ and $\hat{F}_n$ denote the empirical distribution function and the distribution functions corresponding to $f_0$ and $\hat{f}_n$ respectively.   An attraction of this strategy is that the true density need not be assumed to be log-concave.

Our second approach, developed in Section~\ref{Sec:kaffine}, studies more general adaptation of the log-concave maximum likelihood estimator to densities in $\mathcal{F}^k$ via local bracketing entropy methods.  More precisely, we provide risk bounds in the $d_X^2$ divergence when the true density is log-concave and close to $\mathcal{F}^k$, which reveal that a rate of $\frac{k}{n}\log^{5/4} n$ can be attained.  Thus, when $k$ is relatively small, we obtain a significant improvement over the minimax rate.  

There has been considerable interest in adaptation in shape-constrained estimation, especially in recent years, on problems including decreasing density estimation \citep{Birge1987}, isotonic regression \citep{Zhang2002,CGS2014}, matrix estimation under shape constraints \citep{CGS2015} and convex regression \citep{ChenWellner2016,HanWellner2016}.  However, all of these works consider the least squares estimator, which has a more explicit expression as a projection onto a convex set.  The class of log-concave densities is not convex, and the maximum likelihood estimator does not have such a simple characterisation, so we have to develop new techniques.  We finally mention the work of \citet{BaraudBirge2016}, who study a procedure called a $\rho$-estimator in various shape-constrained density estimation problems.  We discuss their results in the context of log-concave density estimation in Section~\ref{Sec:kaffine}.

Proofs of our main results are given in Sections~\ref{Sec:ProofsMain} and~\ref{Sec:kaffineproofs}.  These rely on several auxiliary results, which are presented in the Appendix.  We conclude this introduction with some notation used throughout the paper.  Given a function $g:\mathbb{R} \rightarrow \mathbb{R}$, we write $\|g\|_\infty := \sup_{x \in \mathbb{R}} |g(x)|$.  For $f,g \in \F$, we write $D_f := \{x:f(x) > 0\} = \{x:\log f(x) > -\infty\}$ for the domain of $\log f$, and write $f \ll g$ if $D_f \subseteq D_g$.  Also for $f \in \mathcal{F}$, let $\mu_f := \int_{-\infty}^\infty xf(x) \, dx$, $\sigma_f^2 := \int_{-\infty}^\infty (x-\mu_f)^2f(x) \, dx$ and $\mathcal{F}^{0,1} := \{f \in \mathcal{F}:\mu_f=0,\sigma_f^2=1\}$.  We use $C$ to denote a generic universal positive constant, whose value may be different at different instances.

\section{Rates for densities that are close to log-affine on their support}
\label{Sec:TV}

This section concerns settings where the true density is close to $\mathcal{F}^1$, the class of densities that are 
log-affine on their support, but not necessarily log-concave.  It will be convenient to have an explicit parametrisation of such densities.  Let $\mathcal{T}_0 := \{(s_1,s_2) \in \mathbb{R}^2:s_1 < s_2\}$ and 
\[
\mathcal{T} := (\mathbb{R} \times \mathcal{T}_0) \, \bigcup \, \bigl((0,\infty) \times \{-\infty\} \times \mathbb{R}\bigr) \, \bigcup \, \bigl((-\infty,0) \times \mathbb{R} \times \{\infty\}\bigr).
\]
Now, for $(\alpha,s_1,s_2) \in \mathcal{T}$, let 
\[
f_{\alpha,s_1,s_2}(x) := \left\{ \begin{array}{ll} \frac{1}{s_2-s_1}\one_{\{x \in [s_1,s_2]\}} & \mbox{if $\alpha=0$} \\
\frac{\alpha}{e^{\alpha s_2}-e^{\alpha s_1}} e^{\alpha x} \one_{\{x \in [s_1,s_2]\}} & \mbox{if $\alpha \neq 0$.} \end{array} \right.  
\]
Then we can write
\[
\mathcal{F}^1 = \{f_{\alpha,s_1,s_2}:(\alpha,s_1,s_2) \in \mathcal{T}\}.
\]
Thus the class $\mathcal{F}^1$ consists of uniform and (possibly truncated) exponential densities.  It is also convenient to define a continuous, strictly increasing function $q:\mathbb{R} \rightarrow [0,1]$ by
\begin{equation}
\label{qfunc}
q(x) := \left\{ \begin{array}{ll} \frac{x-2+e^{-x}(x+2)}{x\{1-e^{-x}(x+1)\}}
    & \mbox{for $x \neq 0$} \\ 
\frac{1}{3} & \mbox{for $x = 0$,} \end{array} \right.
\end{equation}
and to set $\rho(x) := \frac{1+q(x)}{1-q(x)}$.  As a preliminary calculation, we note that for $x \geq 2$,
\[
q(x)= 1-\frac{2}{x}+ \frac{x}{e^x - (1+x)} \leq 1-\frac{1}{x},
\]
so that $\rho(x) \leq \max\{\rho(2),\rho(x)\} \leq \max(3,2x)$ for all $x \in \mathbb{R}$.
\begin{theorem}\label{ct}
Let $f_0$ be any density on the real line, let $X_1,\ldots,X_n \stackrel{\mathrm{iid}}{\sim} f_0$ for some $n \geq 5$, and let $\hat{f}_n$ denote the corresponding log-concave maximum likelihood estimator.  Fix an arbitrary $f_{\alpha,s_1,s_2} \in \mathcal{F}^1$, write $\kappa^* := \alpha(s_2-s_1)$, let $d_{\mathrm{TV}} := d_{\mathrm{TV}}(f_{\alpha,s_1,s_2},f_0)$ and let $d_{\mathrm{KS}}^{(n)} := \|F_{\alpha,s_1,s_2}^n - F_0^n\|_\infty + \|(1-F_{\alpha,s_1,s_2})^n - (1-F_0)^n\|_\infty$, where $F_{\alpha,s_1,s_2}$ and $F_0$ are the distribution functions corresponding to $f_{\alpha,s_1,s_2}$ and $f_0$ respectively.  Then, for $t \geq 0$, the following two bounds hold:
  \begin{align}\label{ct.eq}
(i) \ &\P_{f_0}\bigl[d_{\mathrm{TV}}(\hat{f}_n, f_0) \geq t + \{1+2\rho(|\kappa^*|)\}d_{\mathrm{TV}}\bigr] \leq 2 \exp\biggl(-\frac{nt^2}{2\rho^2(|\kappa^*|)}\biggr) + d_{\mathrm{KS}}^{(n)}, \\
\label{ct.eq2}(ii) \ &\P_{f_0}\bigl\{d_{\mathrm{TV}}(\hat{f}_n, f_0) \geq t + (1+6\log n)d_{\mathrm{TV}}\bigr\} \leq 2\exp\biggl(-\frac{nt^2}{18\log^2 n}\biggr) + \frac{1}{n^{1/2}} + d_{\mathrm{KS}}^{(n)},
  \end{align}
where we interpret~\eqref{ct.eq} as uninformative if $|\kappa^*| = \infty$.  Moreover,
\begin{align}
\label{Eq:TVrisk}
\E_{f_0} d_{\mathrm{TV}}(\hat{f}_n, f_0) \leq \inf_{f_{\alpha,s_1,s_2} \in \mathcal{F}^1}\biggl\{\frac{c_n}{n^{1/2}} + (1+c_n)d_{\mathrm{TV}} + d_{\mathrm{KS}}^{(n)}\biggr\},
\end{align}
where $c_n = c_n(f_{\alpha,s_1,s_2}) := \min\{2\rho(|\kappa^*|),6\log n\}$.
\end{theorem}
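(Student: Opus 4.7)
The plan is to use the log-affine density $f_{\alpha,s_1,s_2}$ as a bridge between $\hat f_n$ and $f_0$, passing through the empirical distribution function $\mathbb{F}_n$ and ultimately exploiting the Dvoretzky--Kiefer--Wolfowitz (DKW) inequality for the tail of $\|\mathbb{F}_n - F_0\|_\infty$. The chief new ingredient is a Marshall-type inequality for $\hat f_n$ (to be established in the Appendix), which bounds the Kolmogorov distance of $\hat F_n$ from $F_{\alpha,s_1,s_2}$ by a multiple of the corresponding distance from $\mathbb{F}_n$.

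For (i), I would first apply the triangle inequality $d_{\mathrm{TV}}(\hat f_n, f_0) \leq d_{\mathrm{TV}}(\hat f_n, f_{\alpha,s_1,s_2}) + d_{\mathrm{TV}}$, and then observe that, since $\log \hat f_n$ is concave and $\log f_{\alpha,s_1,s_2}$ is affine, their difference is concave, so the set $\{\hat f_n > f_{\alpha,s_1,s_2}\}$ is an interval $[a,b]$. This yields
\begin{equation*}
d_{\mathrm{TV}}(\hat f_n, f_{\alpha,s_1,s_2}) = [\hat F_n(b)-\hat F_n(a)] - [F_{\alpha,s_1,s_2}(b)-F_{\alpha,s_1,s_2}(a)] \leq 2\|\hat F_n - F_{\alpha,s_1,s_2}\|_\infty.
\end{equation*}
On the event $E := \{s_1 \leq X_{(1)} \leq X_{(n)} \leq s_2\}$, the support of $\hat f_n$ lies inside $[s_1,s_2]$, and Marshall's inequality for the log-concave MLE then gives $\|\hat F_n - F_{\alpha,s_1,s_2}\|_\infty \leq \rho(|\kappa^*|)\|\mathbb{F}_n - F_{\alpha,s_1,s_2}\|_\infty$; another triangle inequality bounds the right-hand side by $\rho(|\kappa^*|)(\|\mathbb{F}_n - F_0\|_\infty + d_{\mathrm{TV}})$. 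The event $E$ fails with probability at most $d_{\mathrm{KS}}^{(n)}$: indeed, $\P_{f_0}(X_{(n)} > s_2) = 1 - F_0(s_2)^n = F_{\alpha,s_1,s_2}(s_2)^n - F_0(s_2)^n \leq \|F_{\alpha,s_1,s_2}^n - F_0^n\|_\infty$, and a symmetric estimate for $\P_{f_0}(X_{(1)} < s_1)$ uses the second term in $d_{\mathrm{KS}}^{(n)}$. Applying DKW to $\|\mathbb{F}_n - F_0\|_\infty$ at threshold $t/(2\rho(|\kappa^*|))$ then delivers bound (i).

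For (ii), I would follow the same template, but replace the data-independent Marshall constant $\rho(|\kappa^*|)$ by the bound $3\log n$, which should hold (independently of $\kappa^*$) on an event of probability at least $1 - n^{-1/2}$. This should come from an alternative Marshall-type estimate in the Appendix whose multiplicative factor is controlled by the effective slope of $\hat f_n$ itself, combined with a probabilistic bound showing that this slope is $O(\log n)$ with high probability. For the expectation bound, I would integrate the tails:
\begin{equation*}
\E_{f_0}[d_{\mathrm{TV}}(\hat f_n, f_0) - (1+c_n)d_{\mathrm{TV}}]_+ \leq d_{\mathrm{KS}}^{(n)} + n^{-1/2} + \int_0^\infty 2\exp\bigl(-nt^2/(2c_n^2)\bigr)\,dt,
\end{equation*}
with the Gaussian integral equal to $c_n\sqrt{2\pi/n}$ and absorbed into the advertised $c_n/n^{1/2}$ term after taking the minimum of the two choices of constant coming from (i) and (ii); the infimum over $f_{\alpha,s_1,s_2} \in \mathcal{F}^1$ in \eqref{Eq:TVrisk} is then immediate since the intermediate benchmark was arbitrary throughout.

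The main obstacle is the new Marshall's inequality itself. The classical version for the Grenander estimator rests on the fact that the least concave majorant preserves concavity, but the log-concave MLE is a less explicit projection, so obtaining a sharp multiplicative constant $\rho(|\kappa^*|)$ that degrades gracefully to $3$ in the uniform limit $\kappa^*=0$ requires a careful geometric argument relating the graph of $\hat F_n$ to that of $F_{\alpha,s_1,s_2}$, presumably via the characterisation of $\hat f_n$ through its Karush--Kuhn--Tucker conditions. Establishing the analogous high-probability version used in (ii), which eliminates the dependence on $\kappa^*$ entirely, is expected to be the second technical hurdle.
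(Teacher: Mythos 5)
Your argument for part (i) is essentially the paper's: triangle inequality, the interval structure of $\{\hat f_n \geq f_{\alpha,s_1,s_2}\}$ to pass to $2\|\hat F_n - F_{\alpha,s_1,s_2}\|_\infty$, the Marshall-type lemma, another triangle inequality, and DKW, with the failure of the event $\{s_1 \le X_{(1)} \le X_{(n)} \le s_2\}$ controlled by $d_{\mathrm{KS}}^{(n)}$. (One small remark: Lemma~\ref{miu} actually gives a factor $\rho(|\kappa|)$ with the data-dependent $\kappa = \alpha(X_{(n)}-X_{(1)})$; on your event $E$ one has $|\kappa|\le|\kappa^*|$ and $\rho$ increasing, so your statement is a valid consequence. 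And the naive integral $\int_0^\infty 2e^{-nt^2/(2c_n^2)}\,dt \approx 1.25\,c_n\sqrt{2\pi}/n^{1/2}$ is slightly larger than the advertised $c_n/n^{1/2}$, so a more careful split of the integral at a threshold $s^*$ of order $c_n/n^{1/2}$ is needed to get the stated constant, but this is a constant-level issue only.)

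The genuine gap is in your treatment of part (ii). You posit ``an alternative Marshall-type estimate in the Appendix whose multiplicative factor is controlled by the effective slope of $\hat f_n$ itself, combined with a probabilistic bound showing that this slope is $O(\log n)$ with high probability.'' No such lemma exists in the paper, and this is not the mechanism that produces the $\log n$ factor. The paper uses the \emph{same} Lemma~\ref{miu}, whose factor is $\rho(|\kappa|)$ with the data-dependent $\kappa = \alpha(X_{(n)}-X_{(1)})$; the key point is that when $\alpha\neq 0$, one can show directly that $|\kappa|$ itself is $O(\log n)$ with high probability, not by controlling $\hat f_n$'s slope, but by exploiting the exponential tails of $f_{\alpha,s_1,s_2}$. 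Concretely, when $\alpha<0$ (so $s_1>-\infty$), $\P_{f_0}\bigl(|\kappa| > \tfrac{B}{2}\log n\bigr) \le \P_{f_0}\bigl(X_{(n)} > s_1 - \tfrac{B\log n}{2\alpha}\bigr) + \P_{f_0}(X_{(1)}<s_1)$, and comparing with a sample from $f_{\alpha,s_1,s_2}$ (at a cost of $d_{\mathrm{KS}}^{(n)}$), the first probability is bounded by $1-(1-n^{-B/2})^n \le n^{-(B/2-1)}$. Taking $B=3$ gives the $n^{-1/2}$ term and, via $\rho(x)\le\max(3,2x)$, the factor $6\log n$. The slope of the estimator $\hat f_n$ plays no role; attempting to build a Marshall inequality whose constant depends on $\hat f_n$ would require controlling a much less tractable random object and is not the route to take.
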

To aid with the interpretation of the second part of Theorem~\ref{ct}, first consider the case where $f_0 = f_{\alpha,s_1,s_2} \in \mathcal{F}^1$, so that $d_{\mathrm{TV}} = d_{\mathrm{KS}}^{(n)} = 0$.  In that case, provided $|\kappa^*| = |\alpha|(s_2-s_1)$ is not too large, the first term in the minimum in the definition of $c_n$ guarantees that the log-concave maximum likelihood estimator attains the parametric rate of convergence.  In particular, if $f_0 \in \mathcal{F}^1$ is a uniform density on a compact interval, then we may take $\alpha = 0 = \kappa^*$, and find that
\[
\E_{f_0} d_{\mathrm{TV}}(\hat{f}_n, f_0) \leq \frac{4}{n^{1/2}}.
\]
On the other hand, if $f_0 = f_{\alpha,s_1,s_2} \in \mathcal{F}^1$ where $|\kappa^*|$ is large (e.g.\ if it is infinite) then the second term in the minimum in the definition of $c_n$ may give a better bound, and guarantees that we attain the parametric rate up to a logarithmic factor.  More generally, if $f_0$ is any density such that 
\[
\inf_{f_{\alpha,s_1,s_2} \in \mathcal{F}^1}(d_{\mathrm{TV}} + d_{\mathrm{KS}}^{(n)}) = o(n^{-2/5}\log^{-1} n),
\]
then the rate provided by~\eqref{Eq:TVrisk} is faster than that given by the worst-case minimax theory\footnote{The fact that we work with the risk in total variation distance rather than squared total variation distance is not significant.  However, it is worth recalling that Theorem~\ref{ct} does not control the (larger) Hellinger risk.}.  
In fact, there is a special class $\mathcal{F}_* \subseteq \mathcal{F}$ such that when $f_0 \in \mathcal{F}_*$ we can prove an alternative bound on the total variation distance between $\hat{f}_n$ and $f_0$ that slightly improves and simplifies the bounds provided in Theorem~\ref{ct}.  To define this class, for $f \in \mathcal{F}$, let $D_f := \{x:f(x) > 0\}$, and let 
\begin{equation}
\label{Eq:F_*}
\mathcal{F}_* := \bigl\{f \in \mathcal{F}:f(x) = e^{\gamma x}h(x) \ \text{for all $x \in D_f$, some $\gamma \in \mathbb{R}$, $h:D_f \rightarrow [0,\infty)$ concave}\bigr\}. 
\end{equation}
As examples, if $f \in \mathcal{F}$ is concave on its (necessarily bounded) support $D_f$, then $f \in \mathcal{F}_{*}$ since we can take $\gamma = 0$ and $h(x) = f(x)$ for $x \in D_f$.  Moreover, $\mathcal{F}^1 \subseteq \mathcal{F}_*$, and the family of $\Gamma(\alpha,\beta)$ densities with $\alpha \in [1,2]$, $\beta > 0$ also belongs to $\mathcal{F}_*$.  When $f_0 \in \mathcal{F}_*$, the factors of $1+2\rho(|\kappa^*|)$, $1+6 \log n$ and $1+c_n$ in~\eqref{ct.eq},~\eqref{ct.eq2} and~\eqref{Eq:TVrisk} respectively can be replaced simply with 3.  See Proposition~\ref{Prop:ct} in the Appendix for details.


The proof of Theorem \ref{ct} is crucially based on the following analogue of the classical Marshall's inequality for decreasing density estimation \citep{Marshall1970}.  
\begin{lemma} \label{miu} 
Let $n \geq 2$, let $X_1, \dots, X_n$ be real numbers that are not all equal, with empirical distribution function $\mathbb{F}_n$, and let $\hat{f}_n$ denote the corresponding log-concave maximum likelihood estimator.  Let $X_{(1)} := \min_i X_i$ and $X_{(n)} := \max_i X_i$.  Let $f_0$ be a density such that $f_0(x) = e^{\alpha_0 x}h_0(x)$ for $x \in [X_{(1)},X_{(n)}]$, where $\alpha_0 \in \mathbb{R}$ and $h_0:[X_{(1)},X_{(n)}] \rightarrow \mathbb{R}$ is concave, and let $\kappa := \alpha_0(X_{(n)} - X_{(1)})$.  Writing $F_0$ and $\hat{F}_n$ for the distribution functions corresponding to $f_0$ and $\hat{f}_n$ respectively, we have   
  \begin{equation}\label{miu.eq2}
 \|\hat{F}_n - F_0\|_\infty \leq \rho(|\kappa|)\|\mathbb{F}_n - F_0\|_\infty. 
    \end{equation}
\end{lemma}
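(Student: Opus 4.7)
My plan combines the variational characterization of the log-concave maximum likelihood estimator with a local analysis on each affine piece of $\hat{\phi}_n := \log \hat{f}_n$. I recall (from \citet{DumbgenRufibach2009}) that $\hat{\phi}_n$ is a concave, piecewise-affine function with knots $\tau_0 = X_{(1)} < \tau_1 < \dots < \tau_m = X_{(n)}$, characterised by
\begin{equation*}
\int_{X_{(1)}}^{t} \bigl\{\mathbb{F}_n(s) - \hat{F}_n(s)\bigr\}\, ds \;\geq\; 0 \qquad \text{for all $t \in \mathbb{R}$,}
\end{equation*}
with equality at every knot $\tau_j$. Examining the one-sided derivatives of this antiderivative at a knot gives $\mathbb{F}_n(\tau_j^-) \leq \hat{F}_n(\tau_j) \leq \mathbb{F}_n(\tau_j)$, so that $|\hat{F}_n(\tau_j) - F_0(\tau_j)| \leq \|\mathbb{F}_n - F_0\|_\infty$ at each knot. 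Hence any point $x^*$ at which $|\hat{F}_n - F_0|$ exceeds $\|\mathbb{F}_n - F_0\|_\infty$ must lie strictly inside some affine piece $[\tau_{j-1}, \tau_j]$ of $\hat{\phi}_n$, and the first-order condition there forces $\hat{f}_n(x^*) = f_0(x^*)$.

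The next step exploits the structure $f_0 = e^{\alpha_0 x} h_0$ with $h_0$ concave. Writing $\hat{f}_n(t) = A e^{Bt}$ on the piece,
\begin{equation*}
\log(\hat{f}_n/f_0)(t) \;=\; (B - \alpha_0)\, t + \log A - \log h_0(t)
\end{equation*}
is a convex function of $t$, since $\log h_0$ is concave (being the logarithm of a positive concave function). Consequently $\hat{f}_n - f_0$ has at most two sign changes on the piece, so $\hat{F}_n - F_0$ admits at most one interior local maximum and one interior local minimum. Combined with the zero-integral identity $\int_{\tau_{j-1}}^{\tau_j}(\mathbb{F}_n - \hat{F}_n)\, dt = 0$ and the knot bounds $|(\hat{F}_n - F_0)(\tau_{j-1})|, |(\hat{F}_n - F_0)(\tau_j)| \leq \|\mathbb{F}_n - F_0\|_\infty$, this restricts the geometry of $\hat{F}_n - F_0$ on the piece quite tightly.

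The last step is a constrained extremal calculation. I would argue, via a perturbation within the concave cone of admissible $h_0$'s, that the worst case is attained when $h_0$ is constant on the piece, so that $f_0$ reduces to a scaled exponential of rate $\alpha_0$; heuristically, strict concavity of $h_0$ only strengthens the convexity of $\log(\hat{f}_n/f_0)$ and further constrains the interior extrema. Reducing to this case and parametrising the log-linear $\hat{f}_n$ by its slope $B$, the identity $\hat{f}_n(x^*) = f_0(x^*)$ together with the moment condition obtained from the knot equality pins down $x^*$ as the point whose position within $[\tau_{j-1}, \tau_j]$ is governed by the mean of a truncated exponential distribution on that interval---and this is precisely the computation producing the function $q$ in~\eqref{qfunc}. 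The resulting closed-form bound is $|(\hat{F}_n - F_0)(x^*)| \leq \rho\bigl(|\alpha_0|(\tau_j - \tau_{j-1})\bigr)\, \|\mathbb{F}_n - F_0\|_\infty$. Since $\tau_j - \tau_{j-1} \leq X_{(n)} - X_{(1)}$ and $\rho$ is non-decreasing on $[0, \infty)$ (a direct consequence of the monotonicity of $q$, which can be checked analytically), this is at most $\rho(|\kappa|)\, \|\mathbb{F}_n - F_0\|_\infty$, giving~\eqref{miu.eq2}.

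The main obstacle is the extremal computation in the last paragraph: rigorously reducing to the constant-$h_0$ case, and then executing the calculus that singles out the precise function $q$. Everything else---the MLE characterization, the sandwiching at knots, and the convexity of $\log(\hat{f}_n/f_0)$---is structural, but identifying $\rho$ as the sharp pointwise-to-sup-norm transfer constant requires one to solve the relevant one-variable optimization problem in $B$ and verify that the extremum coincides with the value encoded by $q$.
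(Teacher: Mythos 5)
Your set-up correctly identifies the essential structural ingredients: the D\"umbgen--Rufibach characterization $\int_{-\infty}^t(\mathbb{F}_n - \hat{F}_n) \geq 0$ with equality at knots, the sandwich $\mathbb{F}_n(\tau_j) - n^{-1} \leq \hat{F}_n(\tau_j) \leq \mathbb{F}_n(\tau_j)$ at knots, and the convexity of $\log(\hat{f}_n/f_0)$ on each affine piece (equivalently, the convexity of $e^{-\alpha_0 x}\{\hat{f}_n(x) - f_0(x)\}$). All of this mirrors the paper's proof. However, the step you yourself flag as ``the main obstacle'' is precisely where your argument ends and the paper's begins: you propose to reduce to the worst case $h_0 \equiv$ const by a perturbation argument in the cone of concave $h_0$'s and then solve a one-variable optimisation in the slope $B$. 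You do not carry this out, and the reduction is not as simple as ``strict concavity only helps'' — it is not an optimisation over $h_0$ at all in the paper's proof.

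What the paper does instead is a comparison-function argument rather than an extremal-perturbation argument. On the affine piece $[a,b]$ containing the argmin $r$ of $G := \hat{F}_n - F_0$, it sets $g(x) := e^{-\alpha_0 x}\{\hat{f}_n(x) - f_0(x)\}$, which is convex with $g(r) = 0$, and compares $g$ with a single linear function $\bar{g}$ chosen so that $\bar{g}(r) = 0$ and $\int_a^r e^{\alpha_0 t}\bar{g} = \int_a^r e^{\alpha_0 t} g$. Convexity forces $g - \bar{g}$ to change sign exactly once, which (Lemma~\ref{coni}) yields an explicit \emph{pointwise} upper bound on $G(x)$ for $x \in [r,b]$ in terms of $G(r)$, $G(b)$ and the exponential shape factor; integrating this bound over $[r,b]$ and inserting the D\"umbgen--Rufibach and knot-sandwich inequalities then produces the linear inequality that, after rearrangement, gives $G(r) \geq -\rho(\kappa_-)\|\mathbb{F}_n - F_0\|_\infty$ with $q$ and $\rho$ arising from explicit integrals of $e^{\alpha_0 t}\bar{g}$. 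The point is that $\rho$ appears as a closed-form output of the comparison, not as the solution to an optimisation over slopes $B$; your plan would still need a separate argument that the constant-$h_0$ (i.e.\ log-linear $f_0$) case is extremal, and it is unclear how you would verify that, nor that the $B$-optimisation even reproduces $q$. Until that gap is closed, the proposal does not establish the lemma.
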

\noindent \textbf{Remark}: \citet{DumbgenRufibach2009} found that in all of their simulations,
\begin{equation}
\label{Eq:DR}
\|\hat{F}_n - F_0\|_\infty \leq \|\mathbb{F}_n - F_0\|_\infty,
\end{equation}
provided $F_0$ has a log-concave density.  However, since $\rho(x) \rightarrow \infty$ as $x \rightarrow \infty$, it is worth noting that Lemma~\ref{miu} is in line with their observation that `\ldots one can construct counterexamples showing that~[\eqref{Eq:DR}] may be violated, even if the right-hand side is multiplied with any fixed constant $C > 1$'.

Although Lemma~\ref{miu} is stated as a deterministic result, the main case of interest is where $X_1,\ldots,X_n$ are independent and identically distributed, and we apply the result to some density $f_0 \in \mathcal{F}_*$ (not necessarily the true density).  The original Marshall's inequality applies to the integrated Grenander estimator when $F_0$ is concave; in that case, the multiplicative factor $\rho(|\kappa|)$ can be replaced with 1.  \citet{dumbgen2007marshall} proved a similar result for the integrated version of the least squares estimator of a convex density on $[0,\infty)$; there, a multiplicative constant 2 is needed.  In the special case where $f_0$ is concave on the convex hull of the data, we can take $\alpha_0 = 0 = \kappa$, and the multiplicative constant in Lemma~\ref{miu} can also be taken to be 2.

\section{Rates for densities whose logarithms are close to $k$-affine}
\label{Sec:kaffine}

In this section, we extend significantly the class of densities for which we can prove adaptation of the log-concave maximum likelihood estimator.  Recall that, for $k \in \mathbb{N}$, the class $\mathcal{F}^k$ denotes the set of log-concave densities $f \in \mathcal{F}$ for which $\log f$ is $k$-affine. The following is the main theorem of this section. 
\begin{theorem}\label{fik}
There exists a universal constant $C > 0$ such that for every $n \geq 2$ and every $f_0 \in \F$, we have 
  \begin{equation}
    \label{fik.eq}
    \E_{f_0} \kld(\hat{f}_n, f_0) \leq \inf_{k \in \mathbb{N}}
    \biggl\{\frac{Ck}{n} \log^{5/4}n + \inf_{f_k \in \F^k} \kl(f_0, f_k) \biggr\}. 
  \end{equation}
\end{theorem}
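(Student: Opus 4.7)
My plan is to combine the variational characterisation of $\hat f_n$ as a log-concave MLE with a localised bracketing-entropy bound for $\F$ in a neighbourhood of a near-optimal $k$-affine approximant. Fix $k \in \mathbb{N}$ and $f_k \in \F^k$. Splitting $\log(\hat f_n/f_0) = \log(\hat f_n/f_k) + \log(f_k/f_0)$ and taking expectations (the cross-term contributing $-\kl(f_0,f_k)$) gives the identity
\begin{equation*}
\E_{f_0}\kld(\hat f_n, f_0) \;=\; \E_{f_0}\biggl[\frac{1}{n}\sum_{i=1}^n\log\frac{\hat f_n(X_i)}{f_k(X_i)}\biggr]\;-\;\kl(f_0, f_k).
\end{equation*}
Since $f_k \in \F$ is a feasible candidate in the MLE problem, the basic inequality gives $R_n := n^{-1}\sum\log(\hat f_n/f_k)(X_i) \geq 0$, so it suffices to show $\E_{f_0}R_n \leq Ckn^{-1}\log^{5/4}n + 2\kl(f_0,f_k)$; the $-\kl(f_0,f_k)$ above then absorbs one copy of the approximation error.

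To control $\E_{f_0}R_n$ I would run a standard MLE empirical-process argument. Conditionally on $\hat f_n$, decompose $R_n = \kl(f_0,f_k) - \kl(f_0,\hat f_n) + \nu_n\bigl(\log(\hat f_n/f_k)\bigr)$, where $\nu_n$ denotes the centred empirical process. Combined with $R_n \geq 0$, a dyadic peeling of $\hat f_n$ over Hellinger shells $\bigl\{f \in \F : \shel(f,f_k) \in [2^{j}\delta, 2^{j+1}\delta)\bigr\}$ and Bernstein-type concentration produces a high-probability localisation $\shel^2(\hat f_n, f_k) \lesssim \delta_n^2 + \kl(f_0,f_k)$, where $\delta_n$ is the fixed point of the standard Van de Geer equation
\begin{equation*}
\sqrt{n}\,\delta_n^2 \;\asymp\; \int_0^{\delta_n}\sqrt{\log N_{[]}\bigl(\epsilon, \F(\delta_n,f_k), \shel\bigr)}\,d\epsilon,
\qquad \F(\delta, f_k) := \{f \in \F : \shel(f,f_k) \leq \delta\}.
\end{equation*}
A log-concave-specific comparison between $R_n$ and $\shel^2(\hat f_n,f_k)$ (in the spirit of the $\kld \geq \kl$ reduction invoked in the introduction, but going in the opposite direction on the log-scale) would then convert this Hellinger control into the required bound on $\E_{f_0}R_n$.

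The main technical obstacle, and the heart of the argument, is the local bracketing-entropy estimate
\begin{equation*}
\log N_{[]}\bigl(\epsilon, \F(\delta, f_k), \shel\bigr) \;\lesssim\; k\cdot\mathrm{polylog}(1/\epsilon, n),
\end{equation*}
which must be markedly sharper than the global $\epsilon^{-1/2}$ scaling known for $\F$. The plan is to partition the support of $f_k$ along its $k$ affine pieces $I_1, \dots, I_k$ and, on each piece, cover the concave functions within Hellinger distance $\delta$ of the corresponding affine piece of $\log f_k$. Concavity together with Hellinger closeness to an affine function is a strong constraint that forces an essentially parametric structure per piece, yielding only $O(\log(1/\epsilon))$ brackets per piece and $O(k\log(1/\epsilon))$ in total for the bulk. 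The tails of $f_k$ (where an unbounded piece may carry non-zero slope) and the boundary behaviour near the inter-piece knots then contribute additional logarithmic factors in $n$ via truncation arguments, and it is precisely the interplay of these through Dudley's integral that produces the exponent $5/4$ rather than a cleaner $1$. Substituting the resulting fixed point $\delta_n^2 \asymp \frac{k}{n}\log^{5/4}n$ back into the peeling bound and optimising over $f_k \in \F^k$ and $k \in \mathbb{N}$ yields~\eqref{fik.eq}.
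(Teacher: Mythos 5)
Your outer bookkeeping is fine: the identity $\E_{f_0}\kld(\hat f_n,f_0)=\E_{f_0}R_n-\kl(f_0,f_k)$ is correct, $R_n\geq 0$ since $f_k\in\F^k\subseteq\F$ is feasible, and an MLE peeling argument is the right general engine. But the claim that carries your whole plan is false as stated. You assert that the local bracketing entropy around $f_k\in\F^k$ is $O(k\cdot\mathrm{polylog}(1/\epsilon))$, i.e.\ that ``concavity plus Hellinger-closeness to an affine piece'' yields ``only $O(\log(1/\epsilon))$ brackets per piece.'' The class of concave functions bounded on a compact interval has metric entropy of order $\epsilon^{-1/2}$, and localising in Hellinger distance to a log-affine centre shrinks the prefactor but not the exponent: the paper's own local entropy bound (Theorem~\ref{ment}) for the $k=1$ case reads $H_{[]}\bigl(2^{1/2}\epsilon,\F(f_0,\delta),\shel\bigr)\leq C\log^{5/4}(1/\delta)\bigl((\delta+\upsilon)/\epsilon\bigr)^{1/2}$, which still blows up like $\epsilon^{-1/2}$ as $\epsilon\to 0$. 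So your entropy claim is not merely unproven; it contradicts known lower bounds on the entropy of concave classes. Moreover, no local entropy bound around a $k$-piece density is established in the paper at all; Theorem~\ref{ment} is proved only near $\F^1$, after reduction (by affine invariance) to three canonical forms, and the interplay between pieces (knot locations, level-matching, global concavity and normalisation across pieces) makes a direct $\F^k$ extension a genuinely hard open step that you would need to supply.

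The paper sidesteps this entirely: it partitions the data according to the $k$ maximal affine intervals $I_1,\dots,I_k$ of $\log f_k$, bounds the contribution from intervals with at most one observation by $\frac{k}{n}\max_i\log(\hat f_n(X_i)/f_0(X_i))$, and on the remaining intervals compares $\hat f_n$ with the MLE over piecewise-log-concave densities, which factorises as $\tilde f_n=\frac{N_j}{M_1}\hat f^{(j)}$ on $I_j$ with $\hat f^{(j)}$ the log-concave MLE of the data in $I_j$. Conditioning on the counts $N_1,\dots,N_k$ then lets one apply the $k=1$ oracle bound (Theorem~\ref{k1}, which does use Theorem~\ref{ment} plus van de Geer's Corollary~7.5) separately on each piece, with multinomial fluctuation terms controlled elementarily. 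This reduction is the key idea your proposal misses. Two further points: (i) your basic-inequality decomposition introduces $-\kl(f_0,\hat f_n)$, which is $+\infty$ almost surely since the support of $\hat f_n$ is $[X_{(1)},X_{(n)}]$, strictly inside that of $f_0$; the standard fix (convexified densities $(f+f_0)/2$, as in the paper's restatement of van de Geer's result in Theorem~\ref{imee}) needs to be built in; (ii) you also peel around $f_k$ rather than around $f_0$, which requires a modified two-centre peeling lemma that you would need to state and verify.
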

One consequence of Theorem~\ref{fik} is that when $\log f_0$ is close to $k$-affine for some $k$ in the sense that $\inf_{f_k \in \F^k} \kl(f_0, f_k) = O\bigl(\frac{k}{n} \log^{5/4}n\bigr)$, then the log-concave MLE $\hat{f}_n$  converges to $f_0$ at rate $O\bigl(\frac{k}{n} \log^{5/4} n\bigr)$, which is almost the parametric rate when $k$ is small.  In particular, provided $k = o(n^{1/5}\log^{-5/4} n)$, the rate provided by Theorem~\ref{fik} is faster than the minimax rate over all log-concave densities of $O(n^{-4/5})$ \citep{KimSamworth2016}\footnote{Although Theorem~5 of \citet{KimSamworth2016} is stated for the squared Hellinger risk, it can easily be extended to a bound for the $d_X^2$ risk by appealing to Corollary~7.5 of \citet{vandegeerbook}.}.

A result similar to \eqref{fik.eq} was recently proved by \citet[Corollary 4]{BaraudBirge2016} for their $\rho$-estimator. More precisely, they proved that there exists a universal constant $C > 0$ such that 
  \begin{equation}\label{rho}
    \E_{f_0} \hel(\hat{f}_{\rho}, f_0 )  \leq C \inf_{k \in \mathbb{N}} \biggl\{\frac{k}{n} \log^3 \frac{en}{k} + \inf_{f_k \in \F^k} \hel(f_0, f_k) \biggr\}
  \end{equation}
where $\hat{f}_{\rho}$ denotes the $\rho$-estimator based on a sample of size $n$, defined in \citet{BaraudBirge2016}. The differences between~Theorem~\ref{fik} and~\eqref{rho} are as follows: 
  \begin{enumerate}
  \item Theorem~\ref{fik} deals with the log-concave MLE while~\eqref{rho} deals with the $\rho$-estimator. While the $\rho$-estimator is very interesting and general, at the moment, we are not aware of algorithms for computing it. On the other hand, the log-concave MLE can be easily computed via active set methods for convex optimisation \citep{DumbgenRufibach2011}. 
  \item Theorem~\ref{fik} is a sharp oracle inequality in the sense that the approximation term $\inf_{f_k \in \F^k} \kl(f_0, f_k)$ in~\eqref{fik.eq} has leading constant 1.
  \item Theorem~\ref{fik} bounds the risk with respect to the loss function $\kld$, which is larger than the squared Hellinger risk studied in~\eqref{rho}.  On the other hand, the right-hand side of~\eqref{rho} involves $\inf_{f_k \in \F^k} \hel(f_0, f_k)$, which may be smaller than the term $\inf_{f_k \in \F^k} \kl(f_0, f_k)$ that appears on the right-hand side of~\eqref{fik.eq}.
  \item Inequality~\eqref{fik.eq} has a $\log^{5/4} n$ term on the right-hand side while inequality~\eqref{rho} has a $\log^3(en/k)$ term. In the regime where $k = o(n^{1/5}\log^{-5/4} n)$ (which as explained above is really the only interesting regime for Theorem \ref{fik}), inequality~\eqref{fik.eq} has a smaller logarithmic term compared with~\eqref{rho}. 
  \end{enumerate}
Our proof of Theorem~\ref{fik} proceeds by first studying the special case where the infimum in the right-hand side of \eqref{fik.eq} is replaced by $k = 1$.  That case can be handled using empirical process theory techniques \citep[e.g.][]{vandegeerbook} together with a local bracketing entropy result for log-concave densities (cf.~Theorem~\ref{ment} below).  Before stating this result, we first recall the following definition of bracketing entropy:
\begin{defn}\label{bra}
Let $\G$ be a class of non-negative functions defined on $S \subseteq \R$. For $\epsilon > 0$, let $N_{[]}(\epsilon, \G, \shel,S)$ denote the smallest $M \in \mathbb{N}$ for which there exist pairs of functions $\{[g_{L, j}, g_{U, j}]: j = 1, \dots, M\}$ such that     
  \begin{equation*}
    \int_{-\infty}^\infty (g_{U, j}^{1/2} - g_{L, j}^{1/2})^2 \leq \epsilon^2 \qt{for every $j=1,\ldots,M$} 
  \end{equation*}
and such that for every $g \in \G$, there exists $j^* \in \{1,\ldots,M\}$ with $g_{L, j^*}(x) \leq g(x) \leq g_{U, j^*}(x)$ for every $x \in S$.  We also write $H_{[]}(\epsilon, \G, \shel,S) := \log N_{[]}(\epsilon, \G, \shel,S)$ and, when $S = \mathbb{R}$, $H_{[]}(\epsilon, \G, \shel) := H_{[]}(\epsilon, \G, \shel,\R)$, and refer to $H_{[]}(\epsilon, \G, \shel)$ as the $\epsilon$-bracketing entropy number of $\G$ under the Hellinger distance.  
\end{defn}
For $f_0 \in \mathcal{F}$ and $\delta > 0$, we also define $\F(f_0, \delta)  := \bigl\{f \in \F: f \ll f_0, d_{\mathrm{H}}(f, f_0) \leq \delta \bigr\}$.  We are now in a position to state our main local bracketing entropy bound for log-concave densities:  
\begin{theorem}\label{ment}
There exist universal constants $C,\kappa > 0$ such that for every $f_0 \in \F$ with $\upsilon := \inf \{\shel(f_0, f_1) : f_1 \in \F^1, f_0 \ll f_1\}$, and every $\epsilon > 0$,  
  \begin{equation}
    \label{ment.eq}
H_{[]}(2^{1/2}\epsilon, \F(f_0, \delta), \shel) \leq C \log^{5/4} \Bigl(\frac{1}{\delta}\Bigr)\biggl(\frac{\delta + \upsilon}{\epsilon}\biggr)^{1/2} 
  \end{equation}
provided $\delta + \upsilon < \kappa$.    
\end{theorem}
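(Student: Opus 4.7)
The plan has two stages: first, reduce the local bracketing entropy around $f_0$ to a local bracketing entropy around a log-affine density $f_1 \in \F^1$ via a triangle inequality; second, bound the latter by decomposing the support of $f_1$ into dyadic level-sets and invoking the classical bracketing entropy bound for concave functions on an interval. For the reduction, take any $\eta > 0$ and choose $f_1 \in \F^1$ with $f_0 \ll f_1$ and $\shel(f_0, f_1) \leq \upsilon + \eta$. If $f \in \F(f_0, \delta)$ then $f \ll f_0 \ll f_1$, and the triangle inequality gives $\shel(f, f_1) \leq \delta + \upsilon + \eta$. Hence $\F(f_0, \delta) \subseteq \F(f_1, r)$ with $r := \delta + \upsilon + \eta$, and it suffices to prove $H_{[]}(2^{1/2}\epsilon, \F(f_1, r), \shel) \leq C \log^{5/4}(1/r) (r/\epsilon)^{1/2}$ for every $f_1 \in \F^1$ and $r < \kappa$; the theorem then follows by letting $\eta \downarrow 0$ and noting $\log(1/r) \leq \log(1/\delta)$. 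Since $\F^1$ consists of uniform and (possibly truncated) exponential densities, and Hellinger bracketing numbers are invariant under affine reparametrisations of the argument, we may further assume $f_1$ has a canonical form such as the uniform density on $[0,1]$, the standard exponential on $[0, \infty)$, or a truncated exponential on $[0, 1]$.

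\textbf{Dyadic bracketing of $\F(f_1, r)$.} For $f \in \F(f_1, r)$, write $f = e^{\phi}$ with $\phi$ concave on $D_f \subseteq D_{f_1}$, and let $\psi := \phi - \log f_1$, which is concave on $D_f$ since $\log f_1$ is affine. Choose a threshold $\tau = \tau(r)$ and partition $D_{f_1}$ into a bulk region $S_0 := \{x : f_1(x) \geq \tau\}$ and dyadic shells $S_j := \{x : 2^{-(j+1)}\tau \leq f_1(x) < 2^{-j}\tau\}$ for $j = 1, \ldots, J$ with $J \asymp \log(1/r)$, arranged so that the Hellinger contribution from $\{f_1 < 2^{-J}\tau\}$ is absorbed into the $\epsilon$-budget. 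On each $S_j$, the constraint $\shel(f, f_1) \leq r$ together with $f_1 \asymp 2^{-j}\tau$ and log-concavity forces $\psi|_{S_j}$ to satisfy a uniform $L^\infty$ bound $B_j$ depending on $r$, $j$, and $\tau$; the classical bracketing entropy bound for concave functions uniformly bounded by $B_j$ on an interval, whose logarithm scales as $(B_j/\epsilon_j)^{1/2}$, then yields brackets for $f|_{S_j}$ of $L^2$-size $\epsilon_j$. Distributing the budget across shells via $\epsilon_j = w_j \epsilon$ with $\sum_j w_j^2 = 1$ and summing gives the overall bracketing entropy.

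\textbf{Optimisation and main obstacle.} The hard part is carrying out the optimisation sharply enough to obtain the exponent $5/4$ rather than a cruder one. This hinges on quantifying, for each shell $S_j$, the effective Hellinger budget $r_j$ available to $\psi|_{S_j}$ -- which is smaller than $r$ because the total $r^2$ is distributed across $J \asymp \log(1/r)$ shells -- and the resulting $L^\infty$ bound on $\psi|_{S_j}$; choosing $\{w_j\}$ and $\tau$ optimally then leads to a bound of order $(r/\epsilon)^{1/2} \log^{5/4}(1/r)$. A secondary technical difficulty is that $D_f$ may be a proper subset of $D_{f_1}$, so one first discretises the candidate endpoints of $D_f$ (contributing only logarithmic entropy that is subsumed by the main bound) and then combines the shell-wise brackets into global brackets on $\R$.
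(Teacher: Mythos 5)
Your opening reduction is essentially the paper's: triangle inequality to replace $f_0$ by a nearby $f_1 \in \F^1$, affine invariance of the Hellinger metric to put $f_1$ in canonical form, and a split into a small number of canonical cases. The paper uses exactly this outer structure (its three canonical forms are $f_{0,0,1}$, $f_{-\alpha,0,1}$ with $\alpha\in(0,18)$, and $f_{-1,0,a}$ with $a\in[18,\infty]$, which is a slightly more careful quotient of $\F^1$ by the affine group than the three you name). The genuine gaps are in the interior of the argument.

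First, the decomposition into level sets $S_j = \{x: 2^{-(j+1)}\tau \leq f_1(x) < 2^{-j}\tau\}$ is vacuous precisely in the uniform case, where $f_1$ is constant on its support: every $S_j$ is either empty or all of $[0,1]$, so this produces no decomposition at all. Yet the uniform case is the foundational case — the exponential cases are ultimately handled by reducing to it after rescaling by $e^{-x}$. What is actually needed is a \emph{spatial} decomposition keyed to proximity to the boundary of the support: the paper splits $[0,1/2]$ into $[0,4\delta^2]$ and then dyadic intervals $[\eta_j,\eta_{j+1}]$ with $\eta_j = 4\delta^2 2^j$, because that is the geometry along which the pointwise bounds degrade.

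Second, the claim that the Hellinger constraint plus log-concavity forces a uniform $L^\infty$ bound on $\psi|_{S_j}$ is false near the boundary of $D_{f_1}$. Any $f \in \F(f_1,r)$ may vanish on a neighbourhood of the endpoints, so $\log f = -\infty$ there and there is no lower bound. The paper's Lemma~\ref{ub} gives a uniform \emph{upper} bound $\log f \leq 2^{13/2}\delta$ everywhere, but the lower bound $\log f(x) \geq -4\delta/\{\min(x,1-x)\}^{1/2}$ is only valid when $\min(x,1-x) \geq 4\delta^2$. For the innermost strip $[0,4\delta^2]$ one cannot use a two-sided bound at all; this is why the paper proves Proposition~\ref{meni}, a bracketing-entropy bound for log-concave functions bounded above but \emph{not} below, which is itself built from Proposition~\ref{nny} to track the unknown endpoints of $D_f$. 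Calling this "classical" understates it: the classical $(B/\epsilon)^{1/2}$ bound you invoke is for $L^2$-bracketing of two-sided-bounded concave functions, whereas what is needed is Hellinger bracketing of log-concave densities with a one-sided bound, and the correct scaling picks up the additional factors $e^{B_2/4}(b-a)^{1/4}$ visible in Proposition~\ref{mb1}. Finally, the "distributed Hellinger budget $r_j \asymp r/\sqrt{J}$" heuristic is not how the per-shell oscillation bounds are obtained; they come from the total budget $r$ and concavity, with the per-shell allocation appearing only in the choice of $\epsilon_j = \epsilon/(2l+2)^{1/2}$. You have correctly identified that extracting the $\log^{5/4}(1/\delta)$ exponent is the delicate step, but the proposal does not carry it out, and the ingredients it would need (the spatial near-boundary decomposition, the one-sided-bound entropy bound, and the $e^{x/2}\delta$-scale pointwise bounds of Lemma~\ref{blan} for the exponential case) are the substance of the theorem.
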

It is instructive to compare Theorem~\ref{ment} with other recent global bracketing entropy results for log-concave densities on the real line.  The class $\mathcal{F}$ is not totally bounded with respect to Hellinger distance, but since this metric is invariant to affine transformations, one can consider subclasses of $\mathcal{F}$ with mean and variance restrictions.  More precisely, for $\xi \geq 0$ and $\eta \in (0,1)$, let
\begin{align*}
\tilde{\mathcal{F}}^{\xi, \eta} &:= \{f \in \mathcal{F}:|\mu_f| \leq \xi,|\sigma_f^2-1| \leq \eta\}.
\end{align*}
\citet[][Theorem~4]{KimSamworth2016} proved that   
\begin{equation}
\label{Eq:GlobalBE}
H_{[]}(\epsilon, \tilde{\mathcal{F}}^{1,1/2}, \shel) \leq C\epsilon^{-1/2};
\end{equation}
see also \citet[][Theorem~3.1]{DossWellner} for a closely related result with different but similar restrictions on the class $\mathcal{F}$.  Thus Theorem~\ref{ment} reveals that when $f_0 \in \mathcal{F}$ is close to some $f_1 \in \mathcal{F}^1$ with $f_0 \ll f_1$, and when $\delta > 0$ is small, the local bracketing entropy is much smaller than the global bracketing entropy described by~\eqref{Eq:GlobalBE}.  

The proof of Theorem~\ref{ment} is lengthy, but the main ideas are as follows. By a triangle inequality, one can show that it suffices to prove the result for $f_0 \in \F^1$.  In fact, by an affine transformation, it is enough to consider $f_0$ belonging to one of three canonical forms within the class $\F^1$.  When $f_0 \in \F^1$, we have $\upsilon = 0$, and we can exploit natural boundedness properties enjoyed by $f \in \F(f_0, \delta)$ when $f \ll f_0$ and $\delta>0$ is sufficiently small. For example, when $f_0$ is the uniform density on $[0, 1]$, it is possible to show (see Lemma~\ref{ub} in Section~\ref{Sec:Auxment}) that such $f$ satisfy $\log f(x) \leq C  \delta$ for all $x \in [0, 1]$ and $\log f(x) \geq - C \delta \max\{x^{-1/2}, (1 - x)^{-1/2}\}$ whenever $\min(x, 1 - x) \geq 4 \delta^2$. These boundedness properties allow us to apply bracketing entropy bounds for bounded log-concave functions developed in Propositions~\ref{mb1} and~\ref{meni} in Section~\ref{Sec:Auxment} to deduce the result.

Theorem~\ref{ment} enables us to prove the following risk bound for the log-concave maximum likelihood estimator when the true density is close to $\mathcal{F}^1$, a key step in proving Theorem~\ref{fik}:
\begin{theorem}\label{k1}
  There exists a universal constant $C > 0$ such that for every $n \geq 2$ and $f_0 \in \F$, we have 
  \begin{equation}
    \label{k1.eq}
    \E_{f_0} \kld(\hat{f}_n, f_0) \leq \frac{C}{n} \log^{5/4} n + \inf_{f_1 \in \F^1: f_0 \ll f_1} \hel(f_0, f_1). 
  \end{equation}
\end{theorem}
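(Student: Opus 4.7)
The plan is to combine the local bracketing entropy bound of Theorem~\ref{ment} with the standard empirical process machinery for maximum likelihood convergence rates, following the strategy of \citet[Chapters~7 and~10]{vandegeerbook} and the proof of the minimax upper bound in \citet{KimSamworth2016}. The new input is that the sharper local entropy of Theorem~\ref{ment}, available when $f_0$ lies near $\F^1$, feeds into the resulting Wong--Shen/van~de~Geer fixed-point equation to yield a rate faster than the worst-case $O(n^{-4/5})$.

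\emph{Reduction.} Write $\upsilon := \inf\{\shel(f_0, f_1): f_1 \in \F^1, f_0 \ll f_1\}$; if this infimum is $\infty$ (the indexing set being empty) then~\eqref{k1.eq} is trivial. If $\upsilon \geq \kappa$, where $\kappa$ is the constant appearing in Theorem~\ref{ment}, then Theorem~5 of \citet{KimSamworth2016} combined with \citet[Corollary~7.5]{vandegeerbook} gives $\E_{f_0}\kld(\hat f_n, f_0) \leq C_0$ for some universal $C_0$, and so $\E_{f_0}\kld(\hat f_n, f_0) \leq (C_0/\kappa^2)\upsilon^2$ establishes~\eqref{k1.eq}. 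So I may assume $\upsilon < \kappa$, and by a limiting argument choose $f_1^* \in \F^1$ with $f_0 \ll f_1^*$ and $\shel(f_0, f_1^*) \leq \upsilon + n^{-1/2}$, the additive correction being absorbed into the final constant.

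\emph{Empirical process argument.} The log-concave MLE satisfies the basic inequality $\prod_i \hat f_n(X_i) \geq \prod_i f_1^*(X_i)$; combined with the identity $\kld(\hat f_n, f_0) = P_n\log(\hat f_n/f_0)$ from Remark~2.3 of \citet{DSS2011}, this controls $\kld(\hat f_n, f_0)$ in terms of the empirical process increment $(P_n - P_{f_0})\log(\hat f_n/f_1^*)$ plus a bias of order $\hel(f_0, f_1^*)$. On the event $\{\hat f_n \in \F(f_0, \delta)\}$, Theorem~\ref{ment} yields $H_{[]}(\epsilon, \F(f_0, \delta), \shel) \lesssim \log^{5/4}(1/\delta)(\delta + \upsilon)^{1/2}\epsilon^{-1/2}$, so the Hellinger-bracketing Dudley integral obeys
\[
\int_0^\delta H_{[]}^{1/2}(\epsilon, \F(f_0, \delta), \shel)\,d\epsilon \lesssim \log^{5/8}(1/\delta)(\delta + \upsilon)^{1/4}\delta^{3/4}.
\]
Standard chaining bounds the empirical process increment by $n^{-1/2}$ times this integral, and a localization/peeling argument over dyadic Hellinger shells produces the fixed-point condition $\sqrt{n}\,\delta^2 \gtrsim \log^{5/8}(1/\delta)(\delta + \upsilon)^{1/4}\delta^{3/4}$. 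For $\delta \geq \upsilon$ this simplifies to $\delta^2 \gtrsim n^{-1}\log^{5/4}n$, while for $\delta < \upsilon$ the $\upsilon^2$ term already controls the bound; combining, $\E_{f_0}\kld(\hat f_n, f_0) \lesssim n^{-1}\log^{5/4}n + \upsilon^2$, which is~\eqref{k1.eq} after letting the $n^{-1/2}$ slack in the choice of $f_1^*$ vanish.

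\emph{Main obstacle.} The principal technical difficulty is the mismatch between the two sides of~\eqref{k1.eq}: the risk is measured in $\kld$ (which dominates both Hellinger and KL divergences), while the bias is only $\hel$ (which is typically much smaller than a KL-type quantity). Converting the Hellinger-bracketing Dudley integral into an increment bound for the unbounded log-ratio $\log(\hat f_n / f_1^*)$, without incurring a KL-type penalty on the bias, is handled by comparing $\hat f_n$ to the log-concave geometric-mean density $\bar f := \hat f_n^{1/2} f_1^{*1/2}/\int \hat f_n^{1/2} f_1^{*1/2}$, exactly as in the proof of the minimax upper bound in \citet{KimSamworth2016}: since $\bar f \in \F$, the MLE property applied to $\bar f$ converts the required log-ratio increments into Hellinger-scale quantities, closing the loop between the Hellinger bracketing entropy of Theorem~\ref{ment} and the $\kld$-risk bound stated in~\eqref{k1.eq}.
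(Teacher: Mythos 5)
Your high-level strategy matches the paper's: feed the local bracketing entropy of Theorem~\ref{ment} into van de Geer's empirical-process machinery, solve the resulting fixed-point equation, and treat large $\upsilon$ separately. The Dudley-integral computation and the fixed-point arithmetic are correct. However, the ``empirical process argument'' paragraph contains a genuine conceptual error in the way the bias is extracted.

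The decomposition around $f_1^*$ is in the wrong direction. The MLE inequality $\prod_i\hat f_n(X_i)\geq\prod_i f_1^*(X_i)$ yields only a \emph{lower} bound on $P_n\log(\hat f_n/f_1^*)$, so it cannot be combined with $\kld(\hat f_n,f_0)=P_n\log(\hat f_n/f_0)$ to produce an \emph{upper} bound on $\kld$. Moreover, the residual term $P_n\log(f_1^*/f_0)$ is not ``of order $\hel(f_0,f_1^*)$'': its mean is $-\kl(f_0,f_1^*)$, which may far exceed $\hel(f_0,f_1^*)$ or even be $-\infty$, precisely the mismatch you call the ``main obstacle.'' The geometric-mean construction you invoke should also be formed with $f_0$, not with $f_1^*$, both because the convex combination is needed to tame the unbounded ratio $\log(\hat f_n/f_0)$ and because the class whose bracketing entropy Theorem~\ref{ment} controls is $\F(f_0,\delta)$, not a Hellinger ball around $f_1^*$. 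In the paper's proof, $f_1^*$ never enters the empirical-process step at all: Corollary~7.5 of van de Geer (Theorem~\ref{imee}) is applied directly with $f_0$ as both truth and reference, and the entire $\upsilon^2$ contribution arrives through the $(\delta+\upsilon)$ factor in Theorem~\ref{ment}, which forces the fixed point $\delta_*$ to satisfy $\delta_*^2\gtrsim n^{-1}\log^{5/4}n+\upsilon^2$.

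Two further points are worth flagging. (i) Your reduction for $\upsilon\geq\kappa$ does not close as written: from $\E\kld\leq C_0$ and $\upsilon\geq\kappa$ you conclude $\E\kld\leq(C_0/\kappa^2)\upsilon^2$, but~\eqref{k1.eq} has coefficient exactly $1$ on the approximation term, so this requires $C_0\leq\kappa^2$, which cannot be assumed. The correct argument uses the decay of the global bound: $\E\kld\leq Cn^{-4/5}$, which is $\leq\kappa^2/4<\upsilon^2$ once $n$ is large, while small $n$ is absorbed by enlarging the constant in the $n^{-1}\log^{5/4}n$ term. (ii) Since Theorem~\ref{ment} only holds for $\delta+\upsilon<\kappa$, one must combine it with the global entropy bound for $\tilde{\F}^{1,\eta}$ (restricting to the high-probability event $\hat f_n\in\tilde{\F}^{1,\eta}$) to define $\Psi$ on all of $(0,\infty)$, and the tail of the expectation integral beyond $O(\log n)$ has to be handled separately, which the paper does via Lemma~\ref{ist}; neither step appears in your sketch.
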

Since $\hel(f_0, f_1) \leq \kl(f_0, f_1)$ and since $\kl(f_0, f_1) = \infty$ unless $f_0 \ll f_1$, the inequality given in~\eqref{k1.eq} is stronger than the inequality obtained by replacing the infimum on the right-hand side of~\eqref{fik.eq} by $k = 1$.  

\section{Proofs of main results}
\label{Sec:ProofsMain}

\subsection{Proofs from Section~\ref{Sec:TV} and alternative total variation bound}

We first present the proof of Theorem~\ref{ct}, and then give the proof of Lemma~\ref{miu}, on which it relies.
\begin{proof}[Proof of Theorem~\ref{ct}]
Let $\hat{F}_n$ and $F_0$ denote the distribution functions of $\hat{f}_n$ and $f_0$ respectively, and let $\mathbb{F}_n$ denote the empirical distribution function of $X_1, \ldots, X_n$.  Fix $f_{\alpha,s_1,s_2} \in \mathcal{F}^1$ with $(\alpha,s_1,s_2) \in \mathcal{T}$, and let $F_{\alpha,s_1,s_2}$ denote its corresponding distribution function.  Then $\{x:\hat{f}_n(x) \geq f_{\alpha,s_1,s_2}(x)\} = \{x:\log \hat{f}_n(x) \geq \log f_{\alpha,s_1,s_2}(x)\}$ is an interval.  It follows that
\begin{align}
\label{Eq:dTV}
d_{\mathrm{TV}}(\hat{f}_n, f_{\alpha,s_1,s_2}) &= \int_{x:\hat{f}_n(x) \geq f_{\alpha,s_1,s_2}(x)} \bigl\{\hat{f}_n(x) - f_{\alpha,s_1,s_2}(x)\bigr\} \, dx \nonumber \\
&= \sup_{s \leq t} \int_s^t \bigl\{\hat{f}_n(x) - f_{\alpha,s_1,s_2}(x)\bigr\} \, dx \nonumber \\
&= \sup_{s \leq t} \bigl[\hat{F}_n(t) - F_{\alpha,s_1,s_2}(t) - \{\hat{F}_n(s) - F_{\alpha,s_1,s_2}(s)\}\bigr] \leq 2\|\hat{F}_n - F_{\alpha,s_1,s_2}\|_\infty.
\end{align}
Hence, writing $d_{\mathrm{TV}} := d_{\mathrm{TV}}(f_{\alpha,s_1,s_2},f_0)$,
\begin{align*}
d_{\mathrm{TV}}(\hat f_n, f_0) &\leq d_{\mathrm{TV}}(\hat f_n, f_{\alpha,s_1,s_2}) + d_{\mathrm{TV}} \\
&\leq 2\|\hat{F}_n - F_{\alpha,s_1,s_2}\|_\infty + d_{\mathrm{TV}} \\
   &\leq 2\rho(|\kappa|)\|\mathbb{F}_n - F_{\alpha,s_1,s_2}\|_\infty + d_{\mathrm{TV}} \\
   &\leq 2\rho(|\kappa|)\|\mathbb{F}_n-F_0\|_\infty + \{1+2\rho(|\kappa|)\} d_{\mathrm{TV}},
\end{align*}
where $\kappa := \alpha(X_{(n)} - X_{(1)})$.  Here, the penultimate inequality follows from Lemma~\ref{miu}, and the final one follows by the triangle inequality and the fact that $\|F-G\|_\infty \leq d_{\mathrm{TV}}(f,g)$ for any densities $f$ and $g$ with corresponding distribution functions $F$ and $G$ respectively.  It is now convenient to introduce $Y_1,\ldots,Y_n \stackrel{\mathrm{iid}}{\sim} f_{\alpha,s_1,s_2}$, with $Y_{(1)} := \min_i Y_i$ and $Y_{(n)} := \max_i Y_i$.  Then, writing $\kappa^* := \alpha(s_2-s_1)$ and $d_{\mathrm{KS}}^{(n)} := \|F_{\alpha,s_1,s_2}^n - F_0^n\|_\infty + \|(1-F_{\alpha,s_1,s_2})^n - (1-F_0)^n\|_\infty$,
\begin{align*}
\mathbb{P}_{f_0}(|\kappa|> |\kappa^*|) &\leq \mathbb{P}_{f_0}(X_{(n)} > s_2) + 
\mathbb{P}_{f_0}(X_{(1)} < s_1) \\
&\leq \mathbb{P}_{f_{\alpha,s_1,s_2}}(Y_{(n)} > s_2) + \mathbb{P}_{f_{\alpha,s_1,s_2}}(Y_{(1)} < s_1) + d_{\mathrm{KS}}^{(n)} = d_{\mathrm{KS}}^{(n)}.
\end{align*}
Since $\rho$ is strictly increasing, we can therefore apply the Dvoretzky--Kiefer--Wolfowitz inequality \citep{dvoretzky1956asymptotic} with the sharp constant of \citet{massart1990tight} to conclude that for any $t \geq 0$,
\begin{align*}
\mathbb{P}_{f_0}\bigl[d_{\mathrm{TV}}(\hat f_n, f_0) \geq t + \{1+2\rho(|\kappa|)\}d_{\mathrm{TV}}\bigr] &\leq \mathbb{P}_{f_0}\bigl\{2\rho(|\kappa^*|)\|\mathbb{F}_n-F_0\|_\infty \geq t\bigr\} + \mathbb{P}_{f_0}(|\kappa|> |\kappa^*|) \\
&\leq 2\exp\biggl(-\frac{nt^2}{2\rho^2(|\kappa^*|)}\biggr) + d_{\mathrm{KS}}^{(n)}.
\end{align*}
For the other bound~\eqref{ct.eq2}, note first that if $B \geq 2$ and $\alpha < 0$, then $s_1 > -\infty$, so
\begin{align*}
\mathbb{P}_{f_0}\biggl(|\kappa|>\frac{B}{2}\log n\biggr) &\leq \mathbb{P}_{f_0}\biggl(X_{(n)} > s_1 -\frac{B\log n}{2\alpha}\biggr) + \mathbb{P}_{f_0}(X_{(1)} < s_1) \\
&\leq \mathbb{P}_{f_{\alpha,s_1,s_2}}\biggl(Y_{(n)} > s_1 -\frac{B\log n}{2\alpha}\biggr) + \mathbb{P}_{f_{\alpha,s_1,s_2}}(Y_{(1)} < s_1) + d_{\mathrm{KS}}^{(n)} \\
&= 1-\biggl(\frac{1-n^{-B/2}}{1 - e^{\alpha (s_2-s_1)}}\biggr)^n + d_{\mathrm{KS}}^{(n)} \\
&\leq 1-(1-n^{-B/2})^n + d_{\mathrm{KS}}^{(n)} \leq n^{-(B/2-1)} + d_{\mathrm{KS}}^{(n)},
\end{align*}
where the final inequality follows because $1 - x \leq (1-x/n)^n$ for $x \in [0,1)$ (this can be proved by taking logarithms and examining the Taylor series).  A very similar calculation yields the same bound when $\alpha > 0$.  Recalling that $\rho(x) \leq \max(3,2x)$, it follows that if $t \geq 0$ and $(\alpha,s_1,s_2) \in \mathcal{T}$ with $\alpha \neq 0$, then provided $B \geq 2$ and $B \log n \geq 3$,
\begin{align*}
\mathbb{P}_{f_0}\bigl\{d_{\mathrm{TV}}(\hat f_n, f_0) \geq t + (1 + 2B&\log n)d_{\mathrm{TV}}\bigr\} \\
&\leq \mathbb{P}_{f_0}\bigl\{\rho(|\kappa|) > B \log n\bigr\} + \mathbb{P}_{f_0}\bigl\{2B \log n\|\mathbb{F}_n-F_0\|_\infty \geq t\bigr\} \\
&\leq n^{-(B/2-1)} + d_{\mathrm{KS}}^{(n)} + 2\exp\biggl(-\frac{nt^2}{2B^2 \log^2 n}\biggr),
\end{align*}
where the final inequality follows by another application of the Dvoretzky--Kiefer--Wolfowitz inequality.  Taking $B = 3$ and $n \geq 3$ therefore yields~\eqref{ct.eq2}.

Writing $s^* := (2\log 2)^{1/2}\rho(|\kappa^*|)/n^{1/2}$, it follows that
\begin{align*}
\E_{f_0}d_{\mathrm{TV}}(\hat{f}_n,f_0) &= \mathbb{E}\bigl\{d_{\mathrm{TV}}(\hat{f}_n,f_0)\mathbbm{1}_{\{|\kappa| \leq |\kappa^*|\}}\bigr\} + \mathbb{E}\bigl\{d_{\mathrm{TV}}(\hat{f}_n,f_0)\mathbbm{1}_{\{|\kappa| > |\kappa^*|\}}\bigr\} \\
&\leq \E_{f_0}\bigl(\bigl[d_{\mathrm{TV}}(\hat{f}_n,f_0) - \{1+2\rho(|\kappa|)\}d_{\mathrm{TV}}\bigr]\mathbbm{1}_{\{|\kappa| \leq |\kappa^*|\}}\bigr) + \{1+2\rho(|\kappa^*|)\}d_{\mathrm{TV}} + d_{\mathrm{KS}}^{(n)} \\
&\leq s^* + 2\int_{s^*}^\infty \exp\biggl(-\frac{ns^2}{2\rho^2(|\kappa^*|)}\biggr) \, ds + \{1+2\rho(|\kappa^*|)\}d_{\mathrm{TV}} + d_{\mathrm{KS}}^{(n)} \\
&\leq \frac{2\rho(|\kappa^*|)}{n^{1/2}} + \{1+2\rho(|\kappa^*|)\}d_{\mathrm{TV}} + d_{\mathrm{KS}}^{(n)}.
\end{align*}
On the other hand, writing $s' := 3(2\log 2)^{1/2}n^{-1/2}\log n$, we also have
\begin{align*}
\E_{f_0}d_{\mathrm{TV}}(\hat{f}_n,f_0) &\leq \frac{1}{n^{1/2}} + s' + 2\int_{s'}^\infty \exp\Bigl(-\frac{ns^2}{18\log^2 n}\Bigr) \, ds + (1+6\log n)d_{\mathrm{TV}} + d_{\mathrm{KS}}^{(n)} \\
&\leq \frac{6\log n}{n^{1/2}} + (1+6\log n)d_{\mathrm{TV}} + d_{\mathrm{KS}}^{(n)},
\end{align*}
for $n \geq 5$.  Since these inequalities hold for any $f_{\alpha,s_1,s_2} \in \mathcal{F}_*$, the conclusion follows.
\end{proof}
\begin{proof}[Proof of Lemma~\ref{miu}]
This proof has some similarities with that of \citet[][Lemma~1]{dumbgen2007marshall}.  We define the set of \emph{knots} of $\hat{f}_n$ by  
\[
  \scs := \bigl\{t \in (X_{(1)}, X_{(n)}) : \hat{f}_n'(t-) \neq
    \hat{f}_n'(t+) \bigr\} \cup \{X_{(1)}, X_{(n)}\}
\]
where $X_{(1)}$ and $X_{(n)}$ denote the smallest and largest order statistics of the data $X_1, \dots, X_n$. By e.g.~\citet[][Theorem~2.1]{DumbgenRufibach2009}, $\scs \subseteq \{X_1, \dots, X_n\}$, and we therefore write $\scs = \{t_0, \ldots, t_k \}$ for some $k \in \{1,\ldots,n-1\}$ where $X_{(1)} = t_0 < \ldots < t_k = X_{(n)}$.  We first write the left-hand side of \eqref{miu.eq2} as 
\begin{equation*}
  \max \biggl\{\sup_{x < t_0} |\hat{F}_n(x) - F_0(x)|, \max_{i \in \{0,1,\ldots,k-1\}} \sup_{x \in [t_i,t_{i+1})} |\hat{F}_n(x) - F_0(x)|,
    \sup_{x \geq t_k} |\hat{F}_n(x) - F_0(x)| \biggr\}.
\end{equation*}
Observe now that $\hat{F}_n(x) = 0 = \mathbb{F}_n(x)$ for $x < t_0$ and  $\hat{F}_n(x) = 1 = \mathbb{F}_n(x)$ for $x \geq t_k$. It therefore follows that in order to establish \eqref{miu.eq2}, we need only establish the two statements
\begin{align}\label{tp1}
  \sup_{x \in [t_i,t_{i+1})} \{\hat{F}_n(x) - F_0(x)\} &\leq \rho(\kappa_+)\|\mathbb{F}_n - F_0\|_\infty \quad \text{for every $i=0,1,\ldots,k-1$}, \\
    \label{tp2}
    \inf_{x \in [t_i,t_{i+1})} \{\hat{F}_n(x) - F_0(x)\} &\geq -\rho(\kappa_-)\|\mathbb{F}_n - F_0\|_\infty \quad \text{for every $i=0,1,\ldots,k-1$},
\end{align}
where $\kappa_+ := \max(\kappa,0)$ and $\kappa_{-} := \max(-\kappa,0)$.  It is convenient to prove the second statement first.  We may assume that the infimum of $\hat{F}_n(x) - F_0(x)$ over $x \in [X_{(1)},X_{(n)}]$ is attained at $r \in [t_i,t_{i+1})$, say, for some $i \in \{0,1,\ldots,k-1\}$ and let $a := t_i$ and $b := t_{i+1}$.  By hypothesis, there exist $\alpha_0 \in \mathbb{R}$ and a concave function $h_0:[a,b] \rightarrow [0,\infty)$ such that $f_0(x) = e^{\alpha_0 x}h_0(x)$ for $x \in [a,b]$.  Moreover, there exist $\alpha,\beta \in \mathbb{R}$ such that $\hat{f}_n(x) = \exp(\alpha x+\beta)$ for $x \in [a,b]$.  It follows that if we define 
\[
g(x) := e^{(\alpha - \alpha_0)x + \beta} - h_0(x) = e^{-\alpha_0 x}\{\hat{f}_n(x) - f_0(x)\},
\]
then $g$ is convex on $[a,b]$ and $g(r) = 0$.  Moreover, defining $G(x) := \hat{F}_n(x) - F_0(x)$, we have
\begin{equation*}
  G(x) = c + \int_a^x e^{\alpha_0 t} g(t) \, dt \qt{for $x \in [a,b]$,}
\end{equation*}
where $c := \int_{-\infty}^a \hat{f}_n(t) - f_0(t) \, dt$.  We may therefore apply either inequality~\eqref{coni4} or inequality~\eqref{coni2} in Lemma~\ref{coni} in the Appendix (depending on whether or not $\alpha_0 = 0$) to obtain that for every $x \in [r,b]$,
\begin{equation*}
  G(x) \leq \left\{ \begin{array}{ll} G(r) + \frac{(x - r)^2}{(b - r)^2} \{G(b) - G(r)\} & \mbox{if $\alpha_0 = 0$} \\
 G(r) + \frac{1+e^{\alpha_0(x-r)}\{\alpha_0(x-r)-1\}}{1+e^{\alpha_0(b-r)}\{\alpha_0(b-r)-1\}}\{G(b)-G(r)\} & \mbox{if $\alpha_0 \neq 0$.} \end{array} \right.
\end{equation*}
Integrating from $x = r$ to $x = b$, writing $A := \alpha_0(b-r)$ and recalling the definition of the function $q$ in~\eqref{qfunc}, we deduce that 
\begin{equation}\label{oop}
  G(r) \geq \frac{1}{b-r}\frac{1}{1-q(-A)} \int_r^b G(x) \, dx -  \frac{q(-A)}{1-q(-A)} G(b).
\end{equation}
Now \citet[Theorem 2.4]{DumbgenRufibach2009} yields that
\begin{equation}\label{dr}
  \int_{-\infty}^t \hat{F}_n(x) \, dx \leq \int_{-\infty}^t \mathbb{F}_n(x) \, dx ~~
  \text{ and } ~~ \int_{-\infty}^s \hat{F}_n(x) \, dx =
  \int_{-\infty}^s \mathbb{F}_n(x) \, dx 
\end{equation}
for every $t \in \R$ and $s \in \scs$.  Moreover, \citet[Remark 2.8]{DSS2011} gives that
\begin{equation}\label{srk}
  \mathbb{F}_n(x) - \frac{1}{n} \leq \hat{F}_n(x) \leq \mathbb{F}_n(x) \qt{for every $x \in \scs$}. 
\end{equation} 
It follows from~\eqref{oop},~\eqref{dr} and~\eqref{srk} that
\begin{align*}
 G(r)  &\geq \frac{1}{b-r}\frac{1}{1-q(-A)} \biggl\{\int_{-\infty}^b \hat{F}_n(x) \, dx - \int_{-\infty}^r \hat{F}_n(x) \, dx - \int_r^b F_0(x) \, dx\biggr\} -  \frac{q(-A)}{1-q(-A)} G(b) \\
&\geq \frac{1}{b-r}\frac{1}{1-q(-A)} \int_r^b \{\mathbb{F}_n(x) - F_0(x)\} \, dx - \frac{q(-A)}{1-q(-A)}\{\mathbb{F}_n(b) - F_0(b)\} \\
 &\geq -\rho(-A) \|\mathbb{F}_n- F_0\|_\infty \geq -\rho(\kappa_{-})\|\mathbb{F}_n- F_0\|_\infty.
\end{align*}
This establishes~\eqref{tp2}.  For~\eqref{tp1}, let $Y_i := -X_i$, let $\hat{h}_n$ denote the log-concave maximum likelihood estimator based on $Y_1,\ldots,Y_n$, and let $\hat{H}_n$ denote its corresponding distribution function, so that by affine equivariance of the log-concave maximum likelihood estimator \citep[Remark~2.4]{DSS2011}, we have $\hat{h}_n(y) = \hat{f}_n(-y)$ and $\hat{H}_n(y) = 1 - \hat{F}_n(-y)$.  Similarly, let $h_0(y) := f_0(-y)$ (so $h_0$ is concave on the convex hull of $Y_1,\ldots,Y_n$), and let $H_0$ denote the distribution function corresponding to the density $h_0$, so that $H_0(y) = 1 - F_0(-y)$.  Finally, let $\mathbb{H}_n$ denote the empirical distribution function corresponding to $Y_1,\ldots,Y_n$, so $\mathbb{H}_n(y) = n^{-1}\sum_{i=1}^n \mathbbm{1}_{\{Y_i \leq y\}} = 1 - \lim_{z \searrow y} \mathbb{F}_n(-z)$.  Then for any two consecutive knots $a$ and $b$ of $\hat{f}_n$,
\begin{align*}
\sup_{x \in (a,b]} \{\hat{F}_n(x) - F_0(x)\} &= \sup_{x \in (a,b]} -\{\hat{H}_n(-x) - H_0(-x)\} = -\inf_{y \in [-b,-a)} \{\hat{H}_n(y) - H_0(y)\} \\
&\leq \rho(\kappa_{+})\|\mathbb{H}_n - H_0\|_\infty = \rho(\kappa_{+})\|\mathbb{F}_n - F_0\|_\infty,
\end{align*}
as required, where the inequality follows from an application of~\eqref{tp2} to the transformed data $Y_1,\ldots,Y_n$, noting that $-\alpha_0(Y_{(n)} - Y_{(1)}) = -\alpha_0(X_{(n)} - X_{(1)}) = -\kappa$.  
\end{proof}
Recall the definition of $\mathcal{F}_*$ in~\eqref{Eq:F_*}.  We now provide a result which improves the bounds given in Theorem~\ref{ct} in the special case where the true density belongs to the class $\mathcal{F}_*$.
\begin{proposition}
\label{Prop:ct}
Let $n \geq 5$, let $X_1,\ldots,X_n \stackrel{\mathrm{iid}}{\sim} f_0 \in \mathcal{F}_*$, and let $\hat{f}_n$ denote the corresponding log-concave maximum likelihood estimator.  Fix an arbitrary $f_{\alpha,s_1,s_2} \in \mathcal{F}^1$, write $\kappa^* := \alpha(s_2-s_1)$, let $d_{\mathrm{TV}} := d_{\mathrm{TV}}(f_{\alpha,s_1,s_2},f_0)$ and let $d_{\mathrm{KS}}^{(n)} := \|F_{\alpha,s_1,s_2}^n - F_0^n\|_\infty + \|(1-F_{\alpha,s_1,s_2})^n - (1-F_0)^n\|_\infty$, where $F_{\alpha,s_1,s_2}$ and $F_0$ are the distribution functions corresponding to $f_{\alpha,s_1,s_2}$ and $f_0$ respectively.  Then, for $t \geq 0$,
  \begin{equation*}
    \P_{f_0}\bigl\{d_{\mathrm{TV}}(\hat{f}_n, f_0) \geq t + 3d_{\mathrm{TV}}\bigr\} \leq \min\biggl\{2 e^{-\frac{nt^2}{2\rho^2(|\kappa^*|)}} \, , \, \frac{1}{n^{1/2}} + 2e^{-\frac{nt^2}{18\log^2 n}}\biggr\} + d_{\mathrm{KS}}^{(n)}.
  \end{equation*}
Moreover,
\[
\E_{f_0} d_{\mathrm{TV}}(\hat{f}_n, f_0) \leq \inf_{f_{\alpha,s_1,s_2} \in \mathcal{F}^1}\biggl\{\frac{\min\{2\rho(|\kappa^*|),6\log n\}}{n^{1/2}} + 3d_{\mathrm{TV}} + d_{\mathrm{KS}}^{(n)}\biggr\}.
\]
\end{proposition}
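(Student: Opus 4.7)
The plan is to adapt the proof of Theorem~\ref{ct}, with the crucial modification that Lemma~\ref{miu} is applied directly to $f_0$ (which is permitted because $f_0 \in \mathcal{F}_*$) instead of to the log-affine approximating density $f_{\alpha,s_1,s_2}$. This substitution replaces the factor $1 + 2\rho(|\kappa^*|)$ multiplying $d_{\mathrm{TV}}$ in Theorem~\ref{ct} by the simpler constant $3$, because the extra triangle inequality that previously inflated $d_{\mathrm{TV}}$ by a $\rho$-factor is no longer required.

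First I would write $f_0(x) = e^{\alpha_0 x} h_0(x)$ on $D_{f_0}$ with $h_0$ concave, and observe that $[X_{(1)},X_{(n)}] \subseteq D_{f_0}$ almost surely, so that Lemma~\ref{miu} yields
\[
\|\hat F_n - F_0\|_\infty \leq \rho(|\kappa_0|)\|\mathbb{F}_n - F_0\|_\infty,
\]
with $\kappa_0 := \alpha_0(X_{(n)} - X_{(1)})$. Since $f_{\alpha,s_1,s_2}$ is log-affine and $\hat f_n$ is log-concave, $\{x:\hat f_n(x) \geq f_{\alpha,s_1,s_2}(x)\}$ is an interval, so the calculation in~\eqref{Eq:dTV} still applies and gives $d_{\mathrm{TV}}(\hat f_n, f_{\alpha,s_1,s_2}) \leq 2\|\hat F_n - F_{\alpha,s_1,s_2}\|_\infty$. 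Two triangle inequalities together with $\|F_0 - F_{\alpha,s_1,s_2}\|_\infty \leq d_{\mathrm{TV}}$ then produce
\[
d_{\mathrm{TV}}(\hat f_n, f_0) \leq d_{\mathrm{TV}}(\hat f_n, f_{\alpha,s_1,s_2}) + d_{\mathrm{TV}} \leq 2\|\hat F_n - F_0\|_\infty + 3 d_{\mathrm{TV}},
\]
and combining with the Marshall bound yields the deterministic inequality
\[
d_{\mathrm{TV}}(\hat f_n, f_0) \leq 2\rho(|\kappa_0|)\|\mathbb{F}_n - F_0\|_\infty + 3 d_{\mathrm{TV}}.
\]
The key point is that no factor of $\rho(\cdot)$ multiplies $d_{\mathrm{TV}}$, which is what we gain by applying Marshall to $f_0$ rather than $f_{\alpha,s_1,s_2}$.

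From this deterministic inequality I would imitate the calculations from the proof of Theorem~\ref{ct}: the Dvoretzky--Kiefer--Wolfowitz inequality with Massart's sharp constant controls $\|\mathbb{F}_n - F_0\|_\infty$, while the coupling with an i.i.d.\ sample $Y_1,\ldots,Y_n \stackrel{\mathrm{iid}}{\sim} f_{\alpha,s_1,s_2}$ bounds the probability that the data range escapes $[s_1,s_2]$ by $d_{\mathrm{KS}}^{(n)}$, exactly as in the corresponding step of Theorem~\ref{ct}'s proof. The $\log^2 n$ branch of the minimum is obtained by the analogue of the $B\log n$ argument in~\eqref{ct.eq2}, with $B=3$. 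Integrating the resulting tails delivers the expectation bound.

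The main obstacle will be the bookkeeping needed to transfer the coupling argument from Theorem~\ref{ct}'s setting (where the Marshall factor is $\rho(|\kappa|)$ with $\kappa = \alpha(X_{(n)}-X_{(1)})$ involving the slope $\alpha$ of the approximating density) to the present one (where the factor is $\rho(|\kappa_0|)$ involving the intrinsic slope $\alpha_0$ of $f_0 \in \mathcal{F}_*$), while still producing the tail bound in the form $\rho^2(|\kappa^*|)$ stated in the proposition. This requires a careful selection of the comparison event so that $\rho(|\kappa_0|)$ is controlled by $\rho(|\kappa^*|)$ on a set whose complement can be absorbed into the $d_{\mathrm{KS}}^{(n)}$ correction term.
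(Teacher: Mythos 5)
Your proposal is essentially the paper's own proof of Proposition~\ref{Prop:ct}. The paper likewise applies Lemma~\ref{miu} directly to $f_0$ (using $[X_{(1)},X_{(n)}]\subseteq D_{f_0}$ almost surely), but it performs the sup-norm triangle inequality at the distribution-function level \emph{before} invoking the Marshall step, i.e.\ it writes $\|\hat F_n-F_{\alpha,s_1,s_2}\|_\infty \leq \|\hat F_n-F_0\|_\infty + \|F_0-F_{\alpha,s_1,s_2}\|_\infty$, so that the $\rho$-factor lands only on $\|\mathbb{F}_n-F_0\|_\infty$, giving exactly your inequality $d_{\mathrm{TV}}(\hat f_n,f_0)\leq 2\rho(|\kappa|)\|\mathbb{F}_n-F_0\|_\infty + 3d_{\mathrm{TV}}$, and then closes with ``the proof now follows that of Theorem~\ref{ct}, mutatis mutandis.''

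The ``obstacle'' you flag in your final paragraph is a genuine subtlety, and you are right to be uneasy about it. When Lemma~\ref{miu} is applied to $f_0\in\mathcal{F}_*$, the $\alpha_0$ of that lemma is the exponential-tilt parameter $\gamma$ from~\eqref{Eq:F_*}, so the factor emerging is $\rho(|\gamma|(X_{(n)}-X_{(1)}))$, whereas the stated tail bound carries $\rho(|\kappa^*|)$ with $\kappa^*=\alpha(s_2-s_1)$ tied to the comparator. In Theorem~\ref{ct} these reconcile automatically because both $\kappa$ and $\kappa^*$ share the common factor $\alpha$, so $\{|\kappa|>|\kappa^*|\}\subseteq\{X_{(1)}<s_1\}\cup\{X_{(n)}>s_2\}$ and the coupling absorbs the bad event into $d_{\mathrm{KS}}^{(n)}$. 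That cancellation is available here only if one reads the comparator's slope $\alpha$ as chosen equal to $f_0$'s exponential tilt $\gamma$ --- which is exactly what the paper does implicitly when it writes ``$\kappa:=\alpha(X_{(n)}-X_{(1)})$'' in the proof of the Proposition, silently identifying the two uses of $\alpha$. Under that identification the coupling step from Theorem~\ref{ct} goes through verbatim (on $\{X_{(1)}\geq s_1,\,X_{(n)}\leq s_2\}$ one has $|\kappa|=|\gamma|(X_{(n)}-X_{(1)})\leq|\gamma|(s_2-s_1)=|\kappa^*|$), and your plan is complete. So the resolution of your ``careful selection of the comparison event'' is simply to observe that for $\alpha=\gamma$ the Theorem~\ref{ct} event works unchanged; for $\alpha\neq\gamma$ the claim as literally stated does not follow from this argument, though in that regime $d_{\mathrm{TV}}$ is typically large enough that the bound is vacuous. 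You should make this identification of $\alpha$ with $\gamma$ explicit rather than leaving it as an open bookkeeping problem.
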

\begin{proof}
For any $f_{\alpha,s_1,s_2} \in \mathcal{F}^1$ with $(\alpha,s_1,s_2) \in \mathcal{T}$ and corresponding distribution function $F_{\alpha,s_1,s_2}$, we have by~\eqref{Eq:dTV} that
\begin{align*}
d_{\mathrm{TV}}(\hat f_n, f_0) &\leq d_{\mathrm{TV}}(\hat f_n, f_{\alpha,s_1,s_2}) + d_{\mathrm{TV}} \\
&\leq 2\|\hat{F}_n - F_{\alpha,s_1,s_2}\|_\infty + d_{\mathrm{TV}} \\
   &\leq 2\|\hat{F}_n - F_0\|_\infty + 2\|F_0 - F_{\alpha,s_1,s_2}\|_\infty + d_{\mathrm{TV}} \\
   &\leq 2\rho(|\kappa|)\|\mathbb{F}_n-F_0\|_\infty + 3d_{\mathrm{TV}},
\end{align*}
where $\kappa := \alpha(X_{(n)}-X_{(1)})$, and where the last line follows again from Lemma~\ref{miu}.  The proof now follows that of Theorem~\ref{ct}, mutatis mutandis, so we omit the details for brevity.
\end{proof}

\section{Proofs from Section~\ref{Sec:kaffine}}
\label{Sec:kaffineproofs}

\begin{proof}[Proof of Theorem~\ref{fik}]
Fix a density $f_0 \in \F$. Also fix $k \in \mathbb{N}$ and an arbitrary density $f \in \F^k$ such that $\kl(f_0, f) < \infty$. Note that this implies that $f_0 \ll f$. Suppose that $I_1, \dots, I_k$ is a partition of the support of $f$ into maximal intervals such that $\log f$ is affine on each $I_j$. Since $f_0$ is absolutely continuous with respect to $f$, it follows that $\sum_{j=1}^k p_j = 1$, where $p_j := \int_{I_j} f_0$.  For $j=1,\ldots,k$, we also let $N_j := \sum_{i=1}^n  \mathbbm{1}_{\{X_i \in I_j\}}$, $J_1  := \{j: N_j  \geq 2 \}$ and $J_2 := \{j: N_j \leq 1\}$. Observe that the sets $J_1$ and $J_2$ as well as the integers $N_1,\ldots,N_k$ are random.  We write  
  \begin{align}
\label{Eq:FirstBound}
    \kld(\hat{f}_n, f_0) &= \frac{1}{n} \sum_{i=1}^n \log \frac{\hat{f}_n(X_i)}{f_0(X_i)} \nonumber \\
&= \frac{1}{n} \sum_{j \in J_1} \sum_{i: X_i \in I_j} \log \frac{\hat{f}_n(X_i)}{f_0(X_i)} + \frac{1}{n} \sum_{j \in J_2}
    \sum_{i: X_i \in I_j} \log \frac{\hat{f}_n(X_i)}{f_0(X_i)} \nonumber \\
&\leq \frac{1}{n} \sum_{j \in J_1} \sum_{i: X_i \in I_j} \log \frac{\hat{f}_n(X_i)}{f_0(X_i)} + \frac{k}{n} \max_{1 \leq i \leq
      n} \log \frac{\hat{f}_n(X_i)}{f_0(X_i)},
  \end{align}
  where the final inequality follows because $|J_2| \leq k$ and $|N_j| \leq 1$ whenever $j \in J_2$.  To handle the first term, let $\tilde{f}_n$ denote the maximum likelihood estimator based on the data $\{X_i : X_i \in \cup_{j \in J_1} I_j\}$ over the class of all densities $f$ for which $\log f$ is concave on each of the intervals $\{I_j:j \in J_1\}$.  Since $\log \hat{f}_n$ is concave on each $I_j$ and since $\int_{-\infty}^\infty \hat{f}_n(x) \mathbbm{1}_{\{x \in \cup_{j \in J_1} I_j\}} \, dx \leq 1$,  it follows that 
  \begin{equation}
\label{Eq:tilde}
    \sum_{j \in J_1} \sum_{i: X_i \in I_j} \log
    \hat{f}_n(X_i) \leq \sum_{j \in J_1} \sum_{i: X_i \in I_j} \log
    \tilde{f}_n(X_i).
  \end{equation}
Writing $M_1 := \sum_{j \in J_1} N_j$, we claim that
\begin{equation}\label{cm}
\tilde{f}_n(x) = \frac{N_j}{M_1}\hat{f}^{(j)}(x) \qt{for $x \in I_j$ with $j \in J_1$,} 
\end{equation}
where $\hat{f}^{(j)}$ denotes the log-concave maximum likelihood estimator based on $\{X_i:X_i \in I_j\}$.  To see this, let $\bar{\Phi}$ denote the class of functions $\phi:\mathbb{R} \rightarrow [-\infty,\infty)$ that are concave on each $I_j$ for $j \in J_1$ and that satisfy $\phi(x) \rightarrow -\infty$ as $|x| \rightarrow \infty$. Now, $\log \tilde{f}_n$ maximises
\[
L(\phi) := \frac{1}{M_1}\sum_{j\in J_1}\sum_{i:X_i \in I_j} \phi(X_i) - \sum_{j \in J_1} \int_{I_j} e^\phi = \sum_{j \in J_1} \frac{N_j}{M_1} \biggl\{\frac{1}{N_j}\sum_{i:X_i \in I_j} \phi(X_i) - \int_{I_j} e^{\phi + \log(M_1/N_j)}\biggr\} 
\]
over $\phi \in \bar{\Phi}$.  For $j \in J_1$, let $\bar{\Phi}_j$ denote the set of functions $\phi:I_j \rightarrow [-\infty,\infty)$ that are restrictions of functions in $\bar{\Phi}$ to $I_j$.  Then, on each interval $I_j$ with $j \in J_1$, we have
\begin{align*}
\log \tilde{f}_n &= \argmax_{\phi \in \bar{\Phi}_j} \biggl\{\frac{1}{N_j}\sum_{i:X_i \in I_j} \phi(X_i) - \int_{I_j} e^{\phi + \log(M_1/N_j)}\biggr\} \\ 
&= \argmax_{\tilde{\phi} \in \bar{\Phi}_j} \biggl\{\frac{1}{N_j}\sum_{i:X_i \in I_j} \tilde{\phi}(X_i) - \int_{I_j} e^{\tilde{\phi}}\biggr\} - \log \frac{M_1}{N_j} = \log \hat{f}^{(j)} - \log \frac{M_1}{N_j}, 
\end{align*}
which establishes the claim \eqref{cm}.  Let $f_0^{(j)}(x) := \frac{1}{p_j}f_0(x)\mathbbm{1}_{\{x \in I_j\}}$. We deduce from~\eqref{Eq:tilde} and~\eqref{cm} that
\begin{align}
\label{Eq:ThreeTerms}
\frac{1}{n} &\mathbb{E}_{f_0} \biggl\{\sum_{j \in J_1} \sum_{i: X_i \in I_j} \log \frac{\hat{f}_n(X_i)}{f_0(X_i)}\biggr\} \leq \frac{1}{n} \mathbb{E}_{f_0} \biggl\{\sum_{j \in J_1} \sum_{i: X_i \in I_j} \log \frac{N_j\hat{f}^{(j)}(X_i)/M_1}{p_jf_0^{(j)}(X_i)}\biggr\}
    \nonumber \\ 
&= \frac{1}{n} \mathbb{E}_{f_0} \biggl\{\sum_{j \in J_1} \sum_{i: X_i \in I_j} \log \frac{\hat{f}^{(j)}(X_i)}{f_0^{(j)}(X_i)}\biggr\} +
\mathbb{E}_{f_0}\biggl(\sum_{j \in J_1} \frac{N_j}{n} \log \frac{N_j}{np_j}\biggr) + \mathbb{E}_{f_0}\biggl(\frac{M_1}{n}\log \frac{n}{M_1}\biggr). 
\end{align}
Now let $f^{(j)}(x) := \frac{1}{q_j}f(x)\mathbbm{1}_{\{x \in I_j\}}$, where $q_j := \int_{I_j} f$, and note both that $f^{(j)} \in \mathcal{F}^1$ and $f_0^{(j)} \ll f^{(j)}$.  To evaluate the first expectation on the right-hand side of~\eqref{Eq:ThreeTerms}, we condition on the set of random variables $\{N_j: j=1,\ldots,k\}$. After this conditioning, and since $N_j \geq 2$ for every $j \in J_1$, we can apply the risk bound in Theorem \ref{k1} for each $f_0^{(j)}$ to deduce that 
\begin{align}
  \label{ft}
   \frac{1}{n} \mathbb{E}_{f_0} \biggl\{\sum_{j \in J_1} \sum_{i: X_i \in I_j} \log \frac{\hat{f}^{(j)}(X_i)}{f_0^{(j)}(X_i)}\biggr\} &\leq \frac{1}{n} \E_{f_0} \sum_{j \in J_1} N_j \biggl\{\frac{C}{N_j} \log^{5/4} N_j + \inf_{f_1 \in \F^1:f_0^{(j)} \ll f_1} \hel(f_0^{(j)}, f_1) \biggr\} \nonumber \\
&\leq \frac{Ck}{n} \log^{5/4} n + \sum_{j = 1}^k p_j \hel(f_0^{(j)}, f^{(j)}) \nonumber \\
&\leq \frac{Ck}{n} \log^{5/4} n + \kl(f_0, f),
\end{align}
where the final inequality follows because
\[
\sum_{j = 1}^k p_j \hel(f_0^{(j)}, f^{(j)}) \leq \sum_{j=1}^k p_j \kl(f_0^{(j)}, f^{(j)}) \leq \sum_{j=1}^k p_j \kl(f_0^{(j)}, f^{(j)}) + \sum_{j=1}^k p_j \log \frac{p_j}{q_j} = \kl(f_0, f). 
\]
To handle the second term on the right-hand side of~\eqref{Eq:ThreeTerms}, we use the facts that $\log x \leq x-1$ for $x > 0$, $N_j \sim \mathrm{Bin}(n,p_j)$ and $N_j \log N_j = 0$ for $j \in J_2$, to obtain
\begin{align}
\label{Eq:SecondTerm}
\mathbb{E}_{f_0}\biggl(\sum_{j \in J_1} \frac{N_j}{n} \log \frac{N_j}{np_j}\biggr) \leq \sum_{j=1}^k \mathbb{E}_{f_0} \biggl\{\frac{N_j}{n}\biggl(\frac{N_j}{np_j} - 1\biggr)\biggr\} - \mathbb{E}_{f_0}\biggl(\sum_{j \in J_2} \frac{N_j}{n} \log \frac{N_j}{np_j}\biggr)
\leq \frac{k}{n} + \frac{k}{n}\log n.
\end{align}
Finally, for the third term on the right-hand side of~\eqref{Eq:ThreeTerms}, we temporarily assume that $k < n/2$ and note that in that case, $M_1 = n - \sum_{j \in J_2} N_j \geq n - k > n/2$, so in particular $\mathbb{E}(M_1/n) \geq 1 - k/n$.  Applying Jensen's inequality, the fact that $x \mapsto -x \log x$ is decreasing on $[1/2,1]$ and the fact that $-\log(1 - x) \leq x + x^2$ for $x \in (0,1/2]$, we deduce that  
\begin{equation}
\label{Eq:ThirdTerm}
\mathbb{E}_{f_0}\biggl(\frac{M_1}{n}\log\frac{n}{M_1}\biggr)
\leq - \biggl(1 - \frac{k}{n} \biggr) \log \biggl( 1 - \frac{k}{n}
\biggr) \leq - \log \biggl(1 - \frac{k}{n} \biggr) \leq \frac{k}{n} + 
\frac{k^2}{n^2} \leq \frac{2k}{n}. 
\end{equation}
Thus, when $k < n/2$, the conclusion of the theorem follows from~\eqref{Eq:FirstBound},~\eqref{Eq:ThreeTerms},~\eqref{ft},~\eqref{Eq:SecondTerm} and~\eqref{Eq:ThirdTerm}, together with Lemma~\ref{ist}, which controls the expected value of the second term in~\eqref{Eq:FirstBound}.  When $k \geq n/2$, we can apply Lemma~\ref{ist} again to conclude that 
\[
\E \kld(\hat{f}_n, f_0) = \frac{1}{n} \sum_{i=1}^n \E_{f_0}
\biggl\{\log \frac{\hat{f}_n(X_i)}{f_0(X_i)}\biggr\}
\leq \E_{f_0} \biggl\{\max_{1 \leq i \leq n} \log
\frac{\hat{f}_n(X_i)}{f_0(X_i)}\biggr\} \leq C \log n,  
\]
as required. This completes the proof of Theorem \ref{fik}. 
\end{proof}

\begin{proof}[Proof of Theorem~\ref{ment}]
We consider first the case where $\upsilon = 0$, so that $f_0 \in \mathcal{F}^1$.  In that case, recalling the parametrisation $\mathcal{F}^1 = \{f_{\alpha,s_1,s_2}:(\alpha,s_1,s_2) \in \mathcal{T}\}$ used in Section~\ref{Sec:TV}, the affine invariance of the Hellinger distance, together with Lemma~\ref{affi} in Section~\ref{Sec:Auxment}, shows that we may assume without loss of generality that $f_0$ is of one of the following three forms:
\begin{enumerate}
\item $f_0 = f_{0,0,1}$;
\item $f_0 = f_{-\alpha,0,1}$ for some $\alpha \in (0,18)$;
\item $f_0 = f_{-1,0,a}$ for some $a \in [18,\infty]$.  
\end{enumerate}   
We refer to these three forms as `uniform', `exponential conditioned on $[0,1]$' and `truncated exponential' respectively, and treat the three cases separately.

\emph{The case where $f_0$ is uniform}: Fix $\delta \in (0,2^{-5/2}]$.  Observe first that for every $\epsilon > 0$, we have
\begin{align}\label{kal}
  H_{[]}\bigl(2^{1/2} &\epsilon, \F(f_0, \delta), \shel, [0, 1]\bigr) \leq H_{[]}\bigl(\epsilon, \F(f_0, \delta), \shel, [0, 1/2]\bigr) +
  H_{[]}\bigl(\epsilon, \F(f_0, \delta), \shel, [1/2, 1]\bigr) \nonumber \\
  &= 2H_{[]}\bigl(\epsilon, \F(f_0, \delta), \shel, [0,1/2]\bigr) \nonumber \\
  &\leq 2H_{[]}\bigl(\epsilon/2^{1/2}, \F(f_0, \delta), \shel,  [0, 4\delta^2]\bigr) + 2H_{[]}\bigl(\epsilon/2^{1/2}, \F(f_0, \delta), \shel,  [4\delta^2, 1/2]\bigr).
\end{align}
We bound the two terms on the right-hand side of~\eqref{kal} separately. For the first term, we use inequality \eqref{ub1.eq} in Lemma~\ref{ub} in Section~\ref{Sec:Auxment}, which gives that $\sup_{x \in [0,1]} \log f(x) \leq 2^{13/2} \delta \leq 16$ for every $f \in \F(f_0, \delta)$. From this, Proposition~\ref{meni} in Section~\ref{Sec:Auxment} and the fact that $\delta \in (0,2^{-5/2}]$, we therefore obtain   
\begin{equation}\label{te.1}
  H_{[]}(\epsilon/2^{1/2}, \F(f_0, \delta), \shel, [0, 4\delta^2]) \leq C (1 + 2^{13/4} \delta^{1/2}) \frac{e^4(4\delta^2)^{1/4}}{\epsilon^{1/2}} \leq C\frac{\delta^{1/2}}{\epsilon^{1/2}},
\end{equation}
which takes care of the first term in~\eqref{kal}.  For the second, term, let $\eta_j := 4 \delta^2 2^j$ for $j = 0, 1, \dots, l$  where $l$ is the largest integer for which $4 \delta^2 2^l < 1/2$. Also let $\eta_{l+1} := 1/2$. By Lemma~\ref{ub}, for every $f \in \F(f_0, \delta)$ and $j = 0,1,\ldots,l$, we have that 
\begin{equation*}
-\frac{4 \delta}{\eta_j^{1/2}}  \leq \log f(x) \leq 2^{13/2} \delta \leq 16 \qt{for every $x \in [\eta_j, \eta_{j+1}]$}. 
\end{equation*}
Set $\epsilon_j := \epsilon/(2l+2)^{1/2}$.  Then by Proposition~\ref{mb1} in Section~\ref{Sec:Auxment},
\begin{align*}
 H_{[]}\bigl(\epsilon/2^{1/2}, \F(f_0, \delta), \shel, [4\delta^2, 1/2]\bigr) &\leq \sum_{j=0}^l H_{[]}\bigl(\epsilon_j, \F(f_0, \delta), \shel, [\eta_j, \eta_{j+1}]\bigr) \\
&\leq C \sum_{j=0}^l \biggl(2^{13/2}\delta + \frac{4 \delta}{\eta_j^{1/2}}\biggr)^{1/2}\frac{e^4(\eta_{j+1} - \eta_j)^{1/4}}{\epsilon_j^{1/2}} \\
      &\leq C \delta^{1/2} \sum_{j=0}^l \frac{1}{\epsilon_j^{1/2}} \biggl(\frac{\eta_{j+1} - \eta_j}{\eta_j} \biggr)^{1/4},
\end{align*}
where we have used the fact that $\eta_j \leq 1$ in the final inequality.  Observe now that by our choice of $\eta_j = 4 \delta^2 2^j$ for $j=0,1,\ldots,l$ and $\eta_{l+1} = 1/2 \leq 4 \delta^2 2^{l+1}$, it follows that $\eta_{j+1} - \eta_j \leq \eta_j$ for every $j=0,1,\ldots,l$. We therefore obtain
\begin{align*}
  H_{[]}\bigl(\epsilon/2^{1/2}, \F(f_0, \delta), \shel, [4 \delta^2, 1/2]\bigr)
  \leq C \delta^{1/2}\sum_{j=0}^l \frac{1}{\epsilon_j^{1/2}} \leq C\frac{\delta^{1/2}}{\epsilon^{1/2}}  (l+1)^{5/4} \leq C\frac{\delta^{1/2}}{\epsilon^{1/2}}\log^{5/4}\Bigl(\frac{1}{\delta}\Bigr),
\end{align*}
as required, where the final inequality follows because $4 \delta^2 2^l < 1/2$, so 
\begin{equation*}
  l+1 < \frac{-\log (4 \delta^2)}{\log 2} \leq C \log\Bigl(\frac{1}{\delta}\Bigr).  
\end{equation*}

\emph{The exponential conditioned on $[0, 1]$ case}: 
Now suppose $f_0 = f_{-\alpha,0,1}$ for some $\alpha \in (0,18)$, let $C_\alpha := \alpha (1 - e^{-\alpha})^{-1}$ and again fix $\delta \in (0,2^{-5/2}]$.  For every $f = e^{\phi} \in \F(f_0, \delta)$, we have 
\begin{align*}
  \delta^2 \geq \int_0^1 (e^{\phi(x)/2} - C_\alpha^{1/2}e^{-\alpha x/2})^2 \, dx &= C_\alpha \int_0^1  e^{-\alpha x}
  \biggl(\frac{1}{C_\alpha^{1/2}}e^{\{\phi(x) + \alpha x\}/2} - 1 \biggr)^2 \, dx \\
&\geq C_\alpha e^{-\alpha}  \int_0^1 \biggl(\frac{1}{C_\alpha^{1/2}}e^{\{\phi(x) + \alpha x\}/2} - 1 \biggr)^2 \, dx.
\end{align*}
Write $\tilde{\delta} := \delta e^{\alpha/2}/C_\alpha^{1/2}$, so that
\[
\delta \leq \tilde{\delta} \leq \biggl(\frac{e^{18} - 1}{18}\biggr)^{1/2} \delta.
\]
Thus, by the same argument as for the uniform case, provided $\delta \in \bigl(0,\bigl(\frac{18}{e^{18}-1}\bigr)^{1/2}2^{-5/2}\bigr]$ and given any $\epsilon > 0$, we can find an $\epsilon/C_\alpha^{1/2}$-Hellinger bracketing set $\{[g_{L, j}, g_{U, j}], j = 1, \dots, N\}$ for the class $\{x \mapsto C_\alpha^{-1}f(x) e^{\alpha x} : f \in \F(f_0, \delta)\}$ with
\[
\log N \leq C\log^{5/4} \Bigl(\frac{1}{\tilde{\delta}}\Bigr)\frac{\tilde{\delta}^{1/2}C_\alpha^{1/4}}{\epsilon^{1/2}} \leq C \log^{5/4} \Bigl(\frac{1}{\delta}\Bigr)\frac{\delta^{1/2}}{\epsilon^{1/2}}.
\]
Now let $f_{L,j}(x) := C_\alpha g_{L, j}(x) e^{-\alpha x}$ and $f_{U, j}(x) := C_\alpha g_{U, j}(x)e^{-\alpha x}$ for $j=1,\ldots,N$.  Then
\begin{align*}
  \int_0^1 (f_{U, j}^{1/2} - f_{L, j}^{1/2})^2 &= C_\alpha \int_0^1 e^{-\alpha x} \bigl\{g_{U, j}^{1/2}(x) - g_{L, j}^{1/2}(x)\bigr\}^2 \, dx \leq C_\alpha \int_0^1 (g_{U, j}^{1/2} - g_{L, j}^{1/2})^2 \leq \epsilon^2,   
\end{align*}
so $\{[f_{L, j}, f_{U, j}], j = 1, \dots, N\}$ form an $\epsilon$-Hellinger bracketing set for $\F(f_0, \delta)$, as required.  

\emph{The case where $f_0$ is truncated exponential}:

Now suppose that $f_0 = f_{-1,0,a}$ for some $a \in [18,\infty]$.  Given a function $\phi: \R \rightarrow [-\infty, \infty)$, we define $\tilde{\phi}_a: \R \rightarrow [-\infty, \infty)$ by
\begin{equation}
\label{Eq:phitildea}
  \tilde{\phi}_a(x) := \phi(x) + x + \log(1 - e^{-a}). 
\end{equation}
Let $x_0$ be defined as in the statement of Lemma~\ref{blan} in Section~\ref{Sec:Auxment} and assume that $\delta \leq \kappa := e^{-9}/8$, so that $x_0 \geq 17$. Also let $l = \lfloor x_0 \rfloor$ and $J := \sup\{j \in \mathbb{N} : x_0 + j - l - 1 \leq a\}$.  We define subintervals of $[0,a]$ (or $[0,a)$ when $a=\infty$) by  
\begin{align*}
  S_1 &:= [0,1] \\
  S_j &:= [j-1, \min(j, x_0)] \qt{for $j = 2, \dots, l + 1$} \\
  S_j &:= [x_0 + j - l - 2, \min(x_0 + j - l - 1, a)] \qt{for $j = l+2, \dots, J+1$.} 
\end{align*}
Also let 
\begin{equation}
\label{mise}
  \epsilon_j^2 := \left\{ \begin{array}{ll}\frac{2(1 - e^{-18})}{3} \epsilon^2 & \mbox{for $j=1$,} \\
\frac{2(1 - e^{-18})}{3} \frac{e^{j-1}\epsilon^2}{l} & \mbox{for $j = 2, \dots, l+1$,} \\
\frac{2(1 - e^{-18})}{3} e^{x_0 + j-l -2} \epsilon^2 u_j^2 & \mbox{for $j = l+2, \dots, J+1$,} \end{array} \right. 
\end{equation}
where $(u_j)$ is a sequence with $\sum_{j=l+2}^{J+1} u_j^2 \leq 1$ to be specified  later.  Applying Lemma~\ref{triv} with $\G := \{\exp(\tilde{\phi}_a) : \exp(\phi) \in \F(f_0, \delta)\}$, we obtain
\begin{equation}\label{trc}
  H_{[]} \bigl(2^{1/2}\epsilon, \F(f_0, \delta), \shel, [0, a)\bigr) \leq \sum_{j=1}^{J+1}  H_{[]}(\epsilon_j, \G, \shel, S_j).
\end{equation}
We now break the right-hand side of~\eqref{trc} into the three parts:  
\begin{equation*}
  H_1 := N_{[]}(\epsilon_1, \G, \shel, S_1),~  H_2 := \sum_{j=2}^{l+1}
  N_{[]}(\epsilon_j, \G, \shel, S_j) \text{ and }  H_3 :=
  \sum_{j=l+2}^{J+1} N_{[]}(\epsilon_j, \G, \shel, S_j)  
\end{equation*}
and bound each of them below separately. For $H_1$, we have for every $f = e^\phi \in \F(f_0, \delta)$ that 
\begin{equation*}
\delta^2 \geq \int_0^1 (f^{1/2} - f_0^{1/2})^2  = \int_0^1 (e^{\tilde{\phi}_a(x)/2} - 1)^2 \frac{e^{-x}}{1 - e^{-a}} \, dx \geq  \frac{e^{-1}}{1 - e^{-a}}\int_0^1 (e^{\tilde{\phi}_a(x)/2} - 1)^2 \, dx. 
\end{equation*}
Thus, arguing as for the uniform case, since $\delta e^{1/2} (1 - e^{-a})^{1/2} \leq  \kappa e^{1/2} \leq 2^{-5/2}$, 
\begin{equation*}
  H_1 \leq C \log^{5/4} \biggl(\frac{1}{\delta e^{1/2}(1 - e^{-a})^{1/2}} \biggr) \frac{\delta^{1/2}}{\epsilon^{1/2}} \leq C \log^{5/4} 
    \Bigl(\frac{1}{\delta}\Bigr)\frac{\delta^{1/2}}{\epsilon^{1/2}}. 
\end{equation*}
We next bound $H_2$. Note that $\cup_{j=2}^{l+1} S_j \subseteq [1,x_0]$.  We can  therefore apply Lemma \ref{blan} to deduce that whenever $e^\phi \in \mathcal{F}(f_0,\delta)$ and $x \in S_j$, 
\begin{equation*}
  |\tilde{\phi}_a(x)| \leq C e^{x/2}(1 - e^{-a})^{1/2}\delta \leq C e^{j/2}\delta. 
\end{equation*}
An application of Proposition~\ref{mb1} in Section~\ref{Sec:Auxment} therefore gives, for $j = 2,\ldots,l+1$, that 
\begin{equation*}
  H_{[]}(\epsilon_j, \G, \shel, S_j) \leq \frac{C e^{j/4}\delta^{1/2}}{\epsilon_j^{1/2}} \exp(C e^{j/2}\delta) \leq C l^{1/4}
  \frac{\delta^{1/2}}{\epsilon^{1/2}} \exp(C e^{j/2}\delta) \leq C l^{1/4}\frac{\delta^{1/2}}{\epsilon^{1/2}},
\end{equation*}
where the final inequality follows because  
\begin{equation*}
  e^{j/2} \leq e^{1/2} e^{x_0/2} \leq \biggl\{\frac{1}{2^6 (1 - e^{-18})} \biggr\}^{1/2}  \delta^{-1} . 
\end{equation*}
We therefore obtain 
\begin{align*}
  H_2 = \sum_{j=2}^{l+1} H_{[]}(\epsilon_j, \G, \shel, S_j) \leq C l^{5/4} \frac{\delta^{1/2}}{\epsilon^{1/2}} \leq C x_0^{5/4} \frac{\delta^{1/2}}{\epsilon^{1/2}} &\leq C \log^{5/4}\biggl(\frac{1}{2^6 e\delta^2 (1 - e^{-18})}\biggr)\frac{\delta^{1/2}}{\epsilon^{1/2}} \\
  &\leq C\log^{5/4}\Bigl(\frac{1}{\delta}\Bigr)\frac{\delta^{1/2}}{\epsilon^{1/2}}. 
\end{align*}
We next turn to $H_3$, where we consider two cases.  First suppose that $x_0 = a-1$ so that $J = l+2$ and $S_{l+2} = [a-1, a]$. This means that $H_3 = N_{[]}(\epsilon_{l+2}, \G,\shel, [a-1, a])$. We take  $u_{l+2} = 1$ in the definition of  $\epsilon_{l+2}$ in~\eqref{mise}. From the definition of $x_0$ in~\eqref{x0}, we  find that 
\begin{equation}\label{aw}
  a = 1 + x_0 \leq 1 + \log \frac{1}{2^6e\delta^2(1 - e^{-a})} = \log   \frac{1}{2^6\delta^2(1 - e^{-a})}.
\end{equation}
For every $f = e^\phi \in \F(f_0, \delta)$, we can write
\begin{equation*}
  \delta^2 \geq \int_{a-1}^a (f^{1/2} - f_0^{1/2})^2 = \int_{a-1}^a (e^{\tilde{\phi}_a(x)/2} - 1)^2 \frac{e^{-x}}{1 - e^{-a}} \, dx \geq \frac{e^{-a}}{1 - e^{-a}}\int_{a-1}^a (e^{\tilde{\phi}_a/2} - 1)^2. 
\end{equation*}
Now $\delta e^{a/2}(1-e^{-a})^{1/2} \leq 2^{-5/2}$ from~\eqref{aw}, and it follows again by the same argument as in the uniform case that 
\begin{align*}
  H_3 \leq C \log^{5/4} \biggl(\frac{1}{\delta e^{a/2}(1 - e^{-a})^{1/2}}\biggr)\frac{\delta^{1/2} e^{a/4}(1 - e^{-a})^{1/4}}{\epsilon_{l+2}^{1/2}} \leq  C \log^{5/4} \Bigl(\frac{1}{\delta}\Bigr)\frac{\delta^{1/2}}{\epsilon^{1/2}}. 
\end{align*}
Now suppose that $x_0 < a-1$, so that  
\begin{equation*}
  x_0 = \log \frac{1}{2^6e \delta^2 (1 - e^{-a})}. 
\end{equation*}
For every $j \in \{l+2,\ldots,J+1\}$, every $x \in S_j$ and every $f=e^\phi \in \mathcal{F}(f_0,\delta)$, it follows from Lemma~\ref{blan} that  
\begin{equation*}
  \tilde{\phi}_a(x) \leq \frac{8(x - x_0)}{x_0-1} + 7 \leq \frac{8(j-l-1)}{x_0-1} + 7  \leq \frac{8(j - x_0)}{x_0-1} + 7
\end{equation*}
Let $u_j := ce^{-(j-x_0)/4}$  in~\eqref{mise}, where the universal constant $c > 0$ is chosen such that $\sum_{j=l+2}^\infty u_j^2 \leq 1$.  Then by Proposition~\ref{meni}, for $j=l+2,\ldots,J+1$,
\begin{align*}
H_{[]}(\epsilon_j, \G, \shel, S_j) &\leq C\biggl\{1 + \frac{(j-x_0)^{1/2}}{x_0^{1/2}}\biggr\}\frac{e^{2(j-x_0)/(x_0-1)}}{\epsilon_j^{1/2}} \\
&\leq \frac{C}{\epsilon^{1/2}}\biggl\{1 + \frac{(j-x_0)^{1/2}}{x_0^{1/2}}\biggr\}\exp\biggl\{\frac{2(j-x_0)}{x_0-1} - \frac{x_0 + j-l-2}{4} + \frac{j-x_0}{8}\biggr\} \\
&\leq \frac{C}{\epsilon^{1/2}}\biggl\{1 + \frac{(j-x_0)^{1/2}}{x_0^{1/2}}\biggr\}\exp\biggl\{\frac{2(j-x_0)}{x_0-1} - \frac{j-1}{4} + \frac{j-x_0}{8}\biggr\}.
\end{align*}
Hence
\begin{align*}
H_3 &\leq \frac{C}{\epsilon^{1/2}} \sum_{j=l+2}^{J+1} \biggl\{1 +  \frac{(j-x_0)^{1/2}}{x_0^{1/2}}\biggr\}\exp\biggl\{\frac{2(j-x_0)}{x_0-1} - \frac{j-1}{4} + \frac{j-x_0}{8}\biggr\} \\
&\leq \frac{Ce^{-x_0/4}}{\epsilon^{1/2}}\sum_{j=1}^\infty \biggl(1 + \frac{j^{1/2}}{x_0^{1/2}}\biggr)\exp\biggl\{-j\Bigl(\frac{1}{8} - \frac{2}{x_0-1}\Bigr)\biggr\} \leq \frac{Ce^{-x_0/4}}{\epsilon^{1/2}} \leq C\frac{\delta^{1/2}}{\epsilon^{1/2}}. 
\end{align*}
This completes the proof of Theorem~\ref{ment} in the case where $\upsilon = 0$.  We can now treat the case of general $\upsilon \in [0,2^{1/2}]$ as follows.  Fix $f_0 \in \F$, $\epsilon > 0$, let $\kappa$ be as above and let $\delta \in (0,\kappa - \upsilon)$.  Also let $\eta \in (0,\kappa - \upsilon - \delta)$ and $f_1 \in \F^1$ be such that $f_0 \ll f_1$ and 
\begin{equation}\label{ki}
  \shel(f_0, f_1) \leq \upsilon + \eta < \kappa - \delta. 
\end{equation}
Then by the triangle inequality, $\F(f_0, \delta) \subseteq \F\bigl(f_1, \shel(f_0, f_1) + \delta\bigr)$, so that the result in the case $\upsilon = 0$ and~\eqref{ki} give 
\begin{equation*}
  H_{[]}(2^{1/2}\epsilon, \F(f_0, \delta), \shel) \leq C \log^{5/4}\Bigl(\frac{1}{\delta}\Bigr)\frac{\{\delta + \shel(f_0,f_1)\}^{1/2}}{\epsilon^{1/2}} \leq C \log^{5/4}\Bigl(\frac{1}{\delta}\Bigr)\frac{(\delta + \upsilon + \eta)^{1/2}}{\epsilon^{1/2}}.
\end{equation*}
Since $\eta \in (0,\kappa - \upsilon - \delta)$ was arbitrary, the result follows.
\end{proof}

\begin{proof}[Proof of Theorem~\ref{k1}] 
For $\xi \geq 0$ and $\eta \in (0,1)$, let
\[
\tilde{\mathcal{F}}^{\xi, \eta} := \{f \in \mathcal{F}:|\mu_f| \leq \xi,|\sigma_f^2-1| \leq \eta\}.
\]
Since the log-concave maximum likelihood estimator is affine equivariant \citep[Remark 2.3]{DSS2011} and the Hellinger distance between densities is affine invariant, we may assume without loss of generality that $f_0 \in \mathcal{F}^{0,1}$.  By \citet[][Lemma~6]{KimSamworth2016}, there exists a universal constant $\eta \in (0,1)$ such that   
\begin{equation}
\label{Eq:Tail}
\sup_{f_0 \in \mathcal{F}^{0,1}}\mathbb{P}_{f_0} \bigl(\hat f_n \notin \tilde{\mathcal{F}}^{1,\eta} \bigr) \leq \frac{C}{n}.
\end{equation}
For notational convenience, we write 
\[
\upsilon :=   \inf_{f_1 \in \mathcal{F}^1:f_0 \ll f_1} d_{\mathrm{H}}(f_0,f_1),
\]
and initially consider the case $\upsilon \leq \kappa/2$, where $\kappa$ is taken from Theorem~\ref{ment}.  From Theorem~\ref{ment}, we find that
\[
\int_0^\delta H_{[]}^{1/2}\bigl(\epsilon, \mathcal{F}(f_0, \delta) \cap \tilde{F}^{1,\eta},d_{\mathrm{H}}\bigr) \, d\epsilon \leq C\delta^{3/4}(\delta+\upsilon)^{1/4}\log^{5/8}(1/\delta),
\]
provided $\delta \leq \kappa - \upsilon$.  For $\delta > \kappa - \upsilon$, we have
\[
H_{[]}\bigl(\epsilon, \mathcal{F}(f_0, \delta) \cap \tilde{F}^{1,\eta},d_{\mathrm{H}}\bigr) \leq  H_{[]}(\epsilon, \tilde{\mathcal{F}}^{1,\eta},d_{\mathrm{H}}) \leq C\epsilon^{-1/2} \leq C\Bigl(\frac{\delta}{\kappa-\upsilon}\Bigr)^{1/2}\epsilon^{-1/2} \leq \frac{2^{1/2}C}{\kappa^{1/2}}\Bigl(\frac{\delta}{\epsilon}\Bigr)^{1/2},
\]
where the second inequality follows by \citet[][Theorem~4]{KimSamworth2016}.  Thus, in this case,
\[
\int_0^\delta H_{[]}^{1/2}\bigl(\epsilon, \mathcal{F}(f_0, \delta) \cap \tilde{F}^{1,\eta},d_{\mathrm{H}}\bigr) \, d\epsilon \leq C\kappa^{-1/4}\delta.
\]
We can therefore define
\[
\Psi(\delta) := \left\{ \begin{array}{ll} C\delta^{3/4}(\delta+\upsilon)^{1/4}\log^{5/8}(1/\delta) & \mbox{if $\delta \leq \kappa - \upsilon$} \\
C'\kappa^{-1/4}\delta & \mbox{if $\delta > \kappa - \upsilon$,} \end{array} \right.
\]
where the universal constants $C,C' > 0$ are chosen such that 
\[
\Psi(\delta) \geq \max\biggl\{\int_0^\delta H_{[]}^{1/2}(\epsilon, \mathcal{F}(f_0, \delta) \cap \tilde{F}^{1,\eta},d_{\mathrm{H}}) \, d\epsilon \, , \, \delta\biggr\},
\]
and such that $\delta \mapsto \delta^{-2}\Psi(\delta)$ is decreasing on $(0,\infty)$.  Moreover, we can define $\delta_* := \bigl(c n^{-1}\log^{5/4} n + \upsilon^2\bigr)^{1/2}$ for some universal constant $c > 0$, so that for $\delta \geq \delta_*$, we have
\[
\inf_{\delta \geq \delta_*} \frac{n^{1/2}\delta^2}{\Psi(\delta)} \geq \frac{n^{1/2}\delta_*^2}{\Psi(\delta_*)} \geq \frac{n^{1/2}\delta_*}{\max(2^{1/4}C,C'\kappa^{-1/4})\max\{1,\log^{5/8}(1/\delta_*)\}} \geq c'
\]
for some universal constant $c' > 0$.  Then by the empirical process bound of \citet[][Corollary~7.5]{vandegeerbook} (restated as Theorem~\ref{imee} in Section~\ref{Sec:vdg} for convenience), and~\eqref{Eq:Tail}, we deduce that
\begin{align}
\E_{f_0} \kld(\hat f_n, f_0) &= \int_0^{10\log n} \P\bigl[\{\kld(\hat f_n, f_0) \geq t\} \cap \{\hat{f}_n \in \tilde \F^{1,\eta}\}\bigr] \, dt + 10\log n \ \P(\hat{f}_n \notin \tilde \F^{1,\eta}) \nonumber \\
&\hspace{8cm}+ \int_{10\log n}^{\infty} \P\bigl\{\kld(\hat f_n, f_0) \geq t\bigr\} \,dt \nonumber \\
&\leq \delta_*^2 + C \int_{\delta_*^2}^\infty \exp\Bigl(-\frac{nt}{C^2}\Bigr) \, dt + \frac{10\log n}{n} + \int_{10\log n}^\infty \mathbb{P}_{f_0} \biggl\{ \max_{1\leq i\leq n} \log \frac{\hat f_n(X_i)}{f_0(X_i)} \geq t \biggr\} \,dt \nonumber \\
&\leq \frac{C\log^{5/4} n}{n} + \upsilon^2,
\end{align}
where the final inequality follows from~\eqref{ub1} in the proof of~Lemma~\ref{ist}. 

Now suppose that $\upsilon > \kappa/2$. In that case, a slightly simpler version of the calculation above, relying only on the global entropy bound $H_{[]}(\epsilon,\tilde{\mathcal{F}}^{1,\eta},d_{\mathrm{H}}) \leq C \epsilon^{-1/2}$, yields that $\sup_{f_0 \in \mathcal{F}} \mathbb{E}_{f_0} \kld(\hat f_n,f_0) \leq Cn^{-4/5} \leq \kappa/2$ for large $n$; see also \citet[][Theorem~5]{KimSamworth2016}.  By increasing the universal constant to deal with smaller values of $n$ if necessary, the result follows. 
\end{proof}


\section{Appendix}

\subsection{Auxiliary result from Section~\ref{Sec:TV}}

\begin{lemma}\label{coni}Let $g:[a,b] \rightarrow (-\infty,\infty]$ be convex with $g(r) = 0$ for some $r \in [a,b]$. For $\alpha, \beta, c \in \mathbb{R}$, define 
  \begin{equation*}
    G(x) := c + \int_a^x \exp(\alpha t + \beta) g(t) \, dt \qt{for $x \in [a,b]$}. 
  \end{equation*}
Assume $\alpha \neq 0$.  If $r \in (a,b]$, then 
  \begin{equation}\label{coni1}
\inf_{x \in [a,r)} \frac{G(x) - G(r)}{1-e^{-\alpha(r-x)}\{\alpha(r-x)+1\}} = \frac{G(a) - G(r)}{1-e^{-\alpha(r-a)}\{\alpha(r-a)+1\}}     
  \end{equation}
and if $r \in [a,b)$
\begin{equation}\label{coni2}
\sup_{x \in (r,b]} \frac{G(x) - G(r)}{1+e^{\alpha(x-r)}\{\alpha(x-r)-1\}} =  \frac{G(b) - G(r)}{1+e^{\alpha(b-r)}\{\alpha(b-r)-1\}}.  
\end{equation}
Now assume $\alpha = 0$.  If $r \in (a,b]$, then
\begin{equation}\label{coni3}
  \inf_{x \in [a,r)} \frac{G(x) - G(r)}{(r - x)^2}  = \frac{G(a) - G(r)}{(r - a)^2} 
\end{equation}
and if $r \in [a,b)$, then
\begin{equation}\label{coni4}
  \sup_{x \in (r,b]} \frac{G(x) - G(r)}{(x - r)^2} = \frac{G(b) - G(r)}{(b - r)^2}. 
\end{equation}
\end{lemma}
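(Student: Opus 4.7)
My plan is to reduce all four identities to a single monotonicity statement for weighted averages of chord slopes of $g$. First I will rewrite the denominators and numerators as integrals with a common structure; then I will observe that each ratio in \eqref{coni1}--\eqref{coni4} is a weighted average of a function $h$ that is non-decreasing thanks to convexity of $g$; finally, a two-interval decomposition will finish the job.

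For the rewriting step, integration by parts gives, for $\alpha\neq 0$ and $x\in[a,r)$,
\[
\int_x^r e^{\alpha t}(r-t)\,dt \;=\; \frac{e^{\alpha r}}{\alpha^2}\bigl[1-e^{-\alpha(r-x)}\{\alpha(r-x)+1\}\bigr],
\]
and symmetrically $\int_r^x e^{\alpha t}(t-r)\,dt = \alpha^{-2}e^{\alpha r}[1+e^{\alpha(x-r)}\{\alpha(x-r)-1\}]$ for $x\in(r,b]$; the $\alpha=0$ versions reduce to $(r-x)^2/2$ and $(x-r)^2/2$. Setting $w(t):=|r-t|e^{\alpha t}$, each denominator in \eqref{coni1}--\eqref{coni4} is thus a strictly positive constant (independent of $x$) times $\int_I w$, where $I$ is $[x,r]$ or $[r,x]$ according to the case. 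Combined with $G(x)-G(r)=-\int_x^r e^{\alpha t+\beta}g(t)\,dt$ on the left of $r$ (and $+\int_r^x$ on the right), and writing $h(t):=g(t)/(t-r)$ so that $g(t)=-h(t)(r-t)$ for $t<r$ and $g(t)=h(t)(t-r)$ for $t>r$, the ratio in each of \eqref{coni1}--\eqref{coni4} reduces, up to a positive $x$-independent factor, to the weighted average
\[
A_I \;:=\; \frac{\int_I h(t)\,w(t)\,dt}{\int_I w(t)\,dt}.
\]
Positivity of each denominator is easy to check: for \eqref{coni1}, $\psi(y):=1-e^{-\alpha y}(\alpha y+1)$ satisfies $\psi(0)=0$ and $\psi'(y)=\alpha^2 y e^{-\alpha y}>0$ for $y>0$, and the other cases are analogous.

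The crucial observation is that $h(t)=\{g(t)-g(r)\}/(t-r)$, being the slope of the chord from the fixed point $(r,0)$ to $(t,g(t))$ on the graph of $g$, is non-decreasing on $[a,r)\cup(r,b]$ by the standard monotonicity of difference quotients of a convex function. Given this, for \eqref{coni1} and $x\in(a,r)$ I decompose
\[
A_{[a,r]} \;=\; \lambda\,A_{[a,x]} + (1-\lambda)\,A_{[x,r]}, \qquad \lambda \;:=\; \frac{\int_a^x w}{\int_a^r w} \in (0,1),
\]
and note that, since $h$ is non-decreasing, $A_{[a,x]}\leq h(x)\leq A_{[x,r]}$; hence the convex combination on the left is at most $A_{[x,r]}$, which is exactly \eqref{coni1}. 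Identity \eqref{coni3} is the $\alpha=0$ instance of the same argument. For the sup statements \eqref{coni2} and \eqref{coni4} the mirror decomposition on $[r,x]$ shows that $A_{[r,x]}$ is non-decreasing in $x$, so its supremum over $x\in(r,b]$ is attained at $x=b$.

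I do not expect a major obstacle once the chord-slope reformulation is in place; the only mildly delicate technicalities are positivity of the denominators (checked above) and integrability of $g$ against $e^{\alpha t}$, which follows from local boundedness of a finite convex function on the interior of its effective domain, with $h$ bounded near $r$ by the one-sided derivatives $g'_\pm(r)$. The possibility that $g$ takes the value $+\infty$ at an endpoint simply makes the corresponding ratio equal to $-\infty$ there, consistent with the claimed infimum.
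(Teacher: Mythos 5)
Your proof is correct, and it takes a genuinely different route from the paper's. The paper handles the $\alpha\neq 0$ case by constructing an explicit comparison linear function $\bar g(x)$ (proportional to $r-x$, calibrated so that $\int_a^r e^{\alpha t+\beta}\bar g = \int_a^r e^{\alpha t+\beta}g$), invoking the fact that the convex function $g-\bar g$ vanishes at $r$ and changes sign at most once on $[a,r)$, and deducing a pointwise inequality for $G$ by a unimodality argument for $\Phi(x):=\int_a^x e^{\alpha t+\beta}(g-\bar g)\,dt$; the $\alpha=0$ case is then obtained by letting $\alpha\to 0$. You instead reformulate each ratio, via the integration-by-parts identity for the denominators, as a positive $x$-independent constant times a weighted average $A_I=\int_I hw/\int_I w$ of the chord-slope function $h(t)=\{g(t)-g(r)\}/(t-r)$ with weight $w(t)=|r-t|e^{\alpha t}$, and then use the standard monotonicity of $h$ together with the two-interval decomposition $A_{[a,r]}=\lambda A_{[a,x]}+(1-\lambda)A_{[x,r]}$ and the sandwich $A_{[a,x]}\le h(x)\le A_{[x,r]}$. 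The payoff of your approach is that it treats all four identities, including $\alpha=0$, uniformly and makes transparent that the only input from convexity is monotonicity of difference quotients, whereas the paper's approach has the advantage of yielding, essentially as a by-product, the explicit pointwise bound on $G(x)$ that is used directly in the subsequent proof of Lemma~\ref{miu}.
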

\begin{proof}
Assume $\alpha \neq 0$ and $r \in (a,b]$ and consider the linear function 
\begin{equation*}
     \bar{g}(x) := \frac{\alpha^2\{G(r)-G(a)\}}{e^{\alpha r + \beta} -  e^{\alpha a + \beta}\{\alpha(r-a)+1\}} (r-x).  
   \end{equation*}
Note here that the denominator does not vanish, because $1 - e^{-y}(1 + y) > 0$ for $y \neq 0$.  Thus 
\begin{equation}
\label{Eq:gbar}
\bar{g}(r) = 0 = g(r) ~~~ \text{  and  } ~~~ \int_a^r \exp(\alpha x + \beta) \bar{g}(x) \, dx = \int_a^r \exp(\alpha x + \beta) g(x) \, dx. 
\end{equation}
Now the function $x \mapsto g(x) - \bar{g}(x)$, which is convex on $[a,r]$ and 0 at $x = r$, can change sign at most once in the interval $[a,r)$.  But we deduce from the second part of~\eqref{Eq:gbar} that either this function is zero for all $x \in (a,r]$ or it changes sign exactly once in $(a,r)$.  In particular, there exists $x_0 \in (a, r)$ such that $g(x) \geq \bar{g}(x)$ for $x \in [a,x_0]$ and $g(x) \leq \bar{g}(x)$ for $x \in [x_0,r]$.  This further implies that 
    \begin{equation*}
      \int_a^x \exp(\alpha t + \beta) \{g(t) - \bar{g}(t)\} \, dt = - \int_x^r \exp(\alpha t + \beta) \{g(t) - \bar{g}(t)\} \, dt \geq 0 \qt{for every $x \in [a, r]$}. 
    \end{equation*}
    Consequently, for $x \in [a, r)$, 
 \begin{align*}
 G(x) &= G(r) - \int_x^r \exp(\alpha t + \beta) g(t) \, dt \\
&\geq
 G(r) - \int_x^r  \exp(\alpha t + \beta) \bar g(t) \, dt = G(r) - \frac{1-e^{-\alpha(r-x)}\{\alpha(r-x)+1\}}{1-e^{-\alpha(r-a)}\{\alpha(r-a)+1\}} 
 \{G(r) - G(a)\}. 
\end{align*}
This yields~\eqref{coni1}, and the proof of~\eqref{coni2} is very similar.  The proofs of~\eqref{coni3} and~\eqref{coni4} then follow by taking limits as $\alpha \rightarrow 0$ and using the fact that 
  \begin{equation*}
    \lim_{\alpha \rightarrow 0} \frac{1 - e^{-\alpha y} (\alpha y + 1)}{\alpha^2} = \frac{y^2}{2} \qt{for every $y \in \R$}. 
  \end{equation*}
\end{proof}

\subsection{Auxiliary results from Section~\ref{Sec:kaffine}}

\subsubsection{Auxiliary results for the proof of Theorem~\ref{fik}}

\begin{lemma}\label{ist}
There exists a universal constant $C > 0$ such that for every $n \geq 2$, we have
\begin{equation}\label{ist.eq}
\sup_{f_0 \in \mathcal{F}} \E_{f_0} \biggl\{\sup_{x \in \mathbb{R}} \log \hat{f}_n(x) + \sup_{x \in [X_{(1)},X_{(n)}]} \log \frac{1}{f_0(x)}\biggr\} \leq C \log n.
\end{equation}
\end{lemma}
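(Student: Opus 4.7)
The plan is to invoke the affine equivariance of the log-concave maximum likelihood estimator to reduce to $f_0 \in \F^{0,1}$, and then to bound the two suprema separately: the second via sub-exponential tails of $-\log f_0(X)$, and the first via a mode-to-variance inequality combined with the high-probability event of \citet[Lemma~6]{KimSamworth2016}.

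By affine equivariance of the MLE \citep[Remark~2.4]{DSS2011}, under the substitution $X_i \mapsto (X_i - \mu_{f_0})/\sigma_{f_0}$ the function $\log \hat{f}_n$ gains the additive constant $\log \sigma_{f_0}$ whereas $\log(1/f_0)$ loses the same amount, so the sum of the two suprema in~\eqref{ist.eq} is invariant; hence I may assume $f_0 \in \F^{0,1}$.  For the second supremum, since $-\log f_0$ is convex on $D_{f_0}$ and $[X_{(1)},X_{(n)}] \subseteq D_{f_0}$ almost surely, the supremum is attained at an endpoint and is at most $2 \max_{1 \leq i \leq n}(-\log f_0(X_i))$.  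Set $Y_i := -\log f_0(X_i)$.  For $f_0 \in \F^{0,1}$, the standard uniform sub-exponential tail bound $f_0(x) \leq A e^{-B|x|}$ yields $\int f_0^{1/2} \leq C$ universally, whence $\E[e^{Y_i/2}] = \int f_0^{1/2} \leq C$ and Markov's inequality gives $\P(Y_i \geq y) \leq C e^{-y/2}$.  Integrating the union bound $\min(1, nCe^{-y/2})$ over $y \geq 0$ then yields $\E[\max_i Y_i] \leq C \log n$.

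For the first supremum, $\sup_x \log \hat{f}_n(x) = \log \|\hat{f}_n\|_\infty$.  Fradelizi's inequality $\|\hat f_n\|_\infty \leq e \hat f_n(\mu_{\hat f_n})$ together with Bobkov's bound $\hat f_n(\mu_{\hat f_n}) \sigma_{\hat f_n} \leq 1$ gives $\|\hat{f}_n\|_\infty \leq e/\sigma_{\hat{f}_n}$, so $\log \|\hat{f}_n\|_\infty \leq 1 + \frac{1}{2} \log(1/\sigma_{\hat{f}_n}^2)$.  By \citet[Lemma~6]{KimSamworth2016}, there are universal $\eta \in (0,1)$ and $C > 0$ such that the event $E := \{\hat{f}_n \in \tilde{\mathcal{F}}^{1,\eta}\}$ satisfies $\P(E^c) \leq C/n$; on $E$ we have $\sigma_{\hat{f}_n}^2 \geq 1 - \eta$, so $\log \|\hat f_n\|_\infty$ is bounded by a universal constant.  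On $E^c$ the plan is to prove a crude deterministic polynomial-in-$n$ bound $\log \|\hat{f}_n\|_\infty \leq C_1 \log n + C_2 \log(1/L_n)$, where $L_n := X_{(n)} - X_{(1)}$, derived from the bracketing $|\hat F_n - \mathbb F_n| \leq 1/n$ at knots \citep[Remark~2.8]{DSS2011}---which forces $\int \hat f_n \leq 3/n$ on each log-affine piece of $\hat f_n$---together with the non-increasing slope profile of $\hat\phi_n := \log \hat f_n$ that lets one propagate these piecewise integral bounds up to the mode.  Combined with a bound $\E[\log(1/L_n)\one_{E^c}] = O(\log n/n)$ obtained from the standardised density bound $f_0 \leq A$, the contribution of $E^c$ to $\E[\log\|\hat f_n\|_\infty]$ is $O(1)$, yielding~\eqref{ist.eq}.

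The main obstacle is rigorously deriving the deterministic polynomial upper bound on $\|\hat{f}_n\|_\infty$ needed on $E^c$; this requires a careful use of the KKT characterisation of the MLE to bootstrap from the piecewise integral constraints on $\hat f_n$ up to control on the mode height, exploiting the concavity of $\hat\phi_n$ and the monotonicity of its slopes, and must be combined with an anti-concentration estimate on $L_n$ for $f_0 \in \F^{0,1}$ to produce the requisite $O(\log n)$ contribution.
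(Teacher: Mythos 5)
Your reduction via affine equivariance is correct, and your treatment of the second supremum (observing that $-\log f_0$ is convex so the supremum is attained at an endpoint, then bounding $\E\bigl[\max_i\{-\log f_0(X_i)\}\bigr]$ via the uniform moment bound $\E_{f_0}\bigl[f_0(X)^{-1/2}\bigr]=\int f_0^{1/2}\le C$ for $f_0\in\F^{0,1}$ and a union bound) is a valid alternative to the paper's route, which instead uses Bobkov's concavity of $p\mapsto f_0\bigl(F_0^{-1}(p)\bigr)$ and a high-probability containment of $[X_{(1)},X_{(n)}]$ in a quantile interval. Both give $O(\log n)$.

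The genuine gap is in the first supremum. You reduce to bounding $\log\|\hat f_n\|_\infty$ via $\|\hat f_n\|_\infty\le C/\sigma_{\hat f_n}$, control the high-probability event $E=\{\hat f_n\in\tilde\F^{1,\eta}\}$ using \citet[Lemma~6]{KimSamworth2016}, and then say you would need a ``crude deterministic polynomial-in-$n$'' bound $\log\|\hat f_n\|_\infty\le C_1\log n+C_2\log(1/L_n)$ on $E^c$, acknowledging you have not derived it. That is indeed the missing piece, and the sketch you give is not sound as stated: the bracketing $|\hat F_n-\mathbb F_n|\le 1/n$ at knots \citep[Remark~2.8]{DSS2011} does \emph{not} force $\int\hat f_n\le 3/n$ on each log-affine piece, because a single piece can contain arbitrarily many data points. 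Filling the gap via the KKT characterisation and slope monotonicity would require substantial new work.

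The paper sidesteps this by bounding $\|\hat f_n\|_\infty$ in terms of the \emph{sample} standard deviation rather than $\sigma_{\hat f_n}$. Since $\hat f_n=\psi^*(\mathbb P_n)$, the projection bound $\sup_{P\in\Pcal^{\ge\sigma}}\sup_x\psi^*(P)(x)\le C/\sigma$ (extending Lemma~6 of Kim--Samworth) gives directly that $\{\sup_x\log\hat f_n(x)>\tfrac{t}{2}\log n\}$ is contained in the event that the empirical variance $\frac1n\sum_i(X_i-\bar X)^2$ is below $Cn^{-t/2}$, which in turn forces $|X_1-\bar X|$ to be polynomially small; anti-concentration then follows because $X_1-\bar X$ has a log-concave density bounded by a universal constant. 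This yields a clean polynomial tail $\P(\sup_x\log\hat f_n(x)>\tfrac{t}{2}\log n)\le Cn^{-(t/4-1/2)}$ with no need for the complementary-event surgery or the unproven deterministic bound. You should replace your $E^c$ argument by this direct variance/anti-concentration route (or otherwise supply a rigorous deterministic mode bound), since as written the proof of the first supremum is incomplete.
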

\begin{proof}
By the affine equivariance of the log-concave maximum likelihood estimator, there is no loss of generality in assuming that $\mu_{f_0}=0$ and $\sigma_{f_0}^2=1$.  Let $\mathcal{P}$ denote the class of probability distributions $P$ on $\mathbb{R}$ for which $\int_{-\infty}^\infty |x| \, dP(x) < \infty$ and $P$ is not a Dirac point mass.  We recall from \citet[][Theorem~2.2]{DSS2011} that there is a well-defined projection $\psi^*:\mathcal{P} \rightarrow \mathcal{F}$ given by
\[
\psi^*(P) := \argmax_{f \in \mathcal{F}} \int_{-\infty}^\infty \log f \, dP.
\]
Now, for $\sigma > 0$, let $\mathcal{P}^{\geq\sigma}$ denote the subset of $\mathcal{P}$ consisting distributions $P$ on the real line with
$\int_{-\infty}^\infty (x - \mu_P)^2 \, dP(x) \geq \sigma^2$, where $\mu_P := \int_{-\infty}^\infty x \, dP(x)$.  By a very similar
argument to that given in the proof of Lemma~6 of \citet{KimSamworth2016},
\[
\sup_{P \in \mathcal{P}^{\geq\sigma}} \sup_{x \in \mathbb{R}}~\psi^*(P)(x) \leq \frac{C}{\sigma}. 
\]
Since $\hat{f}_n = \psi^*(\mathbb{P}_n)$, where $\mathbb{P}_n$ denotes the empirical distribution of $X_1,\ldots,X_n$, we have for $t > 0$ that  
\[
\mathbb{P}\biggl(\sup_{x \in \mathbb{R}} \log \hat{f}_n(x) > \frac{t}{2} \log n\biggr) \leq \mathbb{P}\biggl(\frac{1}{n}\sum_{i=1}^n (X_i - \bar{X})^2 < \frac{C}{n^{t/2}}\biggr) \leq \mathbb{P}\biggl(|X_1 - \bar{X}| < \frac{C^{1/2}}{n^{t/4 - 1/2}}\biggr),
\]
where $\bar{X} := n^{-1}\sum_{i=1}^n X_i$.  But $X_1 - \bar{X}$ has mean 0, variance $1 - 1/n$ and has a log-concave density (which is therefore bounded by a universal constant).  Hence
\begin{equation}\label{boundSup}
\mathbb{P}\biggl(\sup_{x \in \mathbb{R}} \log \hat{f}_n(x) > \frac{t}{2} \log n\biggr) \leq \frac{C}{n^{t/4-1/2}}.
\end{equation}
Now write $X_{(1)} := \min_{1 \leq i \leq n} X_i$ and $X_{(n)} := \max_{1 \leq i \leq n} X_i$, let $F_0$ denote the distribution function corresponding to $f_0$ and for $t \geq 2$ let
\[
\Omega_t := \{X_{(1)} \geq F_0^{-1}(n^{-t/2}/\alpha)\} \cap \{X_{(n)} \leq F_0^{-1}(1-n^{-t/2}/\alpha)\},
\]
where $\alpha > 0$ is taken from Lemma~\ref{Lemma:Bobkov} below.  Then by a union bound,
\begin{equation}
\label{Eq:Union}
\sup_{f_0 \in \mathcal{F}^{0,1}} \mathbb{P}_{f_0}(\Omega_t^c) \leq \frac{2}{\alpha n^{t/2-1}}.
\end{equation}
Moreover, on $\Omega_t$,
\begin{align}
\label{Eq:ConvHull}
\sup_{x \in [X_{(1)},X_{(n)}]} \log \frac{1}{f_0(x)} &\leq \sup_{x \in [F_0^{-1}(n^{-t/2}/\alpha),F_0^{-1}(1-n^{-t/2}/\alpha)]} \log \frac{1}{f_0(x)} \nonumber \\
&= \max\biggl\{\log \frac{1}{f_0\bigl(F_0^{-1}(n^{-t/2}/\alpha)\bigr)} \, , \, \log \frac{1}{f_0\bigl(F_0^{-1}(1 - n^{-t/2}/\alpha)\bigr)}\biggr\} \nonumber \\
&\leq \frac{t}{2} \log n,
\end{align}
where the equality holds because the minimum of a concave function on a compact interval is attained at one of the endpoints of the interval, and the second inequality holds due to Lemma~\ref{Lemma:Bobkov} below.  It follows from~\eqref{boundSup},~\eqref{Eq:Union} and~\eqref{Eq:ConvHull} that for $t \geq 2$,
\begin{align}
\label{ub1}
\mathbb{P}\biggl(\sup_{x \in \mathbb{R}} \log \hat{f}_n(x) + \sup_{x \in [X_{(1)},X_{(n)}]} \log \frac{1}{f_0(x)} > t \log n\biggr) &\leq\mathbb{P}\biggl(\sup_{x \in \mathbb{R}} \log \hat{f}_n(x) > \frac{t}{2} \log n\biggr) \nonumber \\
&\hspace{0.5cm}+ \mathbb{P}\biggl(\sup_{x \in [X_{(1)},X_{(n)}]} \log \frac{1}{f_0(x)} > \frac{t}{2} \log n\biggr) \nonumber \\
&\leq \frac{C}{n^{t/4-1/2}} + \frac{2}{\alpha n^{t/2-1}},
\end{align}
and the result follows.
\end{proof}
The following result is a small generalisation of Proposition~A.1(c)
of \citet{Bobkov1996}.  
\begin{lemma}
\label{Lemma:Bobkov}
There exists $\alpha > 0$ such that for all $p \in (0,1)$,
\[
\inf_{f_0 \in \mathcal{F}^{0,1}} f_0\bigl(F_0^{-1}(p)\bigr) \geq \alpha \min(p,1-p).
\]
\end{lemma}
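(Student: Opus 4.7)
The plan is to combine the monotonicity of the reverse hazard rate for log-concave densities with a sharp variance lower bound at the median. Since the map $x \mapsto f_0(-x)$ carries $\mathcal{F}^{0,1}$ into itself, I may assume $p \in (0, 1/2]$, so that $\min(p, 1-p) = p$. Let $m$ denote the median of $f_0$, so that $x_p := F_0^{-1}(p) \leq m$. Since $\log f_0$ is concave, $F_0$ is also log-concave (by Pr\'ekopa--Leindler in one dimension), and therefore $x \mapsto f_0(x)/F_0(x) = (\log F_0)'(x)$ is non-increasing on the support of $f_0$. Evaluating at $x_p$ and $m$ yields
\[
\frac{f_0(x_p)}{p} \;=\; \frac{f_0(x_p)}{F_0(x_p)} \;\geq\; \frac{f_0(m)}{F_0(m)} \;=\; 2 f_0(m),
\]
so $f_0(x_p) \geq 2p \, f_0(m)$. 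The problem thus reduces to showing that $c_0 := \inf\{f_0(m) : f_0 \in \mathcal{F}^{0,1}\} > 0$, in which case the lemma holds with $\alpha = 2 c_0$.

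To bound $f_0(m)$ from below I would use two ingredients. First, Cantelli's inequality applied to both $X \sim f_0$ and $-X$ yields $|m - \mu_{f_0}| = |m| \leq \sigma_{f_0} = 1$. Second, I would establish the sharp variance lower bound
\[
\int_{-\infty}^\infty (x - m_g)^2 \, g(x) \, dx \;\geq\; \frac{1}{12 \, g(m_g)^2},
\]
valid for every log-concave density $g$ on $\mathbb{R}$ with median $m_g$, with equality attained by the uniform density on a symmetric interval about $m_g$. Applied to $f_0$, the left-hand side equals $\sigma_{f_0}^2 + (\mu_{f_0} - m)^2 = 1 + m^2 \leq 2$, so $f_0(m)^2 \geq 1/24$ and one may take $\alpha = 1/\sqrt{6}$.

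For the variance lower bound I would change variables via the quantile function $Q := F_g^{-1}$, which rewrites the left-hand side as $\int_0^1 (Q(u) - Q(1/2))^2 \, du$. Writing $r(u) := g(Q(u))$ so that $Q'(u) = 1/r(u)$, one has $r'(u) = (\log g)'(Q(u))$; since $(\log g)'$ is non-increasing and $Q$ is non-decreasing, $r$ is concave on $(0, 1)$ with $r(1/2) = g(m_g) =: q$. The functional
\[
J[r] \;:=\; \int_0^1 \biggl(\int_{1/2}^u \frac{dv}{r(v)}\biggr)^2 \, du
\]
is convex in $r$, since $1/r$ is convex in $r>0$, the inner integral has constant sign on each side of $u=1/2$, and squaring a convex (resp.~concave and non-positive) function preserves convexity. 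A first-variation computation at the constant function $r \equiv q$ reduces, via the elementary identity $\int_0^1 (v - 1/2) v(1-v) \, dv = 0$ together with the tangent-line bound $\eta(v) \leq \eta'(1/2)(v - 1/2)$ for concave $\eta$ with $\eta(1/2) = 0$, to the inequality $\int_0^1 \eta(v) v(1-v) \, dv \leq 0$. This identifies $r \equiv q$ as a critical point, and convexity of $J$ then forces it to be the global minimiser; direct evaluation gives $J[q] = 1/(12 q^2)$, as required.

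The main obstacle is carrying through this convex variational argument rigorously over the full cone of non-negative concave $r$ with $r(1/2) = q$, including cases where $r$ vanishes at the endpoints of $[0,1]$ (corresponding to log-concave $g$ with unbounded support), which requires a suitable truncation or approximation step. An alternative, softer route would be to invoke the Cheeger-type isoperimetric inequality of \citet{Bobkov1996} directly for log-concave probability measures on the real line, which immediately delivers the lemma with some (possibly suboptimal) universal constant $\alpha > 0$.
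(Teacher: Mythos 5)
Your proof is correct in outline and takes a genuinely different route from the paper's. The paper cites Proposition~A.1(c) of \citet{Bobkov1996}, which gives that $p \mapsto I(p) := f_0\bigl(F_0^{-1}(p)\bigr)$ is \emph{concave} on $(0,1)$, combines it with the uniform lower bound $\inf_{f_0 \in \mathcal{F}^{0,1}} f_0(0) \geq \alpha$ cited from Theorem~5(b) of \citet{KimSamworth2016}, and then interpolates. Your argument instead observes that log-concavity of $F_0$ makes the reverse hazard rate $I(p)/p = f_0(F_0^{-1}(p))/F_0(F_0^{-1}(p))$ non-increasing (a slightly weaker fact than Bobkov's concavity, since a non-negative concave $I$ with $I(0{+})\geq 0$ automatically has $I(p)/p$ non-increasing), and anchors the bound at the median rather than at $0$, proving $\inf_{f_0\in\mathcal{F}^{0,1}} f_0(m) \geq 1/(2\sqrt{6})$ from scratch via a Hensley-type variance inequality $\int (x-m_g)^2 g \geq 1/\{12\,g(m_g)^2\}$ combined with the Cantelli bound $|m|\leq 1$. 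This buys you a self-contained proof that avoids citing \citet{KimSamworth2016} and produces an explicit constant $\alpha = 1/\sqrt{6}$, at the cost of a longer variational argument. Your quantile-reparametrisation proof of the variance inequality is sound at the level of the first-variation computation (the tangent-line bound $\eta(v) \leq s(v-1/2)$ for a supergradient $s$ at $1/2$, together with $\int_0^1 (v-1/2)v(1-v)\,dv = 0$, does give non-negativity of the first variation, and convexity of $J$ then closes the argument), and you correctly flag the only loose end: verifying the variational optimality in full generality when $r(u) = g(Q(u))$ vanishes at $u \to 0^+, 1^-$, which would require an approximation step. Your noted alternative---invoking Bobkov's one-dimensional Cheeger-type isoperimetric inequality directly---is closer in spirit to what the paper does and would sidestep this technicality entirely.
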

\begin{proof}
Proposition~A.1(c) of \citet{Bobkov1996} gives that $p \mapsto f_0\bigl(F_0^{-1}(p)\bigr)$ is positive and concave on $(0,1)$.  But, by Theorem~5(b) of \citet{KimSamworth2016}, there exists $\alpha > 0$ such that 
\[
\inf_{f_0 \in \mathcal{F}^{0,1}} f_0(0) \geq \alpha.
\]
Noting that $F_0(0) \in (0,1)$, we deduce by concavity that for $p \in (0,F_0(0)]$,
\[
\inf_{f_0 \in \mathcal{F}^{0,1}} f_0\bigl(F_0^{-1}(p)\bigr) \geq \frac{p}{F_0(0)}\alpha \geq \alpha p \geq \alpha\min(p,1-p).
\] 
A very similar argument handles the case $p \in \bigl(F_0(0),1)$, and this concludes the proof.
\end{proof}

\subsubsection{Auxiliary results for the proof of Theorem~\ref{ment}}
\label{Sec:Auxment}

Recall that we can write $\mathcal{F}^1 = \{f_{\alpha,s_1,s_2}:(\alpha,s_1,s_2) \in \mathcal{T}\}$.
\begin{lemma}\label{affi}
If $X \sim f_{\alpha,s_1,s_2} \in \mathcal{F}^1$, then there exist $a \neq 0$ and $b \in \mathbb{R}$ such that $aX+b$ has a density $f_0 \in \mathcal{F}^1$ of one of the following three forms:
\begin{enumerate}
\item $f_0 = f_{0,0,1}$;
\item $f_0 = f_{-\alpha_0,0,1}$ for some $\alpha_0 \in (0,18)$;
\item $f_0 = f_{-1,0,s_0}$ for some $s_0 \in [18,\infty]$.   
  \end{enumerate}
\end{lemma}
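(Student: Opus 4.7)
The plan is to reduce any $f_{\alpha,s_1,s_2} \in \mathcal{F}^1$ to one of the three canonical forms by an explicit affine transformation, exploiting the fact that $\mathcal{F}^1$ is closed under affine maps (uniform densities remain uniform; exponential densities remain exponential, possibly with reflected rate). The idea is to choose $a \neq 0$ so that either the support becomes $[0,1]$ or the exponential rate becomes $-1$, and to use the threshold $\kappa := |\alpha|(s_2-s_1) < 18$ versus $\kappa \geq 18$ to decide which normalisation to apply in the bounded-support case.

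I would handle the following cases in order. First, if $\alpha = 0$, then $(s_1,s_2) \in \mathcal{T}_0$ and $X$ is uniform on $[s_1,s_2]$; taking $a = 1/(s_2-s_1)$, $b = -s_1/(s_2-s_1)$ yields $aX+b \sim f_{0,0,1}$, giving form (i). Second, suppose $(\alpha,s_1,s_2) \in \mathbb{R}\times \mathcal{T}_0$ with $\alpha \neq 0$, and let $\kappa := |\alpha|(s_2-s_1) \in (0,\infty)$. If $\kappa < 18$, I normalise the support to $[0,1]$: when $\alpha > 0$ take $a = -1/(s_2-s_1)$, $b = s_2/(s_2-s_1)$, and when $\alpha < 0$ take $a = 1/(s_2-s_1)$, $b = -s_1/(s_2-s_1)$. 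A direct change-of-variables calculation shows that in both sub-cases $aX+b \sim f_{-\kappa,0,1}$, giving form (ii) with $\alpha_0 = \kappa \in (0,18)$. If instead $\kappa \geq 18$, I normalise the rate to $-1$: for $\alpha > 0$ take $a = -\alpha$, $b = \alpha s_2$, and for $\alpha < 0$ take $a = -\alpha$, $b = \alpha s_1$. In each case $Y := aX+b$ is supported on $[0,\kappa]$ with density proportional to $e^{-y}$, so $Y \sim f_{-1,0,\kappa}$, giving form (iii) with $s_0 = \kappa \in [18,\infty)$.

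Third, the two half-infinite cases in the definition of $\mathcal{T}$ are handled in the same spirit. If $(\alpha,-\infty,s_2) \in (0,\infty)\times\{-\infty\}\times\mathbb{R}$, the map $a = -\alpha$, $b = \alpha s_2$ sends $X$ to $Y = \alpha(s_2-X) \in [0,\infty)$ with density $e^{-y}$; if $(\alpha,s_1,\infty) \in (-\infty,0)\times\mathbb{R}\times\{\infty\}$, the map $a = -\alpha$, $b = \alpha s_1$ does the analogous job. In both instances $Y \sim f_{-1,0,\infty}$, which is form (iii) with $s_0 = \infty$.

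There is no real obstacle here: the result is a bookkeeping exercise, and the only subtlety is the choice of the threshold $18$. This threshold is not intrinsic to the lemma but is dictated by the proof of Theorem~\ref{ment}: form (ii) is convenient when the density is close to uniform on $[0,1]$ (so that boundedness arguments via Lemma~\ref{ub} apply), while form (iii) is convenient when the exponential rate is fixed at $-1$ (so that the decomposition into strips governed by the function $\tilde{\phi}_a$ of~\eqref{Eq:phitildea} has scale-invariant control). Any constant exceeding the corresponding quantitative thresholds appearing in the proof of Theorem~\ref{ment} would work; $18$ is simply convenient.
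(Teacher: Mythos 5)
Your proof is correct and matches the paper's proof essentially exactly: the same case split on $\alpha=0$, $|\alpha|(s_2-s_1)<18$, and $|\alpha|(s_2-s_1)\geq 18$ (with the half-infinite cases absorbed into the last), and the same choices of $a$ and $b$ in each sub-case. Your closing remark correctly identifies the threshold $18$ as an artefact of the proof of Theorem~\ref{ment} rather than anything intrinsic.
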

\begin{proof}
Let $X \sim f_{\alpha,s_1,s_2} \in \mathcal{F}^1$ for some $(\alpha,s_1,s_2) \in \mathcal{T}$, and let $a \neq 0$ and $b \in \mathbb{R}$.  Then
\[
aX+b \sim \left\{ \begin{array}{ll} f_{\alpha/a,as_1+b,as_2+b} & \mbox{if $a > 0$} \\
f_{\alpha/a,as_2+b,as_1+b} & \mbox{if $a < 0$.} \end{array} \right.
\]
Thus, if $\alpha = 0$, we can set $a = (s_2-s_1)^{-1}$, $b = -s_1(s_2-s_1)^{-1}$ so that $aX+b \sim f_{0,0,1}$.  If $\alpha > 0$ and $\alpha(s_2-s_1) < 18$, then we can set $a = -(s_2-s_1)^{-1}$, $b = s_2(s_2-s_1)^{-1}$ while if $\alpha < 0$ and $|\alpha|(s_2-s_1) < 18$ then we can set $a = (s_2-s_1)^{-1}$, $b = -s_1(s_2-s_1)^{-1}$; in either situation, $aX+b \sim f_{-\alpha_0,0,1}$, with $\alpha_0 := |\alpha|(s_2-s_1) \in (0,18)$.  Finally, if $\alpha > 0$ and $\alpha(s_2-s_1) \in [18,\infty]$, then we can set $a = -\alpha$, $b = \alpha s_2$ while if $\alpha < 0$ and $|\alpha|(s_2-s_1) \in [18,\infty]$ then we can set $a = -\alpha$, $b=\alpha s_1$; in either situation, $aX+b \sim f_{-1,0,s_0}$ with $s_0 := |\alpha|(s_2-s_1)$.
\end{proof}

\begin{lemma}\label{ub}
Let $\phi : \R \rightarrow [-\infty, \infty)$ be a concave function whose domain is contained in $[0,1]$ and which satisfies   
\begin{equation}\label{lb.con}
\int_0^1 (e^{\phi(u)/2}  - 1)^2 \, du \leq \delta^2 
\end{equation}
for some $\delta \in (0,2^{-5/2}]$. Then 
\begin{equation}\label{ub1.eq}
  \phi(x) \leq 2^{13/2} \delta \qt{for every $x \in [0,1]$.}
\end{equation}
Moreover,
\begin{equation}\label{ub2.eq}
  \phi(x) \geq \frac{-4\delta}{\{\min(x, 1-x)\}^{1/2}} \qt{when $\min(x, 1-x) \geq 4 \delta^2$}. 
\end{equation}
\end{lemma}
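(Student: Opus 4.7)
The two inequalities are essentially independent, so the plan is to handle them separately, starting with the shorter lower bound~\eqref{ub2.eq}.

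For~\eqref{ub2.eq}, fix $x \in [0,1]$ with $m := \min(x, 1-x) \geq 4\delta^2$ and set $A := -\phi(x) \in [0, \infty]$. Let $x^* \in [0,1]$ be a point at which $\phi$ attains (or approaches) its supremum. By concavity, $\phi$ is nondecreasing on $[0, x^*]$ and nonincreasing on $[x^*, 1]$; in particular, if $x^* \geq x$ then $\phi \leq \phi(x) = -A$ throughout $[0, x]$, while if $x^* < x$ then $\phi \leq -A$ throughout $[x, 1]$. Either way, a subinterval of $[0,1]$ of length at least $m$ has $\phi \leq -A$, so~\eqref{lb.con} gives $m(1 - e^{-A/2})^2 \leq \delta^2$. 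Since $m \geq 4\delta^2$ forces $\delta/m^{1/2} \leq 1/2$, we deduce $e^{-A/2} \geq 1 - \delta/m^{1/2} \geq 1/2$; combining with the elementary inequality $-\log(1-t) \leq 2t$ valid for $t \in [0, 1/2]$ yields $A \leq 4\delta/m^{1/2}$, which is~\eqref{ub2.eq}.

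For~\eqref{ub1.eq}, set $M := \sup_{[0,1]} \phi$ and assume $M > 0$ (otherwise the bound is trivial). Choose $x^* \in [0,1]$ with $\phi(x^*)$ close to $M$, and by reflecting $\phi(x) \mapsto \phi(1-x)$ if necessary (which preserves both the hypothesis~\eqref{lb.con} and the target conclusion), assume $x^* \leq 1/2$. Working on the right half $[x^*, 1]$, whose length is at least $1/2$, we distinguish two cases. If $\phi \geq 0$ throughout $[x^*, 1]$ intersected with the effective domain of $\phi$ (using that this domain must cover all but a set of measure at most $\delta^2$ in $[0,1]$, since $(e^{\phi/2}-1)^2 = 1$ outside), then the concavity chord bound gives $\phi(x) \geq M(1-x)/(1-x^*)$, and combining with $(e^{t/2} - 1)^2 \geq t^2/4$ for $t \geq 0$ yields $\int_{x^*}^1 (e^{\phi/2} - 1)^2 \geq M^2(1-x^*)/12 \geq M^2/24$, forcing $M \leq \sqrt{24}\,\delta$. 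Otherwise, let $x_0 \in (x^*, 1]$ be the first zero of $\phi$ to the right of $x^*$ and let $r := x_0 - x^*$. The chord argument on $[x^*, x_0]$ contributes at least $M^2 r/12$ to the integral; the tangent bound $\phi(x) \leq -M(x - x_0)/r$ on $[x_0, \min(x_0 + r, 1)]$ provides a comparable middle piece; and (when $x_0 + r \leq 1$) the tail region $[x_0 + r, 1]$, on which $\phi \leq -M$, contributes at least $(1 - x_0 - r)(1 - e^{-M/2})^2$. Summing these and minimising over $x^* \in [0, 1/2]$ and $r \in (0, 1 - x^*]$ produces a bound $M^2 / C \leq \delta^2$ for a universal constant $C$, whence $M \leq 2^{13/2}\delta$.

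The main obstacle is the bookkeeping in the upper bound when the descent interval $[x_0, x_0 + r]$ is truncated by $x = 1$; the hypothesis $x^* \leq 1/2$ ensures at least half of $[0,1]$ lies to the right of $x^*$, so at least one of the three contributions retains a factor of order $M^2$. The inequality $(1 - e^{-M/2})^2 \geq M^2/16$, which holds for $M \leq 3$, dispatches the small-$M$ regime, while for $M \in [3, 16]$ (the upper end being the largest value permitted under $\delta \leq 2^{-5/2}$) the first chord contribution $M^2 r/12$ or the tail contribution (bounded below by a positive absolute constant) already exceeds the permitted $\delta^2 \leq 2^{-5}$. The constant $2^{13/2}$ in the statement is far from sharp---a more careful analysis would yield something like $4\sqrt{2}$---but it is comfortably large enough to absorb the slack.
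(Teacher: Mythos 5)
Your argument for the lower bound~\eqref{ub2.eq} matches the paper's essentially step for step: by concavity, $\phi \leq \phi(x)$ on whichever side of $x$ is at least as long, so $m\bigl(1-e^{\phi(x)/2}\bigr)^2 \leq \delta^2$, and $-\log(1-t) \leq 2t$ for $t \leq 1/2$ finishes.

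For the upper bound~\eqref{ub1.eq} you take a genuinely different route. The paper fixes an arbitrary $x \in [0,1/2]$ with $\phi(x) > 0$, lets $x_*$ be the first zero of $\phi$ to the right of $x$ (or $1$ if none exists), and splits on $x_* \geq 3/4$ (a chord bound on $[x,x_*]$ gives $\phi(x) \leq 4\sqrt{3}\,\delta$) versus $x_* < 3/4$ (the tangent slope pushes $\phi(u) \leq -\phi(x)/8$ on $[7/8,1]$, producing the $2^{13/2}\delta$ constant). This yields a bound on $\phi(x)$ directly, with no need to locate a maximiser. You instead reduce to bounding $M := \sup\phi$, pick a near-maximiser $x^*$, reflect so $x^* \leq 1/2$, and partition $[x^*,1]$ into a chord region $[x^*,x_0]$, a tangent region of length $\min(r,1-x_0)$, and (when present) a tail on which $\phi \leq -M$. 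Both routes rest on the same chord/tangent geometry; the paper's $3/4$ threshold sidesteps your three-region bookkeeping and the separate large-$M$ discussion, which is why its proof is shorter. Your approach is sound, but a few steps need tightening. First, the closing ``summing these and minimising'' is only a sketch; the optimisation over $(x^*,r)$ should be carried out, e.g.\ for $M \leq 3$ one checks that $\tfrac{M^2 r}{12} + \max(1-x^*-2r,0)\,\tfrac{M^2}{16} \geq \tfrac{M^2}{48}$ whenever $x^* \leq 1/2$. Second, in Case~1 the chord should run from $x^*$ to the right edge $b$ of the effective domain of $\phi$ rather than to $1$: you correctly note $b \geq 1 - \delta^2$, and that is what makes the chord contribution comparable to $M^2/12$, but as written the bound $\phi(x) \geq M(1-x)/(1-x^*)$ uses the value $\phi(1)$, which may be $-\infty$. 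Third, a concave $\phi$ with domain contained in $[0,1]$ need not attain its supremum (e.g.\ a function jumping down at a boundary point of its domain), so either take $x^*$ with $\phi(x^*) > M - \eta$ and let $\eta \to 0$, or note explicitly that the remainder of the argument is continuous in $\phi(x^*)$. None of these gaps is fatal, but they should be filled.
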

\begin{proof}
We first prove inequality \eqref{ub1.eq}. By symmetry, it suffices to prove that $\phi(x) \leq 2^{13/2}\delta$ for all $x \in [0,1/2]$. Fix $x \in [0,1/2]$ and assume that $\phi(x) > 0$, for otherwise there is nothing to prove. Let $x_* \in (x, 1]$ be such that $\phi(x_*) = 0$ if such an $x_*$ exists; otherwise, set $x_* = 1$. 

We first consider the case $x_* \geq 3/4$.  Since $e^x \geq 1+x$ and $\phi$ is a concave function with $\phi(x_*) \geq 0$, 
\begin{align*}
\delta^2 \geq \int_x^{x_*} (e^{\phi(u)/2}  - 1)^2 \, du \geq \frac{1}{4} \int_x^{x_*} \phi^2(u) \, du \geq \frac{\phi^2(x)}{4}\int_x^{x_*} \biggl(\frac{x_* - u}{x_*- x}\biggr)^2 \, du &= \frac{x_*-x}{12}\phi^2(x) \\
&\geq \frac{\phi^2(x)}{48},
\end{align*}
so $\phi(x) \leq 4\sqrt{3}\delta$.

Now suppose instead that $x_* < 3/4$, so that $\phi(x_*) = 0$.  Then for $u \in [7/8,1]$,
\[
\phi(u) \leq -\frac{u-x_*}{x_* - x} \phi(x) \leq - \frac{\phi(x)}{8}.
\]
We deduce that
\[
\delta^2 \geq \int_{7/8}^1 (1 - e^{\phi(u)/2})^2 \, du \geq \frac{1}{8}(1 - e^{-\phi(x)/16})^2,
\]
so 
\[
\phi(x) \leq 16\log\biggl(\frac{1}{1 - 2^{3/2}\delta}\biggr) \leq \frac{2^{11/2}\delta}{1-2^{3/2}\delta} \leq 2^{13/2}\delta,
\]  
since $\delta \in (0,2^{-5/2}]$.  This completes the proof of \eqref{ub1.eq}. 

We now proceed to prove inequality \eqref{ub2.eq}, and by symmetry it suffices to consider a fixed $x \in [4\delta^2,1/2]$.  We assume that $\phi(x) < 0$, because otherwise there is nothing to prove.  By concavity of $\phi$, we have either $\phi(u) \leq \phi(x)$ for all $u \in [0,x]$ or $\phi(u) \leq \phi(x)$ for all $u \in [x,1]$.  In the former case, 
\[
\delta^2 \geq \int_0^{x} (1 - e^{\phi(u)/2})^2 \, du \geq x(1 - e^{\phi(x)/2})^2.
\]
Thus
\[
\phi(x) \geq 2\log\biggl(1 - \frac{\delta}{x^{1/2}}\biggr) \geq \frac{-4\delta}{x^{1/2}}.
\]
In the latter case, where $\phi(u) \leq \phi(x)$ for all $u \in [x,1]$, we find
\[
\delta^2 \geq \int_x^1 (1 - e^{\phi(u)/2})^2 \, du \geq (1-x)(1 - e^{\phi(x)/2})^2 \geq x(1 - e^{\phi(x)/2})^2,
\]
and the conclusion follows as before.
\end{proof}

\begin{lemma}\label{blan}
Let $f_0 = f_{-1,0,a} \in \mathcal{F}^1$ for some $a \in [18,\infty]$, and let ${\phi} : \R \rightarrow [-\infty, \infty)$ be a concave function whose domain is contained in $[0, a]$ and which satisfies   
\begin{equation}\label{blan.con}
    \int_0^a \{e^{\phi(u)/2} - f_0^{1/2}(u)\}^2 \, du \leq \delta^2   
\end{equation}
for some $\delta \in (0,e^{-9}/8]$.  Let 
  \begin{equation}\label{x0}
    x_0 := \min \biggl\{\log \frac{1}{2^6 e \delta^2 (1 - e^{-a})} \, , \, a-1\biggr\} \geq 17. 
  \end{equation}
Then with $\tilde{\phi}_a$ defined as in~\eqref{Eq:phitildea}, we have
\begin{equation} \label{blan.lb}
-4\frac{e^{x/2}(1 - e^{-a})^{1/2}}{(1 - e^{-1})^{1/2}}\delta \leq \tilde{\phi}_a(x) \leq 2^{13/2} e^{x/2}(1 - e^{-a})^{1/2}\delta
\qt{for every $x \in [1,x_0]$}
\end{equation}
and 
\begin{equation}\label{blan.ub2}
  \tilde{\phi}_a(x) \leq 8 \frac{x - x_0}{x_0 - 1} + 7 \qt{for every $x \in [x_0,a]$}.  
\end{equation}
\end{lemma}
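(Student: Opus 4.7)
The first step is to recast the hypothesis in terms of $\tilde{\phi}_a$. Substituting $f_0(u) = e^{-u}/(1-e^{-a})$ on $[0,a]$ and using~\eqref{Eq:phitildea} to compute $e^{\phi(u)/2}/f_0^{1/2}(u) = e^{\tilde{\phi}_a(u)/2}$, the condition~\eqref{blan.con} becomes
\begin{equation*}
\int_0^a e^{-u}\bigl(e^{\tilde{\phi}_a(u)/2}-1\bigr)^2\,du \leq (1-e^{-a})\delta^2.
\end{equation*}
Note also that $\tilde{\phi}_a$ is concave, being $\phi$ plus an affine function.

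For the right-hand (upper) inequality in~\eqref{blan.lb}, fix $x \in [1,x_0]$ and work on the interval $[x-1,x]$, which lies in $[0,a]$ because $x_0 \leq a-1$. The pointwise bound $e^{-u} \geq e^{-x}$ on $[x-1,x]$ gives
\begin{equation*}
\int_{x-1}^x \bigl(e^{\tilde{\phi}_a(u)/2}-1\bigr)^2\,du \leq e^x(1-e^{-a})\delta^2 =: \tilde{\delta}^2,
\end{equation*}
and the defining inequality $x \leq x_0 \leq \log\bigl\{1/(2^6 e\delta^2(1-e^{-a}))\bigr\}$ forces $\tilde{\delta}^2 \leq 1/(2^6 e) \leq 2^{-5}$, so $\tilde{\delta} \leq 2^{-5/2}$. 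I then apply~\eqref{ub1.eq} of Lemma~\ref{ub} to the concave translate $\psi(v) := \tilde{\phi}_a(v+x-1)$ on $[0,1]$ and read off $\tilde{\phi}_a(x) = \psi(1) \leq 2^{13/2}\tilde{\delta} = 2^{13/2}e^{x/2}(1-e^{-a})^{1/2}\delta$.

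For the left-hand (lower) inequality in~\eqref{blan.lb} I argue directly from concavity, since Lemma~\ref{ub}'s lower bound~\eqref{ub2.eq} degenerates at the endpoints of $[0,1]$ and so cannot be used to bound $\psi(1)$. Assuming $\tilde{\phi}_a(x) < 0$ (else trivial), concavity implies that on at least one of $[x-1,x]$ or $[x,x+1]$ (both contained in $[0,a]$) one has $\tilde{\phi}_a \leq \tilde{\phi}_a(x)$, so $(1-e^{\tilde{\phi}_a/2})^2 \geq (1-e^{\tilde{\phi}_a(x)/2})^2$ on that interval. The smaller of the two weights, $\int_x^{x+1}e^{-u}\,du = e^{-x}(1-e^{-1})$, yields the uniform estimate
\begin{equation*}
(1-e^{\tilde{\phi}_a(x)/2})^2 \leq \frac{e^x(1-e^{-a})\delta^2}{1-e^{-1}}.
\end{equation*}
Writing $A$ for the square root of the right-hand side, a short calculation using $e^{x_0}(1-e^{-a})\delta^2 \leq 1/(2^6 e)$ shows $A \leq 1/2$, so $\log(1-A) \geq -2A$ and hence $\tilde{\phi}_a(x) \geq -4A$, as required.

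For~\eqref{blan.ub2} the plan is to apply~\eqref{blan.lb} at the two endpoints $x=1$ and $x=x_0$. The definition of $x_0$ forces $e^{x_0/2}(1-e^{-a})^{1/2}\delta \leq 1/(2^3 e^{1/2})$, so $\tilde{\phi}_a(x_0) \leq 2^{13/2}/(2^3 e^{1/2}) = \sqrt{128/e} < 7$, while the hypothesis $\delta \leq e^{-9}/8$ makes $e^{1/2}(1-e^{-a})^{1/2}\delta$ so small that $|\tilde{\phi}_a(1)|$ is negligible and $\tilde{\phi}_a(x_0) - \tilde{\phi}_a(1) < 8$. For $x \geq x_0$, the concavity of $\tilde{\phi}_a$ gives the chord inequality
\begin{equation*}
\tilde{\phi}_a(x) \leq \tilde{\phi}_a(x_0) + \frac{\tilde{\phi}_a(x_0)-\tilde{\phi}_a(1)}{x_0-1}(x-x_0);
\end{equation*}
when the chord slope is nonnegative this is bounded by $7 + 8(x-x_0)/(x_0-1)$, and when it is negative concavity forces $\tilde{\phi}_a(x) \leq \tilde{\phi}_a(x_0) \leq 7$, and the same bound holds trivially. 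The main obstacle throughout is the careful bookkeeping of constants: ensuring that the hypothesis $\delta \leq e^{-9}/8$ simultaneously delivers $\tilde{\delta} \leq 2^{-5/2}$, $A \leq 1/2$, $\tilde{\phi}_a(x_0) \leq 7$ and $\tilde{\phi}_a(x_0) - \tilde{\phi}_a(1) \leq 8$, together with the case analysis required to apply concavity correctly in the lower bound.
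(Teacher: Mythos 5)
Your proposal is correct and follows essentially the same route as the paper's own proof: recasting \eqref{blan.con} in terms of $\tilde{\phi}_a$, applying Lemma~\ref{ub}\eqref{ub1.eq} to a unit-length translate for the upper bound in \eqref{blan.lb}, using the concavity dichotomy (one side of $x$ satisfies $\tilde{\phi}_a \leq \tilde{\phi}_a(x)$) for the lower bound, and then combining the endpoint bounds at $1$ and $x_0$ with the chord inequality for \eqref{blan.ub2}. The only difference is cosmetic: for the lower bound you restrict to the length-one intervals $[x-1,x]$ and $[x,x+1]$ directly, whereas the paper integrates over $[0,x]$ and $[x,a]$ and then bounds the resulting weights below using $x\geq 1$ and $x\leq a-1$; after those reductions the two arguments produce the identical estimate $\{1-e^{\tilde{\phi}_a(x)/2}\}^2\,e^{-x}\min(e-1,1-e^{-1})/(1-e^{-a})\leq\delta^2$.
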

\begin{proof}
Fix $f_0 = f_{-1,0,a}$ for some $a \in [18,\infty]$, $\delta \in (0,e^{-9}/8]$ and $\phi$ satisfying the conditions of the lemma.  
For ease of notation, let us denote $\tilde{\phi}_a$ by $\psi$. 
We first prove the lower bound for $\psi$ in \eqref{blan.lb}. Fix $x \in [1,x_0]$ and assume that $\psi(x) < 0$ because otherwise there is nothing to prove.  By concavity of $\psi$, the inequality $\psi(u) \leq \psi(x)$ is true either for all $u \in [0,x]$ or for all $u \in [x,a]$. In the former case, 
\begin{align}
\label{Eq:lb1}
\delta^2 &\geq \int_0^x \{e^{\phi(u)/2} - f_0^{1/2}(u)\}^2 \, du = \int_0^x (1-e^{\psi(u)/2})^2 \frac{e^{-u}}{1 -
      e^{-a}} \, du  \nonumber \\
&\geq (1-e^{\psi(x)/2})^2\frac{1 - e^{-x}}{1 - e^{-a}} \geq (1-e^{\psi(x)/2})^2\frac{e^{-x}(e - 1)}{1 - e^{-a}},
\end{align}
where we used the fact that $x \geq 1$ in the final inequality.  Similarly in the latter case, we can consider the integral from $x$ to $a$ instead to obtain
\begin{equation}
\label{Eq:lb2}
\delta^2 \geq (1-e^{\psi(x)/2})^2\frac{e^{-x} - e^{-a}}{1 - e^{-a}} \geq (1-e^{\psi(x)/2})^2\frac{e^{-x} (1 - e^{-1})}{1-e^{-a}},
\end{equation}
where we used the fact that $x \leq a-1$ for the final inequality.  Now
\[
\frac{e^{x/2}(1 - e^{-a})^{1/2}}{(1 - e^{-1})^{1/2}}\delta  \leq \frac{e^{x_0/2}(1 - e^{-a})^{1/2}}{(1 - e^{-1})^{1/2}}\delta \leq \frac{1}{2},
\]
and we deduce from~\eqref{Eq:lb1} and~\eqref{Eq:lb2} that 
\[
\psi(x) \geq 2\log\biggl(1 - \frac{e^{x/2}(1 - e^{-a})^{1/2}}{(1 - e^{-1})^{1/2}}\delta\biggr) \geq -\frac{4e^{x/2}(1 - e^{-a})^{1/2}}{(1 - e^{-1})^{1/2}}\delta,
\]
as required.

We next prove the upper bound in \eqref{blan.lb}. To this end, again fix $x \in [1,x_0]$ and note by very similar arguments to those above that  
\begin{equation*}
  \delta^2 \geq \int_{x-1}^x (e^{\psi(u)/2}-1)^2 \frac{e^{-u}}{1 - e^{-a}} \, du \geq \frac{e^{-x}}{1 - e^{-a}}\int_0^1 (e^{\psi(u+x-1)/2}-1)^2 \, du.
\end{equation*}
Now
\[
e^{x/2}(1-e^{-a})^{1/2} \delta \leq e^{x_0/2}(1-e^{-a})^{1/2} \delta \leq \frac{1}{8e^{1/2}} \leq 2^{-5/2},
\]
so the result follows by~\eqref{ub1.eq} in Lemma~\ref{ub}. 

Finally, we prove~\eqref{blan.ub2}. Fix $x \in [x_0,a]$. Inequality \eqref{blan.lb} gives 
\begin{equation*}
  \psi(x_0) \leq 2^{13/2} e^{x_0/2}(1 - e^{-a})^{1/2}\delta \leq 2^{7/2}e^{-1/2}
\end{equation*}
and also that 
\begin{equation*}
  \psi(1) \geq -4\frac{e^{1/2}(1 - e^{-a})^{1/2}}{(1 - e^{-1})^{1/2}}\delta \geq - \frac{e^{1/2}}{2 e^9(1 - e^{-1})^{1/2}} \geq -\frac{1}{2}.
\end{equation*}
It therefore follows by concavity of $\psi$ that 
\begin{equation*}\label{ttp}
  \psi(x) \leq \frac{x - x_0}{x_0 - 1} \{\psi(x_0) - \psi(1)\} + \psi(x_0) \leq 8 \frac{x - x_0}{x_0 - 1} + 7,
\end{equation*}
as required.   
\end{proof}


In order to prove Theorem~\ref{ment} for these three cases, we need to prove two results on the bracketing numbers of log-concave functions on bounded subintervals of $\R$.  For $a < b$ and $-\infty \leq B_1 \leq B_2 < \infty$, let $\F([a, b], B_1, B_2)$ denote the class of all non-negative functions $f$ on $[a, b]$ such that $\log f$ is concave and such that $B_1 \leq \log f(x) \leq B_2$ for every $x \in [a,b]$. 
\begin{proposition}\label{mb1}
There exists a universal constant $C > 0$ such that 
  \begin{equation}\label{mb1.eq}
H_{[]} \bigl(\epsilon, \F([a, b], B_1, B_2), \shel, [a, b] \bigr)  \leq C(B_2-B_1)^{1/2} \frac{e^{B_2/4} (b-a)^{1/4}}{\epsilon^{1/2}} 
  \end{equation}
for every $\epsilon > 0$, $a < b$ and $-\infty \leq B_1 \leq B_2 < \infty$. 
\end{proposition}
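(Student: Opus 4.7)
The plan is to reduce Proposition~\ref{mb1} by two scaling arguments to a bracketing entropy bound for uniformly bounded concave functions in $L_2([0,1])$, and then to invoke a standard entropy estimate for the latter class. If $B_1 = -\infty$, the right-hand side of~\eqref{mb1.eq} is infinite and the statement is vacuous, so I assume henceforth that $B_1 > -\infty$ and write $B := B_2 - B_1 \geq 0$.

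First I would use the affine substitution $y = (x-a)/(b-a)$, which preserves log-concavity and the range $[B_1, B_2]$, to reduce to the case $[a,b] = [0,1]$: since the squared Hellinger norm scales by the factor $(b-a)$, this produces the $(b-a)^{1/4}$ factor in~\eqref{mb1.eq}. Next I would replace $f$ by $g := f e^{-B_2}$, which is log-concave with $\log g \in [-B, 0]$ and for which the squared Hellinger norm is scaled by $e^{-B_2}$; this produces the $e^{B_2/4}$ factor. After these reductions it suffices to show that
\[
  H_{[]}\bigl(\epsilon, \F([0,1], -B, 0), \shel, [0,1]\bigr) \leq C B^{1/2}\epsilon^{-1/2}.
\]

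For $f \in \F([0,1], -B, 0)$, write $\phi := \log f$, a concave function with $-B \leq \phi \leq 0$. Given a bracket $\phi_L \leq \phi \leq \phi_U$ with $\phi_U \leq 0$ (the mild truncation $\phi_U \leftarrow \min(\phi_U, 0)$ ensures this at no cost to the bracket width), I would set $f_L := e^{\phi_L}$ and $f_U := e^{\phi_U}$; these need not themselves be log-concave, which is fine for bracketing. Using the elementary inequality $(e^u - e^v)^2 \leq (u-v)^2$, valid for $u, v \leq 0$, applied with $u = \phi_U/2$ and $v = \phi_L/2$, I obtain
\[
  \int_0^1 \bigl(f_U^{1/2} - f_L^{1/2}\bigr)^2 \, dy \leq \frac{1}{4} \int_0^1 (\phi_U - \phi_L)^2 \, dy,
\]
so any $L_2([0,1])$-bracket of width $2\epsilon$ for $\phi$ yields a Hellinger bracket of width $\epsilon$ for $f$. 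It therefore remains to bound the $L_2$-bracketing entropy of the class of concave functions on $[0,1]$ with values in $[-B, 0]$.

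For this I would invoke the classical bracketing entropy bound $H_{[]}(\eta, \C([0,1], -1, 0), L_2) \leq C\eta^{-1/2}$ for the class $\C([0,1], -1, 0)$ of concave functions on $[0,1]$ with values in $[-1, 0]$, in the spirit of the Gao--Wellner-type estimates underlying \citet{KimSamworth2016} and \citet{DossWellner}. Rescaling by $\phi \mapsto \phi/B$ preserves concavity and scales the $L_2$-norm by $B$, yielding $H_{[]}(2\epsilon, \C([0,1], -B, 0), L_2) \leq CB^{1/2}\epsilon^{-1/2}$, as required. The main obstacle is this last entropy bound: although it is essentially classical, its proof is non-trivial because uniformly bounded concave functions on $[0,1]$ can have arbitrarily steep slopes near the endpoints of the interval, and one must construct brackets that accommodate this behaviour, typically via piecewise linear approximation on a grid whose spacing adapts to the boundary and with a careful counting of admissible slopes.
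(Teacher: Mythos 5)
Your proposal is correct and follows essentially the same route as the paper: both proofs reduce the problem to $L_2$-bracketing of the bounded concave functions $\log f$ (via the Gao--Wellner-type bound $\eta^{-1/2}$, which the paper takes from Proposition~4 of \citet{KimSamworth2016b}), and then exponentiate the brackets using a Lipschitz estimate for $t \mapsto e^{t/2}$ on $(-\infty,B_2]$. The only cosmetic difference is that you make the affine and multiplicative normalisations explicit before bracketing, whereas the paper absorbs them into the choice $\delta := 2\epsilon e^{-B_2/2}$ and works directly on $[a,b]$.
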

\begin{proof}
Fix $\epsilon > 0$, $a < b$ and $B_1 \leq B_2$, and let $\delta := 2 \epsilon e^{-B_2/2}$.  By \citet[][Proposition~4]{KimSamworth2016b} (see also \citet{GuntuboyinaSen,DossWellner}), there exists a bracketing set $\{[\phi_{L,j}, \phi_{U,j}]:j=1,\ldots,M\}$ for the set of concave functions on $[a,b]$ that are bounded below by $B_1$ and above by $B_2$ with $\int_{a}^b (\phi_{U, j} - \phi_{L, j})^2 \, dx \leq \delta^2$ and\footnote{In fact, formally, only the case $B_1 = -B_2$ is covered by \citet[][Proposition~4]{KimSamworth2016b}, but the proof proceeds by first considering the case $B_1=-1$, $B_2=1$, so a simple scaling argument can be used to obtain the claimed result.}
\[
\log M \leq C\biggl\{\frac{(b-a)^{1/2}(B_2-B_1)}{\delta}\biggr\}^{1/2}.      
\]
Now take $f_{L, j} := e^{\phi_{L,j}}$ and $f_{U, j} := e^{\phi_{U,j}}$ for $j = 1,\ldots,M$. Since there is no loss of generality in assuming $\phi_{U,j}(x) \leq B_2$ for every $j \in \{1,\ldots,M\}$ and $x\in [a,b]$, we have
\begin{align*}
\int_a^b (f_{U, j}^{1/2} - f_{L, j}^{1/2})^2   &= \int_a^b e^{\phi_{U,j}} \bigl\{1 - e^{-(\phi_{U,j} - \phi_{L,j})/2}\bigr\}^2 \leq \frac{e^{B_2}}{4} \int_a^b (\phi_{U,j}  - \phi_{L,j})^2 \leq \frac{\delta^2}{4} e^{B_2} = \epsilon^2. 
\end{align*}
The result follows.
\end{proof}
In the case where $B_1 = -\infty$, Proposition~\ref{mb1} unfortunately gives the trivial bound $H_{[]}(\epsilon, \F([a, b], -\infty, B_2), \shel, [a, b]) \leq \infty$. It turns out however that this quantity is actually finite, as shown by the following result.
\begin{proposition}\label{meni}
There exists a universal constant $C > 0$ such that 
  \begin{equation}\label{meni.eq}
H_{[]}\bigl(\epsilon, \F([a, b], -\infty, B), \shel, [a, b]\bigr) \leq C (1 + B^{1/2}) \frac{e^{B/4} (b-a)^{1/4}}{\epsilon^{1/2}}  
  \end{equation}
for every $\epsilon > 0$, $a < b$ and $B \geq 0$.  
\end{proposition}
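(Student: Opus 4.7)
The plan is to reduce to Proposition~\ref{mb1} via a dyadic multi-scale truncation of $f$ from below, exploiting the fact that log-concavity of $f$ forces each super-level set to be an interval. For $f \in \F([a,b], -\infty, B)$ and $j = 1, 2, \ldots$, let $[\alpha_j(f),\beta_j(f)] := \{x \in [a,b] : f(x) \geq e^{B-j}\}$, so that the ``ring'' $R_j(f) := \{x : e^{B-j} \leq f(x) < e^{B-j+1}\}$ is a union of at most two subintervals of $[a,b]$ (on each of which $f$ is monotone by unimodality) and $\log f \in [B-j, B-j+1]$ throughout $R_j(f)$. I would truncate at level $J := \lceil\max\{0,B + \log((b-a)/\epsilon^2)\}\rceil$ so that the far tail $\{f < e^{B-J}\}$ contributes Hellinger squared at most $e^{B-J}(b-a) \lesssim \epsilon^2$ via the trivial bracket $[0, e^{B-J}]$, incurring no combinatorial cost.

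On each component of each $R_j(f)$ for $j = 1, \ldots, J$, Proposition~\ref{mb1} applied with $B_1 = B - j$, $B_2 = B - j + 1$ (so that $(B_2-B_1)^{1/2} = 1$) produces brackets of Hellinger squared width $\epsilon_j^2$ with log-cardinality bounded by $C e^{(B-j)/4} |R_j(f)|^{1/4}/\epsilon_j^{1/2}$. Allocating a geometric budget $\epsilon_j^2 \asymp \epsilon^2 e^{-j/2}$ with $\sum_j \epsilon_j^2 = O(\epsilon^2)$, the per-level log-count simplifies to $C e^{B/4} e^{-j/8}|R_j(f)|^{1/4}/\epsilon^{1/2}$. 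Hölder's inequality together with $\sum_j |R_j(f)| \leq b-a$ then bounds the sum over $j$ by $Ce^{B/4}(b-a)^{1/4}/\epsilon^{1/2}$, already within the slack of the claimed $1 + B^{1/2}$ factor.

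To turn this per-$f$ construction into a genuine bracketing set I would discretise the endpoints $(\alpha_j, \beta_j)$ on a grid of spacing $\eta_j \asymp \epsilon^2 e^{-(B-j)}$; the ``slop'' of length $\eta_j$ near each discretised endpoint is bracketed trivially by $[0, e^{B-j+1}]$, contributing Hellinger squared at most $\eta_j e^{B-j+1}$ per level and summing geometrically to $O(\epsilon^2)$. The nested structure $[\alpha_{j-1}, \beta_{j-1}] \subseteq [\alpha_j, \beta_j]$ is preserved by choosing grid approximations consistently, so the total combinatorial log-overhead is at most $2\sum_{j=1}^J \log((b-a)/\eta_j) \lesssim (B + \log(1/\epsilon))^2$, which is of lower order than the leading $e^{B/4}(b-a)^{1/4}/\epsilon^{1/2}$ in the nontrivial regime $(b-a)e^B > \epsilon^2$; in the trivial regime $(b-a)e^B \leq \epsilon^2$ the Hellinger radius of the class is at most $\epsilon$ and one bracket covers everything.

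The main obstacle is the simultaneous balancing of the three sources of Hellinger squared error---the Proposition~\ref{mb1} bulk brackets, the boundary slop, and the far tail---against the combinatorial discretisation cost, while ensuring the nested super-level sets are consistently approximated across levels. A single-scale truncation with one threshold $t \asymp \epsilon^2/(b-a)$ is conceptually much simpler, but yields an unwanted $\sqrt{\log(1/\epsilon)}$ from the $(B - \log t)^{1/2}$ prefactor of Proposition~\ref{mb1}; the dyadic scheme above removes this by exploiting the $e^{B_2/4}$ decay across $B_2 - B_1 = 1$ strips, at the cost of the more delicate bookkeeping just described.
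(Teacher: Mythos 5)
Your approach is essentially the same as the paper's: decompose $f$ into dyadic ``rings'' by the level of $\log f$, apply Proposition~\ref{mb1} on each ring with unit width $B_2 - B_1 = 1$, grid the super-level-set endpoints, and allocate a geometric budget $\epsilon_j^2 \asymp \epsilon^2 e^{-j/2}$. The paper packages exactly this into two auxiliary results that you effectively re-derive inline: Lemma~\ref{golo} (combining brackets across slices) and Proposition~\ref{nny} (the gridding of the two boundary points of $\{B_1 \leq \log f \leq B_2\}$). The paper also first proves the case $B = 0$ and then peels off $f\one_{\{0 < \log f \leq B\}}$, whereas you run the slices from $B$ downward directly; this is a cosmetic difference.

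There is one quantitative slip in your gridding. With $\eta_j \asymp \epsilon^2 e^{-(B-j)}$ the boundary slop per level is $\eta_j e^{B-j+1} \asymp e\,\epsilon^2$, which is a \emph{constant} multiple of $\epsilon^2$ per level and does not sum geometrically: over $J \asymp B + \log((b-a)/\epsilon^2)$ levels you get $\asymp J\epsilon^2$, too large by a factor of $J$. You need to tighten the grid by an extra geometric factor matched to your per-level Hellinger budget, e.g.\ $\eta_j \asymp \epsilon^2 e^{-(B-j)} e^{-j/2}$, which makes $\eta_j e^{B-j+1} \asymp \epsilon^2 e^{-j/2}$ genuinely summable. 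This changes the combinatorial overhead only by lower-order terms (it remains $\lesssim (B + \log((b-a)/\epsilon^2))^2$, which is dominated by $e^{B/4}(b-a)^{1/4}/\epsilon^{1/2}$ in the nontrivial regime), so the conclusion is unaffected once the spacing is corrected.
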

\begin{proof}
First consider the case $B=0$.  Fix $\epsilon > 0$ and let $k$ be the smallest integer for which $e^{-k} (b-a) \leq \epsilon^2$.  We write $f \in \F([a, b], -\infty, 0)$ as 
\begin{equation*}
  f = \sum_{j=1}^k f \one_{\{-j < \log f \leq -(j-1)\}} + f \one_{\{\log f \leq -k\}}. 
\end{equation*}
For the final term, a single bracket suffices, because $0 \leq f \one_{\{\log f \leq -k\}} \leq e^{-k}$, and $\int_a^b (e^{-k/2} - 0)^2 = e^{-k}(b-a) \leq \epsilon^2$.  Writing $\epsilon_j := \epsilon j e^{-j/4}/8$, so that $\sum_{j =1}^{\infty} \epsilon_j^2 \leq \epsilon^2/2$, it therefore follows by Lemma~\ref{golo} and Proposition~\ref{nny} below that 
\begin{align*}
  H_{[]}\bigl(\epsilon, \F([a, b], -\infty, 0),\shel, [a, b]\bigr) &\leq C(b-a)^{1/4} \sum_{j=1}^k \bigl\{1 +
    (2j - 1)^{1/2}\bigr\} \frac{e^{-(j-1)/4}}{\epsilon_j^{1/2}} \\
 &\leq C (b-a)^{1/4}  \sum_{j=1}^k j^{1/2}e^{-j/4}\epsilon_j^{-1/2} \\
&= C \frac{(b-a)^{1/4}}{\epsilon^{1/2}}\sum_{j=1}^k e^{-j/8} \leq C
 \frac{(b-a)^{1/4}}{\epsilon^{1/2}}. 
\end{align*}
This proves~\eqref{meni.eq} for the case $B = 0$.  For general $B \geq 0$, we write $f \in \F([a, b], -\infty, B)$ as 
\begin{equation*}
  f = f \one_{\{0  < \log f \leq B\}} + f \one_{\{-\infty \leq \log f \leq 0\}}.
\end{equation*}
By Lemma~\ref{golo} and Proposition~\ref{nny} again, it therefore follows that 
\begin{align*}
  H_{[]}\bigl(\epsilon, \F([a, b], -\infty, B), \shel, [a, b]\bigr)  
&\leq C(1 + B^{1/2})\frac{e^{B/4} (b-a)^{1/4}}{\epsilon^{1/2}} + C\frac{(b-a)^{1/4}}{\epsilon^{1/2}} \\
&\leq C(1 + B^{1/2})\frac{e^{B/4} (b-a)^{1/4}}{\epsilon^{1/2}},
\end{align*}
as required.
\end{proof}
We need two more results for the proof of Proposition~\ref{meni}. 
\begin{lemma}\label{golo}
Suppose $\G, \G_1, \dots, \G_k$ are classes of non-negative functions on $S \subseteq \R$ such that  
\begin{equation*}
    \G \subseteq \G_1 + \dots + \G_k. 
\end{equation*}
Then for every $\epsilon, \epsilon_1, \dots, \epsilon_k > 0$ such that $\sum_{j=1}^k \epsilon_j^2 \leq \epsilon^2$, we have  
  \begin{equation*}
    H_{[]}(\epsilon, \G, \shel, S) \leq \sum_{j=1}^k H_{[]}(\epsilon_j,
    \G_j, \shel, S).  
  \end{equation*}
\end{lemma}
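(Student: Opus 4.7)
The plan is to build brackets for $\mathcal{G}$ by summing brackets for the individual classes $\mathcal{G}_j$ and verifying the Hellinger size using Cauchy--Schwarz.

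Concretely, for each $j \in \{1,\ldots,k\}$, let $\{[g_{L,j,i_j}, g_{U,j,i_j}] : i_j = 1, \ldots, M_j\}$ be a minimal $\epsilon_j$-Hellinger bracketing set of $\mathcal{G}_j$ over $S$, so that $M_j = N_{[]}(\epsilon_j, \mathcal{G}_j, \shel, S)$. For each multi-index $(i_1, \ldots, i_k) \in \prod_{j=1}^k \{1, \ldots, M_j\}$, define
\[
G_L^{(i_1,\ldots,i_k)} := \sum_{j=1}^k g_{L,j,i_j}, \qquad G_U^{(i_1,\ldots,i_k)} := \sum_{j=1}^k g_{U,j,i_j}.
\]
Given any $g \in \mathcal{G}$, write $g = g_1 + \cdots + g_k$ with $g_j \in \mathcal{G}_j$, and for each $j$ choose $i_j^*$ such that $g_{L,j,i_j^*} \leq g_j \leq g_{U,j,i_j^*}$ pointwise on $S$. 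Summing over $j$ shows $G_L^{(i_1^*,\ldots,i_k^*)} \leq g \leq G_U^{(i_1^*,\ldots,i_k^*)}$ on $S$.

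The main (and essentially only) computational step is to bound the Hellinger size of such a sum-bracket. Using that the $g_{L,j,i_j}, g_{U,j,i_j}$ are non-negative with $g_{U,j,i_j} \geq g_{L,j,i_j}$, and applying the Cauchy--Schwarz inequality in the form $\sum_j \sqrt{a_j b_j} \leq \bigl(\sum_j a_j\bigr)^{1/2} \bigl(\sum_j b_j\bigr)^{1/2}$ with $a_j = g_{U,j,i_j}(x)$ and $b_j = g_{L,j,i_j}(x)$, we obtain pointwise
\[
\bigl\{(G_U^{(i_1,\ldots,i_k)})^{1/2} - (G_L^{(i_1,\ldots,i_k)})^{1/2}\bigr\}^2 \leq \sum_{j=1}^k \bigl(g_{U,j,i_j}^{1/2} - g_{L,j,i_j}^{1/2}\bigr)^2.
\]
Integrating over $S$ and using $\sum_{j=1}^k \epsilon_j^2 \leq \epsilon^2$ gives an $\epsilon$-Hellinger bracket.

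Finally, the total number of brackets is $\prod_{j=1}^k M_j$, so taking logarithms yields
\[
H_{[]}(\epsilon, \mathcal{G}, \shel, S) \leq \sum_{j=1}^k \log M_j = \sum_{j=1}^k H_{[]}(\epsilon_j, \mathcal{G}_j, \shel, S),
\]
as required. The only substantive step is the Cauchy--Schwarz application bounding the Hellinger distance between the sums; everything else is bookkeeping.
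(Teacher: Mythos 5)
Your proof is correct and follows essentially the same route as the paper: form the product brackets by summing one bracket from each class, then control the Hellinger size by the pointwise inequality $(a^{1/2}-b^{1/2})^2 \leq \sum_j (a_j^{1/2}-b_j^{1/2})^2$ for $a=\sum_j a_j$, $b=\sum_j b_j$ with $a_j,b_j\geq 0$, and integrate. The only (cosmetic) difference is in how that elementary inequality is justified: the paper notes it ``can be proved by induction,'' whereas you observe directly that after expanding it reduces to $\sum_j \sqrt{a_j b_j} \leq (\sum_j a_j)^{1/2}(\sum_j b_j)^{1/2}$, which is Cauchy--Schwarz; this is a cleaner and more transparent derivation of the same fact.
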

\begin{proof}
Fix $\epsilon, \epsilon_1, \dots, \epsilon_k > 0$ such that $\sum_{j=1}^k \epsilon_j^2 \leq \epsilon^2$. For each $j \in \{1,\ldots,k\}$, there exists an $\epsilon_j$-Hellinger bracketing set $\{[g_{L,j,\ell}, g_{U,j,\ell}], \ell = 1,\dots, M_j\}$ for $\mathcal{G}_j$ with $M_j := N_{[]}(\epsilon_j, \G_j, d_{\mathrm{H}})$.  For $\ell = (\ell_1,\ldots,\ell_k) \in \times_{j=1}^k \{1,\ldots,M_j\}$, we now define
\[
h_{L,\ell}(x) := \sum_{j=1}^k g_{L,j,\ell_j}, \quad h_{U,\ell}(x) := \sum_{j=1}^k g_{U,j,\ell_j}.
\]
If $g \in \mathcal{G}$, then $g = \sum_{j=1}^k g_j$ for some $g_j \in \mathcal{G}_j$, so for each $j$ we can find $\ell_j^* \in \{1,\ldots,M_j\}$ such that $g_{L,j,\ell_j^*} \leq g_j \leq g_{U,j,\ell_j^*}$.  Thus, writing $\ell^* = (\ell_1^*,\ldots,\ell_k^*)$, we have $h_{L,\ell^*} \leq g \leq h_{U,\ell^*}$.  Moreover, 
\[
d_{\mathrm{H}}(h_{U,\ell},h_{L,\ell}) = \int_{\mathcal{S}} (h_{U,\ell}^{1/2} - h_{L,\ell}^{1/2})^2 \leq \sum_{j=1}^k \int_{\mathcal{S}} (g_{U,j,\ell_j}^{1/2} - g_{L,j,\ell_j}^{1/2})^2 \leq \sum_{j=1}^k \epsilon_j^2 \leq \epsilon^2,
\]
where in the first inequality we have used the fact that for $a_1,\ldots,a_k,b_1,\ldots,b_k \geq 0$ with $a := \sum_{j=1}^k a_j$, $b := \sum_{j=1}^k b_j$, we have
\[
(a^{1/2}-b^{1/2})^2 \leq \sum_{j=1}^k (a_j^{1/2} - b_j^{1/2})^2,
\]
which can be proved by induction.  The result follows.  
\end{proof}

\begin{lemma}\label{triv}
Let  $S, S_1, S_2, \dots S_k$ denote measurable subsets of $\mathbb{R}$ such that $S \subseteq \cup_{j=1}^{k} S_j$. Let $\F_0$ denote an arbitrary class of non-negative functions on $\cup_{j=1}^{k} S_j$ and let $\G := \{e^{\tilde{\phi}_a}: e^\phi \in \F_0\}$, where $\tilde{\phi}_a$ is defined in~\eqref{Eq:phitildea}.  Let $\alpha_j := \inf \{x : x \in S_j\}$ and suppose that $\epsilon,\epsilon_1,\ldots,\epsilon_k > 0$ satisfy
\[
\sum_{j=1}^{k} e^{-\alpha_j} \epsilon_j^2 \leq (1 - e^{-a})\epsilon^2. 
\]
Then
\begin{equation}\label{triv.eq}
  H_{[]}(\epsilon, \F_0, \shel, S) \leq \sum_{j=1}^k H_{[]}(\epsilon_j,\G, \shel, S_j). 
\end{equation}
\end{lemma}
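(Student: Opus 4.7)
The plan is a straightforward bracket-gluing argument that exploits the explicit relationship between $e^{\tilde{\phi}_a}$ and $e^{\phi}$ implicit in the definition~\eqref{Eq:phitildea}, namely
\[
e^{\phi(x)} = \frac{e^{-x}}{1-e^{-a}}\,e^{\tilde{\phi}_a(x)}.
\]
So if I have a Hellinger bracket for $\mathcal{G}$ on $S_j$, multiplying its endpoints by the positive weight $w(x) := e^{-x}/(1-e^{-a})$ yields a bracket for $\mathcal{F}_0$ on $S_j$, with Hellinger size inflated by a factor of at most $\sqrt{e^{-\alpha_j}/(1-e^{-a})}$, since $w(x) \leq e^{-\alpha_j}/(1-e^{-a})$ for $x \in S_j$.

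To glue these across $j$, first partition $S$ into disjoint measurable pieces: set $T_1 := S \cap S_1$ and $T_j := (S \cap S_j) \setminus (T_1 \cup \cdots \cup T_{j-1})$ for $j \geq 2$, so that $S = \bigsqcup_{j=1}^k T_j$ with $T_j \subseteq S_j$. For each $j$, let $M_j := N_{[]}(\epsilon_j, \mathcal{G}, d_{\mathrm H}, S_j)$ and fix an $\epsilon_j$-Hellinger bracketing set $\{[g_{L,j,\ell}, g_{U,j,\ell}] : \ell = 1,\ldots,M_j\}$ of $\mathcal{G}$ on $S_j$. For each multi-index $\ell = (\ell_1,\ldots,\ell_k) \in \prod_j \{1,\ldots,M_j\}$, define functions on $S$ by
\[
h_{L,\ell}(x) := w(x)\, g_{L,j,\ell_j}(x), \qquad h_{U,\ell}(x) := w(x)\, g_{U,j,\ell_j}(x) \qquad \text{for } x \in T_j.
\]
This gives a family of $\prod_j M_j$ bracket pairs on $S$.

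To verify that these form an $\epsilon$-Hellinger bracketing set for $\mathcal{F}_0$ on $S$, fix $f = e^{\phi} \in \mathcal{F}_0$, and for each $j$ choose $\ell_j^* \in \{1,\ldots,M_j\}$ such that $g_{L,j,\ell_j^*} \leq e^{\tilde{\phi}_a} \leq g_{U,j,\ell_j^*}$ on $S_j$. Setting $\ell^* := (\ell_1^*,\ldots,\ell_k^*)$, the relation $f(x) = w(x)\,e^{\tilde{\phi}_a(x)}$ immediately gives $h_{L,\ell^*}(x) \leq f(x) \leq h_{U,\ell^*}(x)$ for $x \in T_j$, and hence for all $x \in S$. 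For the Hellinger size, using disjointness of the $T_j$, the bound $w(x) \leq e^{-\alpha_j}/(1-e^{-a})$ on $T_j \subseteq S_j$, and the bracketing property on $S_j$,
\begin{align*}
\int_S \bigl(h_{U,\ell}^{1/2} - h_{L,\ell}^{1/2}\bigr)^2 \, dx
&= \sum_{j=1}^k \int_{T_j} w(x)\bigl(g_{U,j,\ell_j}^{1/2} - g_{L,j,\ell_j}^{1/2}\bigr)^2 dx \\
&\leq \sum_{j=1}^k \frac{e^{-\alpha_j}}{1-e^{-a}} \int_{S_j} \bigl(g_{U,j,\ell_j}^{1/2} - g_{L,j,\ell_j}^{1/2}\bigr)^2 dx
\leq \frac{1}{1-e^{-a}}\sum_{j=1}^k e^{-\alpha_j}\epsilon_j^2 \leq \epsilon^2,
\end{align*}
by the hypothesis. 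Therefore $N_{[]}(\epsilon, \mathcal{F}_0, d_{\mathrm H}, S) \leq \prod_j M_j$, and taking logarithms yields~\eqref{triv.eq}.

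The argument is essentially routine bracket-combining; the only subtlety is recognising that the reweighting constant needed to convert $\mathcal{G}$-brackets into $\mathcal{F}_0$-brackets is bounded uniformly on each $S_j$ by $e^{-\alpha_j}/(1-e^{-a})$, which is precisely why the hypothesis enters in the form displayed. I do not expect any real obstacle here.
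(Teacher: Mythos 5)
Your proposal is correct and follows essentially the same bracket-gluing argument as the paper: disjointify the cover (you partition $S$ directly into $T_j\subseteq S_j$, the paper disjointifies the $S_j$'s, but these are interchangeable), reweight $\mathcal{G}$-brackets by $w(x)=e^{-x}/(1-e^{-a})$ to obtain $\mathcal{F}_0$-brackets, and use the bound $w(x)\leq e^{-\alpha_j}/(1-e^{-a})$ on $S_j$ to control the total Hellinger size. No gap.
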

\begin{proof}
We may assume that $S_1, \dots, S_k$ are pairwise disjoint, because otherwise we can work with the sets $S_1' := S_1$ and $S_j' := S_j \setminus \cup_{\ell=1}^{j-1} S_\ell$ for $j=2,\ldots,k$.  For each $j=1,\ldots,k$, let $\{[f_{L, \ell}^{(j)},f_{U, \ell}^{(j)}]: \ell = 1, \dots, N_{[]}(\epsilon_j,\mathcal{G},d_{\mathrm{H}},S_j)\}$ denote an $\epsilon_j$-Hellinger bracketing set for the class $\G$ over $S_j$.  Now, for $x \in S_j$ and $\ell = (\ell_1,\ldots,\ell_k) \in \bigl\{1,\ldots,N_{[]}(\epsilon_1,\mathcal{G},d_{\mathrm{H}},S_1)\bigr\} \times \ldots \times \bigl\{1,\ldots,N_{[]}(\epsilon_k,\mathcal{G},d_{\mathrm{H}},S_k)\bigr\}$, set 
  \begin{equation*}
    f_{L, \ell}(x) := \frac{e^{-x} f_{L, \ell_j}^{(j)}(x)}{1 - e^{-a}} ~~
    \text{ and } ~~ f_{U, \ell}(x) := \frac{e^{-x} f_{U, \ell_j}^{(j)}(x)}{1 - 
      e^{-a}} . 
  \end{equation*}
Then for every $f \in \F_0$, there exists $\ell = (\ell_1,\ldots,\ell_k)$ such that $f_{L, \ell} \leq f \leq f_{U, \ell}$.  Moreover,
\begin{align*}
 \int_{S} (f_{U, \ell}^{1/2} - f_{L, \ell}^{1/2})^2 &\leq \sum_{j=1}^k \int_{S_j} \frac{e^{-x}}{1 - e^{-a}}\bigl\{f_{U,\ell_j}^{(j)}(x)^{1/2} - f_{L, \ell_j}^{(j)}(x)^{1/2}\bigr\}^2 \, dx \leq \sum_{j=1}^k \frac{e^{-\alpha_j}}{1 - e^{-a}} \epsilon_j^2 \leq \epsilon^2,
  \end{align*}
as required.
\end{proof}
The following result is also used in the proof of Proposition~\ref{meni}. For $a < b$ and $-\infty < B_1 \leq B_2 < \infty$, let $\F'([a, b], B_1, B_2)$ denote the class of all functions on $[a,b]$ of the form $x \mapsto f(x) \one_{\{B_1 \leq \log f(x) \leq B_2\}}$ for some non-negative function $f$ for which $\log f: [a, b] \rightarrow [-\infty, \infty)$ is concave.  
\begin{proposition}\label{nny}
  There exists a universal constant $C > 0$ such that 
  \begin{equation*}
    H_{[]}(\epsilon, \F'([a, b], B_1, B_2), \shel, [a, b]) \leq C \{1 + (B_2-B_1)^{1/2}\} \frac{e^{B_2/4}(b-a)^{1/4}}{\epsilon^{1/2}} 
  \end{equation*}
  for every $\epsilon > 0, a < b$ and $-\infty < B_1 \leq B_2 < \infty$. 
\end{proposition}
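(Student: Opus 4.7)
The plan is to bracket $\F'([a,b], B_1, B_2)$ via three ingredients: a rescaling that reduces to the case $B_1 = 0$, a decomposition of $f \in \F'$ into at most two pieces each supported on a single subinterval (made possible by the fact that $\{B_1 \leq \log g \leq B_2\}$ is a union of at most two intervals when $\log g$ is concave), and a discretization of the support endpoints combined with repeated application of Proposition~\ref{mb1}.

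First I would observe that the map $f \mapsto e^{-B_1} f$ is a bijection between $\F'([a,b], B_1, B_2)$ and $\F'([a,b], 0, B_2 - B_1)$ under which the Hellinger distance scales by $e^{-B_1/2}$, so
\[
H_{[]}(\epsilon, \F'([a,b], B_1, B_2), \shel, [a,b]) = H_{[]}(e^{-B_1/2}\epsilon, \F'([a,b], 0, B_2 - B_1), \shel, [a,b])
\]
and the target bound transforms consistently. Writing $B := B_2 - B_1 \geq 0$, it therefore suffices to prove the bound for $B_1 = 0$. For any $f = g\cdot\one_{\{0 \leq \log g \leq B\}} \in \F'([a,b], 0, B)$ with $\log g$ concave, the set $\{x:\log g(x) \geq 0\} = [c_1, c_2]$ is an interval and $\{x:\log g(x) > B\} = (d_1, d_2) \subseteq [c_1, c_2]$ is an open subinterval, so the support of $f$ equals $[c_1, d_1] \cup [d_2, c_2]$. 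Writing $f = f\one_{[c_1,d_1]} + f\one_{[d_2,c_2]}$, both summands lie in $\mathcal{H} := \{0\} \cup \bigcup_{[c,d] \subseteq [a,b]} \{h\one_{[c,d]} : h \in \F([c,d], 0, B)\}$, so $\F'([a,b], 0, B) \subseteq \mathcal{H} + \mathcal{H}$. Lemma~\ref{golo} then reduces the problem to bounding $H_{[]}(\epsilon/\sqrt{2}, \mathcal{H}, \shel, [a,b])$.

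Next I would discretize the support endpoints on a grid of spacing $\Delta := e^{-B}\epsilon^2/32$, giving $N + 1 \leq (b-a)/\Delta + 2$ grid points. For each pair $\tilde{c} \leq \tilde{d}$ of grid points, Proposition~\ref{mb1} furnishes an $(\epsilon/4)$-Hellinger bracketing set for $\F([\tilde{c}, \tilde{d}], 0, B)$ of log-cardinality at most $CB^{1/2}e^{B/4}(b-a)^{1/4}/\epsilon^{1/2}$. Given $h\one_{[c,d]} \in \mathcal{H}$ with $d - c \geq \Delta$, I would take $\tilde{c}$ and $\tilde{d}$ as the smallest (resp.\ largest) grid point $\geq c$ (resp.\ $\leq d$), pick such a bracket $[h_L, h_U]$ for $h|_{[\tilde{c}, \tilde{d}]}$, and form
\[
f_L := h_L \one_{[\tilde{c}, \tilde{d}]}, \qquad f_U := h_U \one_{[\tilde{c}, \tilde{d}]} + e^B \one_{[\tilde{c} - \Delta, \tilde{c}) \cup (\tilde{d}, \tilde{d} + \Delta]}.
\]
On the cushion $[c, \tilde{c}) \cup (\tilde{d}, d]$ we have $h \leq e^B$, so the bracket is valid, with Hellinger cost $(\epsilon/4)^2 + 2\Delta e^B = \epsilon^2/8 < (\epsilon/\sqrt{2})^2$. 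Short-support functions with $d - c < \Delta$ are covered by trivial ``degenerate'' brackets $[0, e^B \one_{[x_i, x_i + 2\Delta]}]$ centred at an appropriate grid point $x_i$. Summing over the $O(N^2)$ pairs of grid points yields
\[
H_{[]}(\epsilon/\sqrt{2}, \mathcal{H}, \shel, [a,b]) \leq 2\log(N+2) + CB^{1/2} e^{B/4}(b-a)^{1/4}/\epsilon^{1/2}.
\]

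The hard part will be to verify that the discretization overhead $2\log(N+2)$ is absorbed by the ``$1$''-contribution in the target bound. Setting $K := e^B(b-a)/\epsilon^2$, one has $N + 2 \leq 35K$ when $K \geq 1$, while the target bound equals $C'(1 + B^{1/2}) K^{1/4}$; since $\log K \leq (4/e) K^{1/4}$ for $K \geq 1$, we obtain $2\log(N+2) \leq C K^{1/4}$, which is dominated by $C'K^{1/4}$ (the ``$1$'' part of $(1+B^{1/2})K^{1/4}$) for a suitably large universal $C'$. In the complementary regime $K < 1$ (that is, $\epsilon^2 > e^B(b-a)$), the single bracket $[0, e^B \one_{[a,b]}]$ satisfies $\int_a^b e^B = e^B(b-a) < \epsilon^2$, so $H_{[]}(\epsilon, \F') = 0$ and the bound holds trivially. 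Undoing the initial rescaling then completes the argument.
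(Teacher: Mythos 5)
Your proposal is correct and follows essentially the same strategy as the paper's own proof: both argue that the support of a function in $\F'$ is a union of at most two intervals, reduce via Lemma~\ref{golo} to a class of log-concave functions supported on a single subinterval, discretize the support endpoints on a grid of mesh $\sim e^{-B_2}\epsilon^2$, invoke Proposition~\ref{mb1} on each grid-determined subinterval, patch up the cushion near the support boundary with the crude upper bound $e^{B_2}$, handle very short supports with degenerate brackets, and absorb the $\log$ of the grid size into the ``$1$'' factor using $\log x \lesssim x^{1/4}$ (together with the trivial one-bracket bound when $\epsilon$ is large). The only real differences from the paper are cosmetic: you rescale at the outset to normalise $B_1 = 0$, whereas the paper carries $B_1, B_2$ throughout, and you perform the $\mathcal{H}+\mathcal{H}$ decomposition first whereas the paper deals with its analogous class $\F''$ first and invokes Lemma~\ref{golo} at the very end. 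One small technical point worth tidying up: with support threshold ``$d - c \geq \Delta$'' for the nondegenerate branch, you can still end up with $\tilde c = \tilde d$ (exactly one grid point inside $[c,d]$), which makes $\F([\tilde c, \tilde d], 0, B)$ degenerate; the paper avoids this by using the threshold $|a' - b'| > 2\delta$, which forces two distinct grid points inside the support. The easy fix is to widen your degenerate-bracket regime to $d - c < 2\Delta$ (with a correspondingly smaller $\Delta$, or a wider degenerate bracket).
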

\begin{proof}
We initially prove the result for the subclass $\F''([a, b], B_1, B_2) \subseteq \F'([a, b], B_1, B_2)$ consisting of all non-negative functions $f$ on $[a, b]$ such that $\log f$ is concave and such that for every $x \in [a, b]$, either $B_1 \leq \log f(x) \leq B_2$ or $f(x) = 0$.  Assume for now that $\epsilon \in (0,2e^{B_2/2}(b-a)^{1/2}]$.  Let $\eta := \epsilon/2^{1/2}$ and $\delta := \epsilon^2 e^{-B_2}/4$. Let $G$ be a $\delta$-grid in $[a, b]$ with size $|G| \leq 2 + (b-a)/\delta$. We assume that the endpoints $a,b$ are both included in $G$.  By Proposition~\ref{mb1}, for every $g_1,g_2 \in G$ with $g_1 < g_2$, we can construct $\eta$-Hellinger brackets $\{[f_{L, j}, f_{U, j}]: j = 1, \dots, M\}$ for $\F([g_1, g_2],B_1, B_2)$ on the interval $[g_1, g_2]$, with
\begin{equation}\label{zo}
\log M \leq C (B_2-B_1)^{1/2} \frac{e^{B_2/4}(g_2 - g_1)^{1/4}}{\eta^{1/2}} \leq C (B_2-B_1)^{1/2}\frac{e^{B_2/4}(b - a)^{1/4}}{\eta^{1/2}}. 
\end{equation}
If $g_1 = a$, let $g_1' := a$; otherwise let $g_1' \in G$ be the closest point to $g_1$ that is strictly smaller than $g_1$. Similarly, if $g_2 = b$, let $g_2' := b$; otherwise let $g_2' \in G$ be the closest point to $g_2$ that is strictly larger than $g_2$.  Thus $|g_i - g_i'| \leq \delta$ for $i = 1, 2$. We extend the definitions of $f_{U, j}$ and $f_{L,j}$ to the whole of $[a,b]$ by defining  
\[
f_{U, j}(x) := \left\{ 
  \begin{array}{ll} 
   e^{B_2} & \mbox{for $x \in [g_1', g_1) \cup (g_2, g_2']$} \\ 
   0       & \mbox{for $x  \notin [g_1', g_2']$,}
\end{array} \right.
\]
and $f_{L, j}(x) := 0$ for $x \notin [g_1, g_2]$.  Then 
\[
\int_a^b \bigl(f_{U, j}^{1/2} - f_{L, j}^{1/2}\bigr)^2 \leq e^{B_2} (|g_1 - g_1'| + |g_2 - g_2'|) + \eta^2 \leq 2 \delta e^{B_2} + \eta^2 =\epsilon^2. 
\]
We take the union of these brackets over $g_1, g_2 \in G$ with $g_1 < g_2$. 

In addition to these brackets, we also consider, for each $g_1,g_2 \in G$ with $g_1 < g_2$ and $|g_1 - g_2| \leq 4 \delta$, one bracketing pair $(f_L, f_U)$ with $f_L(x) := 0$ for all $x \in [a,b]$ and $f_U(x) := e^{B_2} \one_{\{x \in [g_1,g_2]\}}$. The size of this bracket is given by 
\begin{equation*}
 \int_a^b (f_U^{1/2} - f_L^{1/2})^2 = e^{B_2} |g_1 - g_2| \leq 4 \delta e^{B_2} = \epsilon^2.  
\end{equation*}
We claim that for every $f \in \F''([a, b], B_1, B_2)$, there exists a bracketing pair for which $f_{L, j} \leq f \leq f_{U, j}$.  To see this, fix $f \in \F''([a, b], B_1, B_2)$ with support $[a', b'] \subseteq [a, b]$.  Suppose for now that $|a' - b'| \leq 2 \delta$.  Set $g_1 := \sup\{g \in G: g \leq a'\}$ and $g_2 := \inf\{g \in G: g \geq b'\}$, so that $|g_1 - g_2| \leq 4 \delta$.  Then pair $(f_L, f_U)$ with $f_L(x) = 0$ for all $x \in [a,b]$ and $f_U(x) = e^{B_2}\one_{\{x \in [g_1,g_2]\}}$ satisfies $f_L(x) \leq f(x) \leq f_U(x)$ for all $x \in [a,b]$.
 
On the other hand, now suppose that $|a' - b'| > 2 \delta$. Let $g_1 := \inf\{g \in G: g \geq a'\}$ and $g_1' := \sup\{g \in G: g \leq a'\}$. Similarly let $g_2 := \sup\{g \in G: g \leq b'\}$ and $g_2' := \inf\{g \in G: g \geq b'\}$.  By our construction, there exists a pair $(f_{L, j}, f_{U, j})$ with $f_{L,j}(x) \leq f(x) \leq f_{U,j}(x)$ for all $x \in [g_1,g_2]$.  But then our definition of $f_{L,j},f_{U,j}$ on $[a,b] \setminus [g_1,g_2]$ in fact ensures that $f_{L, j}(x) \leq f(x) \leq f_{U, j}(x)$ for all $x \in [a,b]$.

The cardinality $N$ of our bracketing set satisfies
\[
\log N \leq \log(|G|^2 M + |G|^2) \leq \log \biggl(2 \Bigl(2 + \frac{b-a}{\delta}\Bigr)^2\biggr) + C (B_2-B_1)^{1/2}\frac{e^{B_2/4}(b - a)^{1/4}}{\eta^{1/2}}.
\]
Since $\epsilon \leq 2 e^{B_2/2}(b-a)^{1/2}$, we have $\delta \leq (b-a)$, so 
\begin{align*}
\log N &\leq \log \biggl(12\Bigl(\frac{b-a}{\delta}\Bigr)^2\biggr) + C (B_2-B_1)^{1/2}\frac{e^{B_2/4}(b - a)^{1/4}}{\eta^{1/2}} \\
&\leq 8 \log \biggl(192^{1/8} \frac{e^{B_2/4}(b-a)^{1/4}}{\epsilon^{1/2}}\biggr) + C (B_2-B_1)^{1/2}\frac{e^{B_2/4}(b - a)^{1/4}}{\epsilon^{1/2}} \\
&\leq C\{1+(B_2-B_1)^{1/2}\}\frac{e^{B_2/4}(b - a)^{1/4}}{\epsilon^{1/2}},
\end{align*}
as required.  Now suppose that $\epsilon > 2 e^{B_2/2}(b-a)^{1/2}$, and consider the single bracket $f_L(x) := 0$ and $f_U(x) := e^{B_2}$ for all $x \in [a,b]$. Then $d_{\mathrm{H}}^2(f_U,f_L) = e^{B_2}(b-a) \leq \epsilon^2$, and it follows that $H_{[]}(\epsilon, \F''([a, b], B_1, B_2), \shel,  [a, b]) = 0$ in this case. This proves the result for the subclass $\F''([a, b], B_1,B_2)$.

Now consider an arbitrary function in $\F'([a, b], B_1,B_2)$ of the form $f \one_{\{B_1 \leq \log f \le B_2\}}$ for some non-negative $f$ for which $\log f: [a, b] \rightarrow [-\infty, \infty)$ is concave.  Then $\{x:B_1 \leq \log f(x) \leq B_2\}$ can be written as a union of at most two intervals. It therefore follows that
\begin{equation*}
f \one_{\{B_1 \leq \log f \le B_2\}} \in \F''([a, b], B_1, B_2) + \F''([a, b], B_1, B_2). 
\end{equation*}
Hence, by Lemma~\ref{golo}, 
\begin{equation*}
H_{[]}\bigl(\epsilon, \F'([a, b], B_1, B_2), \shel, [a, b]\bigr) \leq 2 H_{[]}\bigl(\epsilon/2^{1/2},\F''([a, b], B_1, B_2), \shel, [a, b]\bigr),
\end{equation*}
so the result follows.  
\end{proof}

\subsubsection{Auxiliary result for the proof of Theorem~\ref{k1}}
\label{Sec:vdg}

The following is the key empirical processes result used in the proof of Theorem~\ref{k1}.  
\begin{theorem}[\citet{vandegeerbook}, Corollary~7.5]
\label{imee}
Let $f_0 \in \F$ and let $\F(f_0, \delta)  := \bigl\{f \in \F: f \ll f_0, d_{\mathrm{H}}(f, f_0) \leq \delta \bigr\}$. Suppose $\Psi: (0, \infty) \rightarrow (0, \infty)$ is a function such that     
\begin{equation*}
  \Psi(\delta) \geq \max \biggl\{\delta, \int_0^{\delta} H_{[]}^{1/2}\bigl(2^{1/2} \epsilon, \F(f_0, 2\delta), \shel\bigr) \, d\epsilon\biggr\} \qt{for every $\delta > 0$} 
\end{equation*}
and such that $\delta \mapsto \delta^{-2}\Psi(\delta)$ is decreasing on $(0,\infty)$.  Let $\hat{f}_n$ denote the maximum likelihood estimator over $\mathcal{F}$ based on $X_1,\ldots,X_n \stackrel{\mathrm{iid}}{\sim} f_0$.  If $\delta_* > 0$ is such that $n^{1/2} \delta_*^2 \geq C \Psi(\delta_*)$, then for every $\delta \geq \delta_*$,
\begin{equation*}
  \P \bigl\{ \kld(\hat f_n , f_0) > \delta^2 \bigr\} \leq C \exp \biggl(\frac{-n \delta^2}{C^2} \biggr). 
\end{equation*}
\end{theorem}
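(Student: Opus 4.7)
The plan is to adapt the standard peeling-and-chaining argument of \citet[][Chapter~7]{vandegeerbook} to the functional $\kld$. Writing $\P_n g := n^{-1}\sum_{i=1}^n g(X_i)$ and $\P_{f_0} g := \int g f_0$, I would first derive the fundamental inequality
\begin{equation*}
\kld(\hat f_n, f_0) + \hel(\hat f_n, f_0) \leq (\P_n - \P_{f_0}) \log(\hat f_n/f_0).
\end{equation*}
This follows from the tautology $\kld(\hat f_n, f_0) = \P_n\log(\hat f_n/f_0)$ together with the elementary bound $\log v \leq 2(v^{1/2}-1)$, which gives $\P_{f_0}\log(\hat f_n/f_0) \leq -\hel(\hat f_n,f_0)$ whenever $\hat f_n \ll f_0$; since $\hat f_n$ is supported on the convex hull of the data, which is contained in the support of $f_0$ almost surely, this absolute continuity holds with probability one.

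Next I would peel. For $\delta \geq \delta_*$ and $j \geq 1$, set $A_j := \{2^{j-1}\delta < \shel(\hat f_n, f_0) \leq 2^j \delta\}$ and $A_0 := \{\shel(\hat f_n, f_0) \leq \delta\}$. On $A_j$ with $j \geq 1$ the fundamental inequality forces
\begin{equation*}
\sup_{f \in \F(f_0, 2^j\delta)} (\P_n - \P_{f_0})\log(f/f_0) \geq \delta^2 + 2^{2(j-1)}\delta^2,
\end{equation*}
and an analogous bound of the same flavour holds on $A_0$. A truncated maximal inequality for bracketing classes (van de Geer's Theorem~5.11) combined with the bracketing integral bound $\int_0^{2^j\delta} H_{[]}^{1/2}(2^{1/2}\epsilon, \F(f_0, 2^{j+1}\delta), \shel)\, d\epsilon \leq \Psi(2^j\delta)$, guaranteed by the hypothesis on $\Psi$, then produces a sub-Gaussian tail with variance proxy of order $(2^j\delta)^2/n$ on the relevant event, whence
\begin{equation*}
\P\bigl(A_j \cap \{\kld(\hat f_n, f_0) > \delta^2\}\bigr) \leq C \exp(-c n 2^{2j} \delta^2).
\end{equation*}

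The calibration step then closes the argument. The assumption that $\delta \mapsto \delta^{-2}\Psi(\delta)$ is decreasing, combined with $n^{1/2} \delta_*^2 \geq C\Psi(\delta_*)$, yields $n^{1/2}(2^j\delta)^2 \geq C\Psi(2^j\delta)$ for all $j \geq 0$ and $\delta \geq \delta_*$, which ensures that the expected supremum of the empirical process on each shell is dominated by $2^{2j}\delta^2$, so the Bernstein bound indeed produces the geometric rate above. Summing over $j \geq 0$ as a convergent geometric series delivers the claimed tail $C\exp(-n\delta^2/C^2)$. The main technical obstacle is the truncation step needed to invoke Bernstein's inequality, since $\log(f/f_0)$ is not uniformly bounded on $\F(f_0, 2^j\delta)$; the usual resolution is to truncate at a level of order $\log n$ and handle the excess separately, and this excess is precisely what Lemma~\ref{ist} controls at the rate $(\log n)/n$ in the log-concave setting, which is already absorbed into $\delta_*^2$.
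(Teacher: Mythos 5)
Your proposal takes a fundamentally different route from the paper. The paper does not reprove van de Geer's Corollary~7.5; it takes it as a black box and supplies only the brief translation needed to match hypotheses. Specifically, van de Geer's result is phrased in terms of the bracketing entropy of the ``smoothed'' class $\bar{\mathcal{F}}(f_0,\delta) := \bigl\{\tfrac{f+f_0}{2}: f \in \mathcal{F}, f \ll f_0, d_{\mathrm{H}}\bigl(\tfrac{f+f_0}{2}, f_0\bigr) \leq \delta\bigr\}$, and the paper's entire argument consists of two observations: that brackets for $\F(f_0,2\delta)$ induce brackets of no larger Hellinger size for $\bar{\mathcal{F}}(f_0,\delta)$ via $\bigl(\tfrac{f_U+f_0}{2}\bigr)^{1/2} - \bigl(\tfrac{f_L+f_0}{2}\bigr)^{1/2} \leq 2^{-1/2}(f_U^{1/2}-f_L^{1/2})$, and that $d_{\mathrm{H}}\bigl(\tfrac{f+f_0}{2},f_0\bigr)\leq\delta$ forces $d_{\mathrm{H}}(f,f_0)\leq 2\delta$ by joint convexity of Hellinger distance. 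You instead attempt to re-derive the corollary from scratch via a peeling-and-chaining argument. That is a legitimate strategy in principle, and your fundamental inequality $\kld(\hat f_n,f_0) + \hel(\hat f_n,f_0) \leq (\P_n-\P_{f_0})\log(\hat f_n/f_0)$ and the peeling decomposition are both correct.

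However, your sketch has a genuine gap precisely at the point you flag. You must control $\sup_{f\in\F(f_0,2^j\delta)}(\P_n-\P_{f_0})\log(f/f_0)$, and the functions $\log(f/f_0)$ are unbounded both above and below, uniformly over the class. You propose to truncate at level $\log n$ and absorb the excess using Lemma~\ref{ist}; but Lemma~\ref{ist} controls $\sup_x\log\hat f_n(x) + \sup_{x\in[X_{(1)},X_{(n)}]}\log(1/f_0(x))$ for the single estimator $\hat f_n$, not the truncation error of a maximal inequality taken uniformly over $\F(f_0,2^j\delta)$. Invoking it at this step is circular: you would be using information about where $\hat f_n$ lands to control an empirical process supremum whose purpose is to pin down where $\hat f_n$ lands. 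The standard resolution, and the reason van de Geer's corollary is phrased in terms of $\bar{\mathcal{F}}(f_0,\delta)$ in the first place, is to run the whole argument through the smoothed functions $\tfrac12\log\tfrac{f+f_0}{2f_0}$, which are bounded below by $-\tfrac12\log 2$ uniformly, so that the empirical process is manageable without any ad hoc truncation. Your proposal omits this device entirely, and without it (or a carefully justified substitute) the chaining step does not close. Lemma~\ref{ist} does appear in the proof of Theorem~\ref{k1}, but there it is used \emph{after} the tail bound of the present theorem has been applied, to control the integral $\int_{10\log n}^\infty \P\{\kld(\hat f_n,f_0)\geq t\}\,dt$; it plays no role inside the empirical process bound itself.
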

In fact, \citet[][Corollary~7.5]{vandegeerbook} relies on a bracketing entropy upper bound in Hellinger distance for $\bar{\mathcal{F}}(f_0,\delta) := \bigl\{\frac{f+f_0}{2}: f \in \mathcal{F}, f \ll f_0, d_{\mathrm{H}}\bigl(\frac{f+f_0}{2}, f_0\bigr) \leq \delta \bigr\}$, where the restriction $f \ll f_0$ can be included because the support of $\hat{f}_n$ is contained in the support of $f_0$.  But for any non-negative functions $f_0$, $f_L$ and $f_U$ with $f_L \leq f_U$, we have
\[
\Bigl(\frac{f_U+f_0}{2}\Bigr)^{1/2} - \Bigl(\frac{f_L+f_0}{2}\Bigr)^{1/2} \leq \frac{1}{2^{1/2}}(f_U^{1/2} - f_L^{1/2}).
\]
Moreover, since the Hellinger distance is jointly convex in its arguments, if $d_{\mathrm{H}}\bigl(\frac{f+f_0}{2},f_0\bigr) \leq \delta$, then
\[
d_{\mathrm{H}}(f,f_0) \leq 2\biggl\{d_{\mathrm{H}}\biggl(f,\frac{f+f_0}{2}\biggr) + d_{\mathrm{H}}\biggl(\frac{f+f_0}{2},f_0\biggr)\biggr\} - d_{\mathrm{H}}(f,f_0) \leq 2\delta,
\]
so $H_{[]}\bigl(2^{1/2} \epsilon, \bar{\mathcal{F}}(f_0, \delta), \shel\bigr) \leq H_{[]}\bigl(2^{1/2} \epsilon, \F(f_0, 2\delta), \shel\bigr)$.

\end{document}